\title{Matrix-valued orthogonal polynomials related to hexagon tilings}
\author{Alan Groot
\footnote{Department of Mathematics, Katholieke Universiteit Leuven, Belgium, Email: alan.groot@kuleuven.be. 
	Supported by long term structural funding-Methusalem grant of the Flemish
	Government.}	 
\and  Arno B.J. Kuijlaars
\footnote{Department of Mathematics, Katholieke Universiteit Leuven, Belgium, Email: arno.kuijlaars@kuleuven.be. Supported by long term structural funding-Methusalem grant of the Flemish
	Government, and by FWO Flanders projects
	EOS 30889451, G.0864.16 and  G.0910.20.}	  
}
\newcommand{\C}{\mathbb{C}}
\newcommand{\R}{\mathbb{R}}
\newcommand{\Z}{\mathbb{Z}}
\newcommand{\ds}{\displaystyle}
\newcommand{\bigO}{O}
\renewcommand{\d}{\text{d}}
\renewcommand{\Re}{\operatorname{Re}}
\renewcommand{\Im}{\operatorname{Im}}
\newcommand{\RHP}{{\operatorname{RHP}}}
\newcommand{\LHP}{{\operatorname{LHP}}}
\newcommand{\Bal}{\operatorname{Bal}}
\newcommand{\supp}{\operatorname{supp}}
\newcommand{\zplus}{z_+}
\newcommand{\zminus}{z_-}
\newtheorem{theorem}{Theorem}
\newtheorem{proposition}[theorem]{Proposition}
\newtheorem{corollary}[theorem]{Corollary}
\newtheorem{lemma}[theorem]{Lemma}
\theoremstyle{definition}
\newtheorem{definition}[theorem]{Definition}
\newtheorem{rhp}{Riemann-Hilbert problem}
\theoremstyle{remark}
\numberwithin{theorem}{section}
\numberwithin{equation}{section}
\newcommand{\prob}{\mathbb{P}}
\begin{document}
	
	\maketitle

\begin{abstract}
In this paper, we study a class of matrix-valued orthogonal polynomials (MVOPs) 
that are related to $2$-periodic lozenge tilings of a hexagon.
The general model depends on many parameters. 
In the cases of constant and $2$-periodic parameter values we show that the 
MVOP can be expressed in terms of scalar polynomials
with non-Hermitian orthogonality on a closed contour in the 
complex plane. 

The $2$-periodic hexagon tiling model with a constant parameter 
has a phase transition in the large size limit. This is reflected
in the asymptotic behavior of the MVOP as the degree tends to infinity.
The connection with the scalar orthogonal polynomials allows us
to find the limiting behavior of the zeros of the determinant of 
the MVOP. The zeros tend to a curve $\widetilde{\Sigma}_0$ in
the complex plane that has a self-intersection.

The zeros of the individual entries of the MVOP show a different
behavior and we find the limiting zero distribution of
the upper right entry under a geometric condition on the curve
$\widetilde{\Sigma}_0$ that we were unable to prove, but that
is convincingly supported by numerical evidence. 
\end{abstract}

%\tableofcontents

\section{Introduction and reduction to scalar orthogonality} \label{sec:algebraic}

The aim of this paper is to study a class of matrix-valued orthogonal polynomials 
(or MVOPs for short) that are related to weighted lozenge 
tilings of a hexagon.  We explain the connection in more detail
in \cref{sec:tilings}. 
In this section we introduce the MVOPs and state our results
on the reduction to scalar orthogonality that are valid for finite 
degrees of the MVOPs. 
We state asymptotic results in \cref{sec:asymptotic}. 

\subsection{Existence and uniqueness of MVOP}
The matrix-valued orthogonality coming from the hexagon tiling
models takes the particular form
\begin{equation} \label{eq:MVOP1}
\frac{1}{2\pi i} \oint_{\gamma} P_k(z) W(z) z^j dz = 0_d,
	\qquad j=0,1, \ldots, k-1, \end{equation}
where $P_k(z) = z^k I_d + \cdots$ is a monic matrix-valued polynomial of degree $k$
and size $d \times d$, $W$ is a $d \times d$ weight matrix whose entries
are rational functions, and $\gamma$ is a closed contour in the complex plane 
that encircles the poles of $W$.
The integrand in \eqref{eq:MVOP1} is matrix-valued and
the integral is to be taken entrywise. 
We use $0_d$ to denote the zero matrix of size $d \times d$,
and $I_d$ for the identity matrix. The type of matrix-valued orthogonality
originates from the work \cite{duits2017periodic} where it was applied
to $2$-periodic tilings of the Aztec diamond. 

The development of the theory of MVOPs dates back to the 1940s, see the survey
 \cite{damanik} and the many references therein.
The more recent research on MVOPs (mostly from around the mid-1990s) deals 
with orthogonality on the real line with a non-negative definite weight matrix $W$, 
see  for example \cite{aptekarev_nikishin, duran_markovs_1996, duran_lopez-rodriguez-MOPs, duran_orthogonal_1995, sinap_orthogonal_1996}.    
In that case, the
existence and uniqueness of the MVOP are an easy consequence of a
(matrix-valued) Gram-Schmidt orthogonalization process.
In contrast, the orthogonality \eqref{eq:MVOP1} is not
associated with a matrix-valued positive definite scalar product and
existence and uniqueness of MVOPs is not a priori guaranteed. Indeed,
if $W$ is rational (as is the case in this paper), 
then any $P_k$ that cancels
the poles of $W$ (in the sense that $P_k W$ is entire) will
satisfy \eqref{eq:MVOP1} by Cauchy's theorem. Hence uniqueness
of MVOPs is certainly lost for $k$ large enough.
Observe also that the integrand in \eqref{eq:MVOP1} is analytic, except for poles of $W$, 
and therefore the contour of integration can be deformed as long 
as we do not cross any poles of $W$.

In the paper we focus on a special class of examples of size $2 \times 2$. We let
\begin{equation} \label{eq:weightWalpha} 
	W_{\alpha}(z) = \begin{pmatrix} \alpha & 1 \\ z & 1 \end{pmatrix},
	\qquad \alpha \in \mathbb R, 
\end{equation}
and
\begin{equation} \label{eq:weightW} 
	W(z) = W_{\vec{\alpha}, K,L} =  \frac{W_{\alpha_1}(z) \cdots W_{\alpha_L}(z)}{z^{K}},
\end{equation}
with non-negative integers $K$ and $L$, and $\vec{\alpha} = 
(\alpha_1, \ldots, \alpha_L) \in \mathbb R^L$. The contour $\gamma$
encircles the origin once in the positive direction.

\begin{proposition} \label{prop:exist}
	Let $K,L$ be non-negative integers and let $\alpha_j > 0$ 
	for every $j=1, \ldots, L$. Let $k$
	be a non-negative integer satisfying 
	\begin{equation} \label{eq:kcondition}  
	2K-L \leq 2k \leq 2K.  \end{equation}
	Let $\gamma$ be a closed contour in the complex plane going
	around $0$ once in the positive direction.
	 Then the monic matrix-valued polynomial $P_k$ of degree $k$, satisfying
	\eqref{eq:MVOP1} with weight matrix \eqref{eq:weightW} exists
	and is unique.
\end{proposition}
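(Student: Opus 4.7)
My plan is to recast \eqref{eq:MVOP1} as a square linear system and then prove its kernel is trivial under the stated hypotheses. Expand the unknown monic MVOP as $P_k(z) = z^k I_2 + \sum_{j=0}^{k-1} A_j z^j$ with $A_j \in \mathbb{C}^{2 \times 2}$; the $k$ matrix conditions in \eqref{eq:MVOP1} yield $4k$ scalar equations in the $4k$ scalar unknowns (entries of $A_0, \ldots, A_{k-1}$). The system is square, so existence and uniqueness reduce to the following kernel triviality: any $2 \times 2$ polynomial matrix $Q$ of degree at most $k-1$ with $\frac{1}{2\pi i}\oint_\gamma Q(z) W(z) z^j\, dz = 0_2$ for $j = 0, \ldots, k-1$ must satisfy $Q = 0$. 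Set $R(z) := W_{\alpha_1}(z) \cdots W_{\alpha_L}(z)$, so that $W = R/z^K$ with $R$ a $2 \times 2$ polynomial matrix; a residue computation at $z = 0$ recasts the vanishing of these $k$ integrals as the condition that the polynomial matrix $Q(z) R(z)$ has a ``gap'' of zero coefficients at $z^{K-k}, z^{K-k+1}, \ldots, z^{K-1}$. The hypothesis $2k \leq 2K$ gives $K - k \geq 0$, placing the gap in the nonnegative-degree range, while $2k \geq 2K - L$ ensures that the gap indices lie within the range of potentially nonzero coefficients of $QR$ (whose largest entry-wise degree is $(k-1) + \lceil L/2 \rceil$).

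The matrix orthogonality decouples across the rows of $Q$, since row $i$ of the matrix equation depends only on row $i$ of $Q$. It therefore suffices to show that no nonzero row vector $q^T = (q_1, q_2)$ of polynomials of degree $\leq k-1$ makes $q^T R$ exhibit the gap. The plan is to exploit two structural features of $R$: first, the hypothesis $\alpha_j > 0$ makes $R(0) = \prod_j W_{\alpha_j}(0)$ upper triangular with diagonal $\bigl(\prod_j \alpha_j, 1\bigr)$ and hence invertible, so a Taylor expansion at $z = 0$ transfers the vanishing of the low-order coefficients of $q^T R$ into the vanishing of derivatives of $q^T$ at $0$; second, $\det R(z) = \prod_j (\alpha_j - z)$ is a polynomial of exact degree $L$, which pins down $R^{-1}$ and gives sharp degree control at infinity through the identity $q^T = (q^T R)\, R^{-1}$. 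Writing $q^T R = \tilde{A}(z) + z^K \tilde{B}(z)$ with $\deg \tilde{A} \leq K - k - 1$, one then matches the low and high parts against the constrained degree bounds of $q^T$ to force $q^T = 0$ throughout the range $2K - L \leq 2k \leq 2K$.

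The main obstacle will be the last matching step, equivalently, proving non-singularity of the $2k \times 2k$ block Hankel system built from the coefficients of $R$ at the gap indices. A cleaner route, likely the one the paper pursues in its later sections, is to invoke the reduction of the matrix orthogonality \eqref{eq:MVOP1} to a scalar non-Hermitian orthogonality on a closed contour (as advertised in the abstract), so that existence and uniqueness of the corresponding scalar polynomial transfer back directly to those of the MVOP. An alternative hands-on route would be induction on $L$ that peels off one factor $W_{\alpha_L}$ at a time, using invertibility of $W_{\alpha_L}(0)$ (guaranteed by $\alpha_L > 0$) together with a careful shift of the parameters $K$ and $k$ to propagate kernel triviality through the induction.
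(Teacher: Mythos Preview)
Your proposal correctly reduces the problem to showing that a certain square linear system has trivial kernel, but it does not actually prove this, and the step you flag as ``the main obstacle'' is in fact the entire content of the proposition. Degree bookkeeping and invertibility of $R(0)$ alone will not force the block Hankel system to be nonsingular: these use only that each $\alpha_j \neq 0$, whereas the statement genuinely requires $\alpha_j > 0$. (With complex or negative parameters one can arrange cancellations that make the Hankel determinant vanish.) Your suggested alternative via the scalar-orthogonality reduction is not available either: that reduction is only established for constant or $2$-periodic parameters, not for arbitrary $\alpha_1,\ldots,\alpha_L$, and in the paper's logic it \emph{uses} this proposition rather than proving it. The induction-on-$L$ idea is too vague to evaluate; peeling off a factor shifts the gap window in a way that does not obviously stay within the inductive hypothesis.

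The paper takes an entirely different route. It interprets the weight $W_{\vec{\alpha},K,L}$ as the product of symbols of transition matrices for a $2$-periodic non-intersecting path model on an $ABC$-hexagon with $A=2k$, $C=2(K-k)$, $B=L-C$; the inequalities \eqref{eq:kcondition} are exactly what make these side lengths nonnegative. Existence and uniqueness of the monic MVOP then follow from \cite[Lemma~4.8]{duits2017periodic}, whose proof ultimately rests on the positivity of the partition function of the tiling model (guaranteed by $\alpha_j>0$ via the Lindstr\"om--Gessel--Viennot lemma). That positivity is precisely the nonsingularity of your block Hankel matrix, so the combinatorial/probabilistic input is doing the work your linear-algebra outline leaves undone.
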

The proof is based on the connection with a weighted  lozenge
tiling model of an $ABC$-hexagon with side lengths
$A = 2k$, $B= L-2K+2k$, and $C = 2K-2k$. The inequalities 
\eqref{eq:kcondition} imply that the side lengths are non-negative. 
We explain this connection in \cref{sec:tilings}.
Once we know that the tiling model exists, we can apply
\cite[Lemma 4.8]{duits2017periodic} to obtain existence
and uniqueness of $P_k$, see \cref{subsec:tilings4}.

\subsection{Scalar orthogonality}

In several cases we can express the MVOP $P_k$ in terms of 
meromorphic functions with an orthogonality on a Riemann surface. 
In case the Riemann surface has genus $0$ we can map it to the Riemann 
sphere, and we obtain orthogonal rational functions on $\mathbb C$, 
that may  lead to scalar orthogonal polynomials, depending on the situation.
 
The reduction to scalar orthogonality on a Riemann surface is not new.
It is implicit in \cite{duits2017periodic}, and made 
explicit by Charlier \cite{charlier2020matrix} where the
emphasis is on the Christoffel-Darboux kernel associated with
matrix-valued orthogonality that
originates with \cite{delvaux_average}.
Here we focus on the polynomials
in two particular cases associated with the weights \eqref{eq:weightW}
that give rise to genus $0$ Riemann surfaces and orthogonality
in the complex plane.   
We also give direct proofs, while \cite{charlier2020matrix} relies
on the Riemann-Hilbert problem for MVOPs, that was first considered in \cite{cascon_manas_2012,  grunbaum_iglesia_MF_2011}, see also \cite{delvaux_average}.
The first case is where $\alpha_j=\alpha >0$ for all $j$. 
\begin{theorem} \label{thm:theorem1}
	Let $K,L, k$, be non-negative integers
	such that \eqref{eq:kcondition}  is satisfied.
	Suppose $\alpha_j = \alpha > 0$ for every $j=1,\ldots, L$.
	Let $\gamma$ be a closed contour going
	around $0$ once in the positive direction.
	Let $P_k$ be the MVOP of degree $k$ with weight
	function
	\begin{equation} \label{eq:matrixW1} \frac{W_{\alpha}(z)^L}{z^K} 
	\end{equation}
	on $\gamma$ that uniquely
	exists by Proposition \ref{prop:exist}.
	Let $\beta = \frac{1-\alpha}{2}$. 
	
	\begin{enumerate}
		\item[\rm (a)] 
	Then
	\begin{equation} \label{eq:Pkformula1} 
		P_k(\zeta^2 - \beta^2) \begin{pmatrix}
		1 & 1 \\
		\zeta + \beta & -\zeta + \beta 	\end{pmatrix}
		= \begin{pmatrix} q_{2k}(\zeta) & q_{2k}(-\zeta)
		\\ q_{2k+1}(\zeta) & q_{2k+1}(-\zeta) 
		\end{pmatrix} 
			 \end{equation}
	where $q_{2k}$ and $q_{2k+1}$ are monic polynomials
	of degrees $2k$ and $2k+1$ respectively, satisfying 
	\begin{equation} \label{eq:scalarOP1} 
		\frac{1}{2\pi i} 
		\oint_{\gamma_{R}} q_{2k+\epsilon}(\zeta)
		\zeta^j 	\frac{(\zeta + \alpha + \beta)^L}{(\zeta^2-\beta^2)^K} d \zeta = 0, \qquad
		j=0,1, \ldots, 2k-1, \quad
		\epsilon = 0, 1, \end{equation}
	where $\gamma_R$ is the circle of radius $R > |\beta|$
	around $0$.
	\item[\rm (b)]	
	Conversely, if $q_{2k}$ and $q_{2k+1}$ are monic
	polynomials of the indicated degrees satisfying \eqref{eq:scalarOP1}, then there is a constant 
	\begin{equation} \label{eq:constantc} 
		c = \beta - \frac{1}{2\pi i}
		\oint_{\gamma} \frac{q_{2k+1}(\zeta)}{\zeta^{2k+1}} d\zeta
		\end{equation}
	such that,  with $\zeta = (z + \beta^2)^{1/2}$
	the principal branch of the square root (i.e., $\Re \zeta > 0$), we have
	\begin{align} \nonumber 
		P_k(z) & = \begin{pmatrix} 1 & 0 \\ c & 1 \end{pmatrix}
			\begin{pmatrix} q_{2k}(\zeta) & q_{2k}(-\zeta) \\
			q_{2k+1}(\zeta) & q_{2k+1}(-\zeta)
			\end{pmatrix} 
				\begin{pmatrix} 1 & 1 \\ \zeta + \beta & 
				-\zeta + \beta \end{pmatrix}^{-1} \\
			& =  \begin{pmatrix} 1 & 0 \\ c & 1 \end{pmatrix}
			\begin{pmatrix} 
			\frac{q_{2k}(\zeta) + q_{2k}(-\zeta)}{2}
				& \frac{q_{2k}(\zeta) - q_{2k}(-\zeta)}{2 \zeta}
			\\
			\frac{q_{2k+1}(\zeta) + q_{2k+1}(-\zeta)}{2}
			& \frac{q_{2k+1}(\zeta) - q_{2k+1}(-\zeta)}{2 \zeta}
			\end{pmatrix}
			\begin{pmatrix} 1 & 0 \\ -\beta & 1
			\end{pmatrix}.	\label{eq:scalarOP2}
	\end{align}  
	\end{enumerate} 
\end{theorem}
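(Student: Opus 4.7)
The first step is to diagonalize the weight. The characteristic polynomial of $W_\alpha(z)$ has roots $\alpha+\beta\pm\zeta$ with $\zeta=(z+\beta^2)^{1/2}$, and the associated eigenvector matrix is exactly the $V(\zeta)=\bigl(\begin{smallmatrix}1&1\\\zeta+\beta&-\zeta+\beta\end{smallmatrix}\bigr)$ appearing in \eqref{eq:Pkformula1}. Hence $W(z)V(\zeta)=V(\zeta)\Lambda(\zeta)$ with
$\Lambda(\zeta)=z^{-K}\diag\bigl((\zeta+\alpha+\beta)^L,(-\zeta+\alpha+\beta)^L\bigr)$
and $V(\zeta)^{-1}=\frac{1}{2\zeta}\bigl(\begin{smallmatrix}\zeta-\beta&1\\\zeta+\beta&-1\end{smallmatrix}\bigr)$.
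For part (a), I expand $P_k(\zeta^2-\beta^2)V(\zeta)$ entry-by-entry. Since $P_k$ is monic of degree $k$, the first column consists of monic polynomials in $\zeta$ of degrees $2k$ and $2k+1$; these define $q_{2k}$ and $q_{2k+1}$, and the second column is the same expressions with $\zeta\to-\zeta$. This verifies \eqref{eq:Pkformula1}.

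To derive the scalar orthogonality I substitute $W=V\Lambda V^{-1}$, so that $P_k W=Q\Lambda V^{-1}$ with $Q=P_k V$. A short computation gives
\[
(P_k W)_{i,2}=\frac{\chi_i(\zeta)-\chi_i(-\zeta)}{2\zeta\, z^K},\qquad
(P_k W)_{i,1}=\frac{\chi_i(\zeta)(\zeta-\beta)+\chi_i(-\zeta)(\zeta+\beta)}{2\zeta\, z^K},
\]
where $\chi_i(\zeta):=q_{2k+i-1}(\zeta)(\zeta+\alpha+\beta)^L$. Both numerators are odd in $\zeta$, so each entry is a single-valued rational function of $z$, as it must be. Next I use the substitution identity
$\oint_\gamma g(z)\,dz=\oint_{\gamma_R} g(\zeta^2-\beta^2)\,\zeta\,d\zeta$,
valid for single-valued meromorphic $g$ whose poles lie inside $\gamma$ and $R>|\beta|$, together with the elementary identity $\oint_{\gamma_R}h(-\zeta)\,d\zeta=-\oint_{\gamma_R}h(\zeta)\,d\zeta$. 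This reduces the MVOP orthogonality at order $z^j$, $0\le j\le k-1$, to
\[
\oint_{\gamma_R}\frac{\chi_i(\zeta)}{(\zeta^2-\beta^2)^K}(\zeta^2-\beta^2)^j\,d\zeta=0,\qquad
\oint_{\gamma_R}\frac{\chi_i(\zeta)(\zeta-\beta)}{(\zeta^2-\beta^2)^K}(\zeta^2-\beta^2)^j\,d\zeta=0,
\]
for $i=1,2$. Since $\{(\zeta^2-\beta^2)^j,\,(\zeta-\beta)(\zeta^2-\beta^2)^j\}_{j=0}^{k-1}$ spans all polynomials of degree at most $2k-1$ in $\zeta$, these conditions are equivalent to \eqref{eq:scalarOP1}.

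For part (b), I define $P_k(z)$ by \eqref{eq:scalarOP2} from given $q_{2k},q_{2k+1}$. Each entry of the central matrix is either the even part of a $q$ in $\zeta$ or an odd part divided by $\zeta$, and so is a polynomial in $\zeta^2=z+\beta^2$ and hence in $z$; the triangular factors on either side preserve this property. Tracking leading coefficients shows that the only obstruction to $P_k$ being monic of degree $k$ is an unwanted $z^k$ term in the $(2,1)$-entry, with coefficient equal to that of $\zeta^{2k}$ in $q_{2k+1}(\zeta)$, i.e.\ $\frac{1}{2\pi i}\oint q_{2k+1}(\zeta)\zeta^{-2k-1}\,d\zeta$; the choice of $c$ in \eqref{eq:constantc} cancels exactly this term. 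The MVOP orthogonality then follows from \eqref{eq:scalarOP1} by reading the computation of part (a) backwards. The main technical point throughout is the parity bookkeeping under $\zeta\mapsto-\zeta$: although $\zeta$ and $V(\zeta)$ are multivalued functions of $z$, every integrand that arises after antisymmetrization is a genuine rational function of $z$, so the change of variables $z=\zeta^2-\beta^2$ may be applied without any monodromy obstruction.
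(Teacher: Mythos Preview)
Your proof is correct and follows essentially the same strategy as the paper: diagonalize $W_\alpha$ via the eigenvector matrix $V(\zeta)=E(\zeta)$, read off $q_{2k},q_{2k+1}$ from the columns of $P_kV$, substitute the diagonalization into the matrix orthogonality, change variables $z=\zeta^2-\beta^2$, and observe that the resulting family $\{(\zeta^2-\beta^2)^j,\ (\zeta-\beta)(\zeta^2-\beta^2)^j\}_{j=0}^{k-1}$ (in the paper, $\{(\zeta^2-\beta^2)^j,\ \zeta(\zeta^2-\beta^2)^j\}$, which differs from yours only by a triangular recombination) spans all polynomials of degree at most $2k-1$. The only cosmetic difference is that the paper lifts $\gamma$ to the semicircle $\gamma_R^+$ under the principal branch of $\zeta$ and then symmetrizes to the full circle, whereas you use the $2$-to-$1$ cover $\zeta\mapsto\zeta^2-\beta^2$ from the full circle $\gamma_R$ directly together with $\oint_{\gamma_R}h(-\zeta)\,d\zeta=-\oint_{\gamma_R}h(\zeta)\,d\zeta$; these are equivalent bookkeeping devices. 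Part~(b) likewise matches the paper's argument.
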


Note that the polynomials appearing in the middle matrix of
the right-hand side of \eqref{eq:scalarOP2} are even  in $\zeta$
and therefore they are indeed polynomial in the 
variable $z = \zeta^2 - \beta^2$. The diagonal
entries are monic of degree $k$ in $z$, the $(1,2)$-entry has degree $\leq k-1$ in $z$, and the $(2,1)$-entry is
a polynomial of degree $\leq k$ whose leading
coefficient is equal to the coefficient of $\zeta^{2k}$
in the polynomial $q_{2k+1}$. The choice of $c$
in \eqref{eq:constantc} then guarantees that 
the $(2,1)$-entry of the product \eqref{eq:scalarOP2}
has degree $\leq k-1$. As a result the right-hand side 
of \eqref{eq:scalarOP2} is a monic matrix valued polynomial of 
degree $k$ in $z$.

The identities \eqref{eq:scalarOP1} are scalar orthogonality
properties of the polynomials $q_{2k}$ and $q_{2k+1}$.  
They are both orthogonal to polynomials of degree $\leq 2k-1$
with respect to the rational weight
\begin{equation} \label{eq:scalarwKL} 
w_{\alpha, K,L}(\zeta) =  \frac{(\zeta+ \alpha + \beta)^L}{(\zeta^2-\beta^2)^K},
\qquad \beta = \frac{1-\alpha}{2}, 
\end{equation}
on $\gamma_R$. More precisely, \cref{thm:theorem1} 
has the following immediate corollary. 

\begin{corollary} 
	\begin{enumerate}
		\item[\rm (a)] 
	The monic polynomial $q_{2k}$ of degree $2k$
	satisfying \eqref{eq:scalarOP1} with $\epsilon =0$ 
	exists uniquely, and it is the unique scalar 
	orthogonal polynomial of degree $2k$ with respect
	to the weight \eqref{eq:scalarwKL} on $\gamma_R$.
	\item[\rm (b)] 
	The monic polynomial $q_{2k+1}$ of degree $2k+1$
	satisfying \eqref{eq:scalarOP1} with $\epsilon = 1$
	exists, but it is not unique.
	If $q_{2k+1}$ satisfies \eqref{eq:scalarOP1} then
	so does $q_{2k+1} + c q_{2k}$ for any $c$, and this
	is the only freedom we have.
	\item[\rm (c)] If
	\[ \frac{1}{2\pi i} \oint_{\gamma_R}
		q_{2k}(\zeta) \zeta^{2k}
			\frac{(\zeta + \alpha +\beta)^L}{(\zeta^2-\beta^2)^K} d \zeta \neq 0, \]
	then the constant $c$ can be chosen such that
	the polynomial of degree $2k+1$ satisfies
	\eqref{eq:scalarOP1} with $j=2k$ as well,
	and then the  degree $2k+1$ scalar orthogonal 
	polynomial with weight \eqref{eq:scalarwKL} on $\gamma_R$ uniquely exists.
\end{enumerate}
\end{corollary}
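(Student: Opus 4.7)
The plan is to derive all three parts as direct consequences of the two-way correspondence established in \cref{thm:theorem1} between the unique MVOP $P_k$ (whose existence and uniqueness are granted by \cref{prop:exist}) and pairs $(q_{2k}, q_{2k+1})$ of monic scalar polynomials of the indicated degrees satisfying \eqref{eq:scalarOP1}. The structural fact I intend to exploit throughout is that the prefactor $\begin{pmatrix} 1 & 0 \\ c & 1 \end{pmatrix}$ appearing in \eqref{eq:scalarOP2} is lower unipotent, so that the first row of the middle matrix in \eqref{eq:scalarOP2} is determined by $P_k$ alone, while the entire freedom in the construction lives in the second row.

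For part (a), existence of $q_{2k}$ is immediate from \cref{thm:theorem1}(a) applied to the unique $P_k$ of \cref{prop:exist}. For uniqueness, I would take two candidates $q_{2k}^{(1)}$ and $q_{2k}^{(2)}$, pair each with the $q_{2k+1}$ produced by \cref{thm:theorem1}(a) from $P_k$, and feed both pairs into \cref{thm:theorem1}(b) to obtain the same MVOP. Equating the first rows of the two resulting identities $\begin{pmatrix} 1 & 0 \\ c_i & 1 \end{pmatrix} B^{(i)}(\zeta) = P_k(\zeta^2 - \beta^2)\begin{pmatrix} 1 & 1 \\ \zeta + \beta & -\zeta + \beta \end{pmatrix}$, where $B^{(i)}$ carries $q_{2k}^{(i)}(\pm\zeta)$ in its first row, yields $q_{2k}^{(1)} = q_{2k}^{(2)}$, since the lower unipotent prefactor does not affect the first row.

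For part (b), the forward direction is immediate: adding a scalar multiple of $q_{2k}$ to $q_{2k+1}$ does not change the leading coefficient, and the orthogonality conditions in \eqref{eq:scalarOP1} share the same range $j = 0, \ldots, 2k-1$ for $\epsilon = 0$ and $\epsilon = 1$. For the converse, I would pair two candidates $q_{2k+1}^{(1)}$ and $q_{2k+1}^{(2)}$ with the (now unique) $q_{2k}$ from part (a), apply \cref{thm:theorem1}(b) to both, and compare the second rows of the resulting identities. This yields $q_{2k+1}^{(1)} - q_{2k+1}^{(2)} = (c_2 - c_1)\, q_{2k}$, which is precisely the claimed freedom.

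For part (c), under the non-vanishing hypothesis the equation $\oint_{\gamma_R}(q_{2k+1} + c\, q_{2k})\,\zeta^{2k}\, w_{\alpha,K,L}(\zeta)\, d\zeta = 0$ determines $c$ uniquely, and $q_{2k+1} + c\, q_{2k}$ then satisfies the full orthogonality through $j = 2k$. For uniqueness, any two monic degree-$(2k+1)$ polynomials orthogonal against $\zeta^0, \ldots, \zeta^{2k}$ satisfy \eqref{eq:scalarOP1} in particular, so by part (b) they differ by a multiple of $q_{2k}$, and the same non-vanishing hypothesis forces that multiple to vanish. I do not foresee any serious obstacle: the entire corollary reduces to bookkeeping with the lower unipotent prefactor in \eqref{eq:scalarOP2}, and the only mildly delicate point is verifying that the constants $c_i$ arising from \eqref{eq:constantc} are consistent with the shifts of second-row coefficients used in the comparison arguments.
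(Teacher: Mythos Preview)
Your proposal is correct and is precisely the intended elaboration: the paper does not write out a proof for this corollary, merely stating that it is an immediate consequence of \cref{thm:theorem1}, and your argument supplies exactly those details by exploiting the two-way correspondence together with the uniqueness of $P_k$ from \cref{prop:exist}. The key observation you isolate---that the lower unipotent prefactor in \eqref{eq:scalarOP2} leaves the first row untouched and shifts the second row by a multiple of the first---is exactly what makes parts (a) and (b) drop out, and part (c) is then straightforward linear algebra.
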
 

The scalar weight \eqref{eq:scalarwKL} is rational with a
zero at $-\alpha - \beta = -  \frac{1+\alpha}{2}$ of order $L$
and two poles at $\pm \beta = \pm \frac{1-\alpha}{2}$ of order $K$.
In case $\alpha = 1$, the two poles coincide and then the scalar orthogonal
polynomials can be expressed in terms of Jacobi polynomials. 
Another special (limiting) case is $\alpha = 0$, since then one
of the poles coincides with the zero and the weight reduces to a weight
with one zero of order $L-K$ (if $L > K$) and one pole of order $K$. Again
the scalar orthogonal polynomials can be expressed in terms of
Jacobi polynomials. 

A reduction to classical orthogonal polynomials may also appear
for MVOP on the real line as in
\cite[Theorem 5.1]{duran_grunbaum_2005}, where certain MVOPs
are expressed in terms of Hermite polynomials.

The proof of Theorem \ref{thm:theorem1} essentially relies on the spectral
decomposition of the matrix $W_{\alpha}$  from \eqref{eq:weightWalpha}.
We have
\begin{align} \label{eq:Walpha}
	W_{\alpha} & = E \Lambda E^{-1}, 
\end{align}
with
\begin{align}
	\label{eq:Lambda}
	\Lambda & = \begin{pmatrix} \lambda_1 & 0 \\ 0 & \lambda_2
	\end{pmatrix}, \qquad  
	\lambda_{1,2}(z) = \pm (z + \beta^2)^{1/2}  + \alpha + \beta, 	\\ \label{eq:Ealpha}
	E & = \begin{pmatrix} 1 & 1 \\ 
		\lambda_1-\alpha & \lambda_2 - \alpha \end{pmatrix}.
\end{align}
The eigenvalues \eqref{eq:Lambda} are the two branches of a meromorphic
function $\zeta +\alpha + \beta$ defined on the two-sheeted Riemann surface 
associated with the equation 
\[ \zeta^2 = z+\beta^2. \] 
Also the eigenvectors of $W_{\alpha}$, that are in the columns
of \eqref{eq:Ealpha}, are meromorphic on the Riemann surface
that has genus zero and thus can be mapped conformally to the
Riemann sphere.

\subsection{Periodic parameters $\alpha_1$, $\alpha_2$}

We have a similar result for the second case
where the parameters $\alpha_j$ alternate between $\alpha_1$ and $\alpha_2$.
In this case, there is a connection between the MVOP and $2 \times 2$-periodic tilings of the hexagon, see \cite{charlier2020doubly}.
Here we rely on the fact that the eigenvalues
of $W_{\alpha_1} W_{\alpha_2}$ are again 
meromorphic on a Riemann surface of genus $0$.
An extension to sequences $(\alpha_j)_j$ with higher periodicity
fails since then the eigenvalues live on a higher genus
Riemann surface. 

\begin{theorem} \label{thm:theorem2}
	Let $K,L, k$ be non-negative integers
	such that \eqref{eq:kcondition}  is satisfied.
	Suppose $L$ is even, and $\alpha_{2j-1} = \alpha_1 > 0$,
	$\alpha_{2j} = \alpha_2 > 0$ for every $j=1,\ldots, L/2$.
	Let $\gamma$ be a closed contour going around $0$ once in the positive direction.
	Let $P_k$ be the MVOP of degree $k$ with weight
	function
	\begin{equation} \label{eq:matrixW2} \frac{(W_{\alpha_1}(z) W_{\alpha_2}(z))^{L/2}}{z^K} 
	\end{equation}
	on $\gamma$ that uniquely exists by \cref{prop:exist}.
	Let 
	\begin{equation} \label{eq:beta2}
	\beta = \frac{1-\alpha_1 \alpha_2}{2 \sqrt{1+\alpha_1} 
		\sqrt{1+\alpha_2}}. \end{equation}
	\begin{enumerate}
		\item[\rm (a)] 
		Then
		\begin{equation} \label{eq:Pkformula2} 
		\begin{pmatrix} \sqrt{\frac{1+\alpha_2}{1+\alpha_1}} & 0 \\
		0 & 1 \end{pmatrix} 
		P_k(\zeta^2 - \beta^2) \begin{pmatrix}
		\sqrt{\frac{1+\alpha_1}{1+\alpha_2}} & 
		\sqrt{\frac{1+\alpha_1}{1+\alpha_2}} \\
		\zeta + \beta & -\zeta + \beta 	\end{pmatrix}
		= \begin{pmatrix} q_{2k}(\zeta) & q_{2k}(-\zeta)
		\\ q_{2k+1}(\zeta) & q_{2k+1}(-\zeta) 
		\end{pmatrix} 
		\end{equation}
		where $q_{2k}$ and $q_{2k+1}$ are monic polynomials
		of degrees $2k$ and $2k+1$ respectively, with the scalar orthogonality
		\begin{equation} \label{eq:scalarOP21} 
		\frac{1}{2\pi i} 
		\oint_{C_{R}} q_{2k+\epsilon}(\zeta)
		\zeta^j 	
		\frac{\left((\zeta - \eta_1)(\zeta-\eta_2)  \right)^{L/2}}{(\zeta^2-\beta^2)^K} d \zeta = 0, \qquad
		j=0,1, \ldots, 2k-1, \quad
		\epsilon = 0, 1, \end{equation}
		where
		\[ \eta_{j} =
		 	- \frac{1 + 2 \alpha_j + \alpha_1 \alpha_2}{2\sqrt{1+\alpha_1}
		 		\sqrt{1+\alpha_2}}, \qquad j=1,2,	\]
		and $\gamma_R$ is the circle of radius $R > |\beta|$
		around $0$.
		\item[\rm (b)]	
		Conversely, if $q_{2k}$ and $q_{2k+1}$ are monic
		polynomials of the indicated degrees, satisfying \eqref{eq:scalarOP1} then there is a constant 
		\begin{equation} \label{eq:constantc} 
		c = \beta - \frac{1}{2\pi i}
		\oint_{\gamma} \frac{q_{2k+1}(\zeta)}{\zeta^{2k+1}} d\zeta
		\end{equation}
		such that,  with $\zeta = (z + \beta^2)^{1/2}$
		the principal branch of the square root (i.e., $\Re \zeta > 0$), we have
		\begin{align} \nonumber 
		P_k(z) & = 
		\begin{pmatrix} \sqrt{\frac{1+\alpha_1}{1+\alpha_2}} & 0
		\\ 0 & 1 \end{pmatrix} \begin{pmatrix} 1 & 0 \\ c & 1 \end{pmatrix}
		\begin{pmatrix} q_{2k}(\zeta) & q_{2k}(-\zeta) \\
		q_{2k+1}(\zeta) & q_{2k+1}(-\zeta)
		\end{pmatrix} 
		\begin{pmatrix} \sqrt{\frac{1+\alpha_1}{1+\alpha_2}} & 
		\sqrt{\frac{1+\alpha_1}{1+\alpha_2}} \\ \zeta + \beta & 
		-\zeta + \beta \end{pmatrix}^{-1} \\
		& = 
		\begin{pmatrix} \sqrt{\frac{1+\alpha_1}{1+\alpha_2}} & 0
		\\ 0 & 1 \end{pmatrix} \begin{pmatrix} 1 & 0 \\ c & 1 \end{pmatrix}
		\begin{pmatrix} 
		\frac{q_{2k}(\zeta) + q_{2k}(-\zeta)}{2}
		& \frac{q_{2k}(\zeta) - q_{2k}(-\zeta)}{2 \zeta}
		\\
		\frac{q_{2k+1}(\zeta) + q_{2k+1}(-\zeta)}{2}
		& \frac{q_{2k+1}(\zeta) - q_{2k+1}(-\zeta)}{2 \zeta}
		\end{pmatrix}
		\begin{pmatrix} 1 & 0 \\ -\beta & 1
		\end{pmatrix}
		\begin{pmatrix} \sqrt{\frac{1+\alpha_2}{1+\alpha_1}} & 0 \\ 0 & 1 \end{pmatrix}.	\label{eq:scalarOP22}
		\end{align}  
	\end{enumerate} 
\end{theorem}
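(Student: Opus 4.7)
The proof follows the same strategy as the proof of \cref{thm:theorem1}, with $W_\alpha$ replaced by the product $W_{\alpha_1}W_{\alpha_2}$. The key algebraic fact to establish first is that this product is diagonalizable on a genus-zero Riemann surface. A direct computation gives
\[ \operatorname{tr}(W_{\alpha_1}W_{\alpha_2}) = 2z + 1 + \alpha_1\alpha_2, \qquad \det(W_{\alpha_1}W_{\alpha_2}) = (z-\alpha_1)(z-\alpha_2), \]
so the discriminant of the characteristic polynomial equals $4(1+\alpha_1)(1+\alpha_2)(z+\beta^2)$ with $\beta$ as in \eqref{eq:beta2}. Writing $\mu=\sqrt{(1+\alpha_1)(1+\alpha_2)}$ and introducing the uniformizing coordinate $\zeta = (z+\beta^2)^{1/2}$ on the Riemann surface $\zeta^2 = z+\beta^2$, the eigenvalues become
\[ \lambda_{1,2}(\zeta) = \zeta^2 - \beta^2 + \tfrac{1+\alpha_1\alpha_2}{2} \pm \mu\zeta, \]
and a short verification of the sum and product of roots shows $\lambda_1(\zeta) = (\zeta-\eta_1)(\zeta-\eta_2)$ and $\lambda_2(\zeta) = (\zeta+\eta_1)(\zeta+\eta_2)$ with the $\eta_j$ of the statement.

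Next I would determine the eigenvectors. Using the identity $\lambda_j - z - \alpha_1\alpha_2 = \mu(\beta\pm\zeta)$, one checks that choosing the first component equal to $\sqrt{(1+\alpha_1)/(1+\alpha_2)}$ forces the second component to be $\zeta+\beta$ (for $\lambda_1$) or $-\zeta+\beta$ (for $\lambda_2$). This produces exactly the eigenvector matrix appearing on the right in \eqref{eq:Pkformula2}, and the cosmetic left factor $\operatorname{diag}(\sqrt{(1+\alpha_2)/(1+\alpha_1)},1)$ is chosen so that the overall normalization becomes symmetric.

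For part (a), I would substitute the spectral decomposition $(W_{\alpha_1}W_{\alpha_2})^{L/2} = \mathcal{E}\,\Lambda^{L/2}\mathcal{E}^{-1}$ into the MVOP condition \eqref{eq:MVOP1} and multiply on the right by $\mathcal{E}(\zeta)$. This reveals that each column of the matrix on the right of \eqref{eq:Pkformula2} is, when viewed as a function on the Riemann surface, a meromorphic function with poles only at $\zeta=\infty$ (where $z\to\infty$ means $\zeta\to\infty$) and at the preimages of $0$ (controlled by $z^K$); the $\zeta\to-\zeta$ sheet interchange gives the second column. Polynomiality in $\zeta$ and the correct degrees $2k$ and $2k+1$ come from the monic form of $P_k$ together with the behavior of $\mathcal{E}$ at infinity, exactly as in \cref{thm:theorem1}. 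The scalar orthogonality \eqref{eq:scalarOP21} then emerges from pulling the contour integral back to $\gamma_R$ via $z=\zeta^2-\beta^2$, $dz = 2\zeta\,d\zeta$: the diagonal weight $\lambda_j^{L/2}/z^K = ((\zeta\mp\eta_1)(\zeta\mp\eta_2))^{L/2}/(\zeta^2-\beta^2)^K$ on each sheet combines with the $2\zeta$ from $dz$ and the fact that the test function $z^j$ is even in $\zeta$ to yield the stated orthogonality against $\zeta^j$ for $j=0,\ldots,2k-1$ for each of $q_{2k}, q_{2k+1}$.

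For part (b), I would reverse the computation: given $q_{2k}, q_{2k+1}$ satisfying \eqref{eq:scalarOP21}, define $P_k(z)$ by the formula \eqref{eq:scalarOP22}. The middle matrix entries are even in $\zeta$, hence polynomials in $z$, and the choice of $c$ via \eqref{eq:constantc} ensures that the $(2,1)$-entry has degree $\leq k-1$, so the right-hand side is genuinely a monic matrix polynomial of degree $k$. Then one runs the argument of part (a) in reverse to check that this $P_k$ satisfies the MVOP orthogonality, and uniqueness from \cref{prop:exist} closes the loop. The main obstacle I foresee is bookkeeping: tracking the $\sqrt{(1+\alpha_1)/(1+\alpha_2)}$ factors through the transformations and verifying that they combine to produce polynomial (rather than branched) entries in $P_k$, plus carefully justifying the reduction of the double-sheet contour integral to a single scalar integral over $\gamma_R$ with the correct combination of the two eigenvalue branches.
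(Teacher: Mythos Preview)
Your proposal is correct and follows exactly the approach the paper indicates: the paper itself does not give a detailed proof of \cref{thm:theorem2} but simply records the eigenvalues $\lambda_{1,2} = z + \tfrac{1+\alpha_1\alpha_2}{2} \pm \sqrt{(1+\alpha_1)(1+\alpha_2)}\,(z+\beta^2)^{1/2}$ of $W_{\alpha_1}W_{\alpha_2}$ and refers back to the proof of \cref{thm:theorem1}. Your outline fills in precisely those details (discriminant, factorization $\lambda_1(\zeta)=(\zeta-\eta_1)(\zeta-\eta_2)$, eigenvectors via $\lambda_j - z - \alpha_1\alpha_2 = \mu(\beta\pm\zeta)$, and the change of variables $z=\zeta^2-\beta^2$), so there is nothing to add.
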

It is an easy check to see that \cref{thm:theorem2}
reduces to  \cref{thm:theorem1} in case $\alpha_1 = \alpha_2$.

The proof of  \cref{thm:theorem2} is essentially the same as that
of \cref{thm:theorem1}. The crucial property is that $W_{\alpha_1}(z)W_{\alpha_2}(z)$ has the
eigenvalues
\[ \lambda_{1,2}	
		= \pm \sqrt{(1+\alpha_1)(1+\alpha_2)}
			\left(z+ \beta^2\right)^{1/2} + z + \frac{1+\alpha_1 \alpha_2}{2} \]
with $\beta$ given by \eqref{eq:beta2},	Thus the eigenvalues
and eigenvectors  live on the Riemann surface associated
with the equation $\zeta^2 = z+ \beta^2$, which again
has genus $0$. We do not give more details on the proof of
\cref{thm:theorem2}.

In \cite{charlier2020doubly}, Charlier studies a $2 \times 2$-periodically weighted tiling model of the hexagon leading to matrix-valued
orthogonality with weight matrix $W^{2N}(z)/z^{2N}$ where 
\[ W(z) = \begin{pmatrix} 1 & 1 \\ 
		\alpha z & 1 \end{pmatrix} \begin{pmatrix} \alpha^2 & \alpha \\
		z & 1 \end{pmatrix}, \qquad \alpha > 0, \]
which is similar to \eqref{eq:matrixW2}. 
The matrix-valued orthogonality can be reduced to scalar orthogonality
in this case as well, with a scalar weight
\[ \left( \frac{(\zeta- \alpha c) (\zeta-\alpha c^{-1})}
		{\zeta(\zeta-c)(\zeta-c^{-1})} \right)^{2N},
			\qquad c = \sqrt{\frac{\alpha}{1-\alpha + \alpha^2}} \]
on a contour going around $c$ and $c^{-1}$ but not around $0$. 
See \cite[Section 3]{charlier2020doubly} where this is discussed on the 
level of the reproducing kernels. 

Higher periodicity in the parameters will lead to
a higher genus Riemann surface. For example in the case
of $3$-periodicity with $\alpha_{3j+1}= \alpha_1$, $\alpha_{3j+2}=\alpha_2$,
$\alpha_{3j+3} = \alpha_3$ we find that the eigenvalues
of $W_{\alpha_1}(z) W_{\alpha_2}(z) W_{\alpha_3}(z)$ 
live on the genus $1$ Riemann surface associated with
\[ \zeta^2 = 4z^3 + a z^2 + b z + c \]
where $a,b,c$ are explicit in terms of the parameters $\alpha_j$,
$j=1,2,3$, 
\begin{align*}
	a & = (1+\alpha_1 + \alpha_2 + \alpha_3)^2 + 8, \\
	b & = 6 \alpha_1 \alpha_2 \alpha_3 + \alpha_1 \alpha_2 \alpha_3(\alpha_1 + \alpha_2 +\alpha_3)
		+ 4(\alpha_1 \alpha_2 + \alpha_1 \alpha_3+\alpha_2\alpha_3)
			+ 2(\alpha_1+\alpha_2 + \alpha_3) + 6, \\
	c & = (1-\alpha_1 \alpha_2 \alpha_3)^2.
\end{align*}

\section{Asymptotic results} \label{sec:asymptotic}

\subsection{Discussion}

For our asymptotic results we restrict to the special
case from Theorem \ref{thm:theorem1}, namely $\alpha_j = \alpha > 0$ 
for every $j$ with parameters $L= 4n$ and $K=2n$.
This choice of parameters corresponds to tilings of
a regular hexagon of size $2n \times 2n \times 2n$. We are motivated by
the work \cite{charlier2019periodic} where this tiling problem
was studied, but not with MVOPs. 
Instead it relied on monic scalar orthogonal 
polynomials $(p_{k,n})_k$  with orthogonality
\begin{equation} \label{eq:scalarpkn} 
	\frac{1}{2\pi i} \oint_{\gamma}
	p_{k,n}(z) z^j \frac{(z+1)^n (z+\alpha)^n}{z^{2n}} dz =
	0, \qquad j=0,1, \ldots, k-1. 
	\end{equation}
Note that we use a second subscript $n$ in order to emphasize
that the orthogonality weight is varying with $n$. 
		
As we saw in \cref{thm:theorem1} the point of
view of matrix orthogonality leads to monic scalar orthogonal
polynomials $q_k = q_{k,n}$ (if they exist) with
\begin{equation} \label{eq:scalarqkn} 
	\frac{1}{2\pi i} \oint_{\gamma_R}
	q_{k,n}(z) z^j \frac{(z+\alpha + \beta)^{2n}}{(z^2-\beta^2)^n}
	dz = 0, \qquad j=0,1, \ldots, k-1.
	\end{equation}
with $\beta = (1-\alpha)/2$ and $R > \beta$.
In the situation of \cref{thm:theorem1}(a),
$q_{2k}$ is equal to $q_{2k,2n}$, and $q_{2k+1}$
is equal to $q_{2k+1,2n} + c q_{2k,2n}$ for some constant $c$, 
provided that $q_{2k+1}$ has degree $2k+1$.

Both families of polynomials $p_{k,n}$ and $q_{k,n}$ can be used  for 
the analysis of the hexagon tiling model, but curiously enough it is not 
possible to map one family  directly to the other. However, we do see
similar behavior in the asymptotic behavior of $p_{n,n}$ and $q_{n,n}$ 
as $n \to \infty$.

The asymptotic behavior of $p_{n,n}$ is essentially done in 
\cite{charlier2019periodic}, but since the focus
there was on the hexagon tiling model, the results for the polynomials were not 
stated  explicitly in \cite{charlier2019periodic}. Let us
summarize however what we have. First of all we may restrict to $0 < \alpha \leq 1$,
because there is a symmetry $\alpha \mapsto 1/\alpha$ in
the tiling model.
This is reflected in the orthogonality \eqref{eq:scalarpkn}.
Indeed, if we use $p_{k,n}(z; \alpha)$ to denote the dependence
on $\alpha$, then we obtain from a simple
rescaling $z \mapsto \alpha z$ in \eqref{eq:scalarpkn} that
\[ p_{k,n}(\alpha z; \alpha)  = \alpha^k p_{k,n}(z; 1/\alpha). \]
We similarly find from \eqref{eq:scalarqkn} that
\[ q_{k,n}(\alpha z; \alpha) = \alpha^k q_{k,n}(z; 1/\alpha). \]

Next, it was found in \cite{charlier2019periodic} that
there is a phase transition in the hexagon tiling model 
at the critical value $\alpha = 1/9$. In the large
$n$ limit there are frozen regions where the tiling
is fixed with very high probability, and liquid regions where
one sees all three types of tiles in a random fashion.
For $0 < \alpha <  \frac{1}{9}$ the liquid region consists
of two disjoint ellipses, which merge at the critical
value of $\alpha$. For $\frac{1}{9} < \alpha \leq 1$
the liquid region is simply connected. See \cref{fig:low alpha tiling} below
for a hexagon tiling with $\alpha < \frac{1}{9}$. 
The transition is reflected in the behavior of the
zeros of the polynomials $(p_{n,n})_n$ as $n \to \infty$.
For $\frac{1}{9} < \alpha < 1$ the zeros tend to 
the circular arc $|z| = \sqrt{\alpha}$, $|\arg z| < \theta_{\alpha}$, 
for a certain $\theta_{\alpha} < \pi$, while for $0 < \alpha < \frac{1}{9}$ they 
tend to a closed contour as $n \to \infty$.

\begin{figure}[t] \vspace*{-2cm}
	\centering
%\begin{overpic}[scale=0.5]{./images/zeros_degree_40_CDKL_with_trajectories_alpha_0p2.pdf}
\begin{overpic}[scale=0.5]{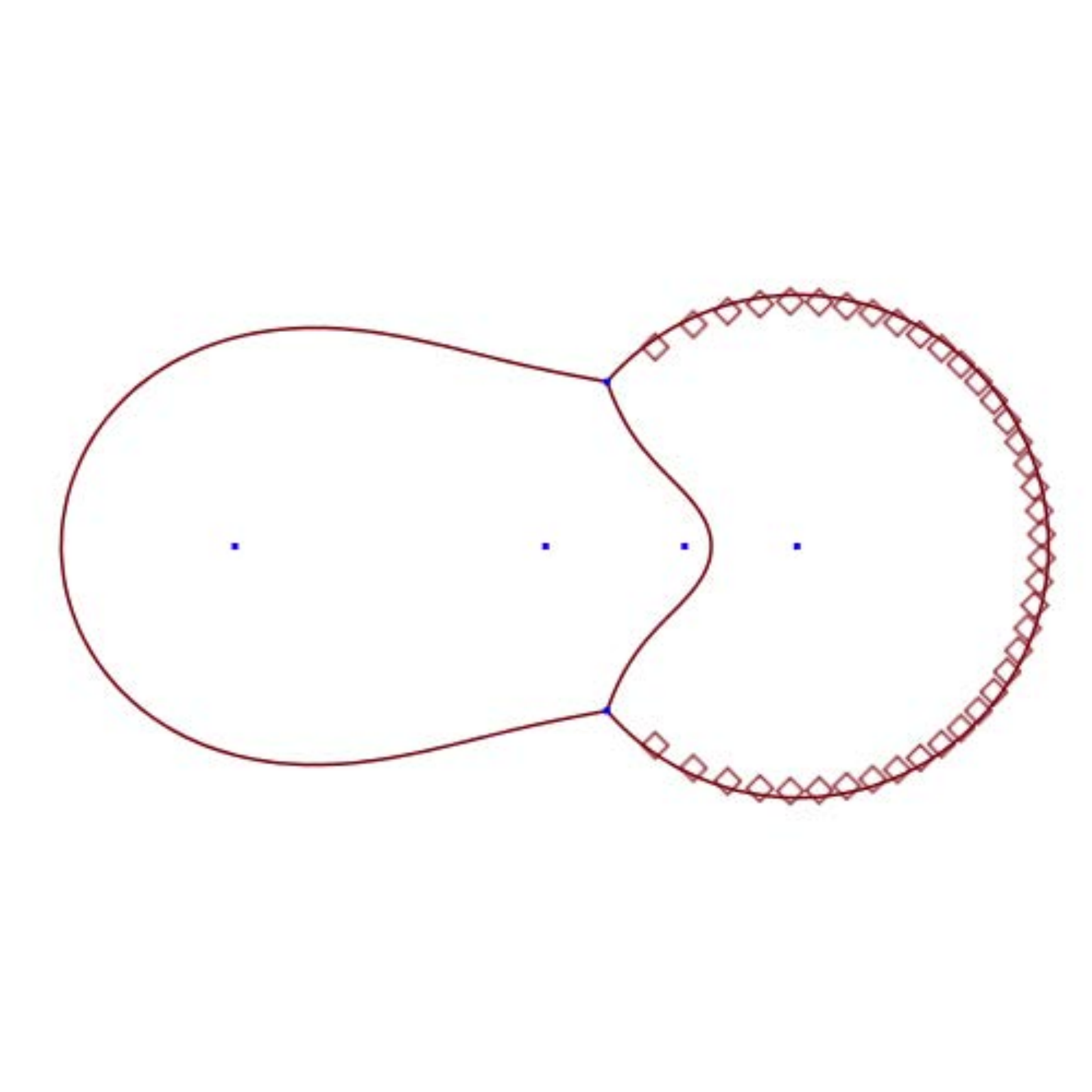}
	\put(18,46){$-1$}
	\put(44,46){$-\sqrt{\alpha}$}
	\put(58,46){$-\alpha$}
	\put(72,46){$0$}
	\put(52,67){$\widehat{z}_+$}
	\put(53,30){$\widehat{z}_-$}
\end{overpic} \vspace*{-2cm} 

\caption{\label{fig:zeros40CDKL} Zeros of the polynomial $p_{n,n}$
with $n=40$ and $\alpha=0.2$, together with the critical trajectories 
of $\widehat{Q}_{\alpha}(z) d z^2$. The zeros tend to one
of the critical trajectories as $n \to \infty$, namely to 
the circular arc connecting $\widehat{z}_-$ and $\widehat{z}_+$.}
\end{figure}

By \cite[Lemma 4.5]{charlier2019periodic}, the circular arc is a critical 
trajectory of the quadratic differential $\widehat{Q}_{\alpha}(z) dz^2$ where
\begin{equation} \label{eq:Qalphatilde} 
	\widehat{Q}_{\alpha}(z) =
	\frac{(z-\widehat{z}_+)(z- \widehat{z}_-) (z+\sqrt{\alpha})^2}{z^2(z+1)^2(z+\alpha)^2} \end{equation}
with
\[ \widehat{z}_{\pm} 
	= - \frac{3 - 2 \sqrt{\alpha} + 3 \alpha}{8}
		\pm \frac{3i(1+\sqrt{\alpha})}{8}
			\sqrt{\left(1- \tfrac{\sqrt{\alpha}}{3}\right)
				\left(3 \sqrt{\alpha} -1\right) }. \]
There are three critical trajectories connecting
the two simple zeros $\widehat{z}_{\pm}$ of the quadratic
differentials. One of these is the circular arc that
attracts the zeros of $p_{n,n}$ as $n \to \infty$
in the case $1/9 < \alpha \leq 1$.
A brief overview of basic properties of quadratic
differential is given in 
\cite[Section 4]{mf_rakhmanov_critical_2011}, see also 
\cite{pommerenke_univalent_1975} and \cite{strebel_quadratic_1984} for more extensive accounts.

\subsection{Zeros of $q_{n,n}$}
For the asymptotic results on the polynomials $q_{n,n}$ in this paper we
also restrict to $1/9 < \alpha \leq 1$.

The quadratic differential $Q_{\alpha}(z) dz^2$
that is relevant for the zeros of $q_{n,n}$ is given in
the next definition.
\begin{definition}\label{def:rational function Q}
	For $\frac19 < \alpha \leq 1$, we define the two points $z_\pm = z_\pm(\alpha)$ as
	\begin{align}\label{simpleZerosRationalFunctionQ}
	z_\pm = z_\pm(\alpha) = (1+\sqrt{\alpha}) \left( - \frac{1+\sqrt{\alpha}}{8} \pm \frac{3 i}{8} \sqrt{\left(1-\frac{\sqrt{\alpha}}{3}\right) 
		\left(3 \sqrt{\alpha} - 1\right)} \right) 
	\end{align}	
	and the rational function $Q = Q_\alpha$ as
	\begin{align}\label{rationalFunctionQ}
	Q_{\alpha}(z) = \frac{(z - \zplus) (z - \zminus) 
		\left(z + \frac{(1-\sqrt{\alpha})^2}{2}\right)^2 }{(z - \beta)^2 (z + \beta)^2 (z + \alpha + \beta)^2}
	\end{align}
	with $\beta = \frac{1-\alpha}{2}$ as before.
\end{definition}

A little calculation shows that
\begin{equation} \label{eq:QvsQtilde} 
	Q_{\alpha}(-z-\alpha-\beta) = \widehat{Q}_{\alpha}(z) 
	\end{equation}
with $\widehat{Q}_{\alpha}$ given by \eqref{eq:Qalphatilde},
and therefore the critical trajectories of $Q_{\alpha} dz^2$ are mapped 
to those of $\widehat{Q}_{\alpha} dz^2$ by the mapping $z \mapsto -z-\alpha - \beta$.
The latter ones where determined in \cite{charlier2019periodic}.
We then have the following.

\begin{figure}[t] \vspace*{-2cm}
	\centering
%	\begin{overpic}[scale=0.5]{./images/zeros_degree_40_GK_with_trajectories_alpha_0p2.pdf} 
	\begin{overpic}[scale=0.5]{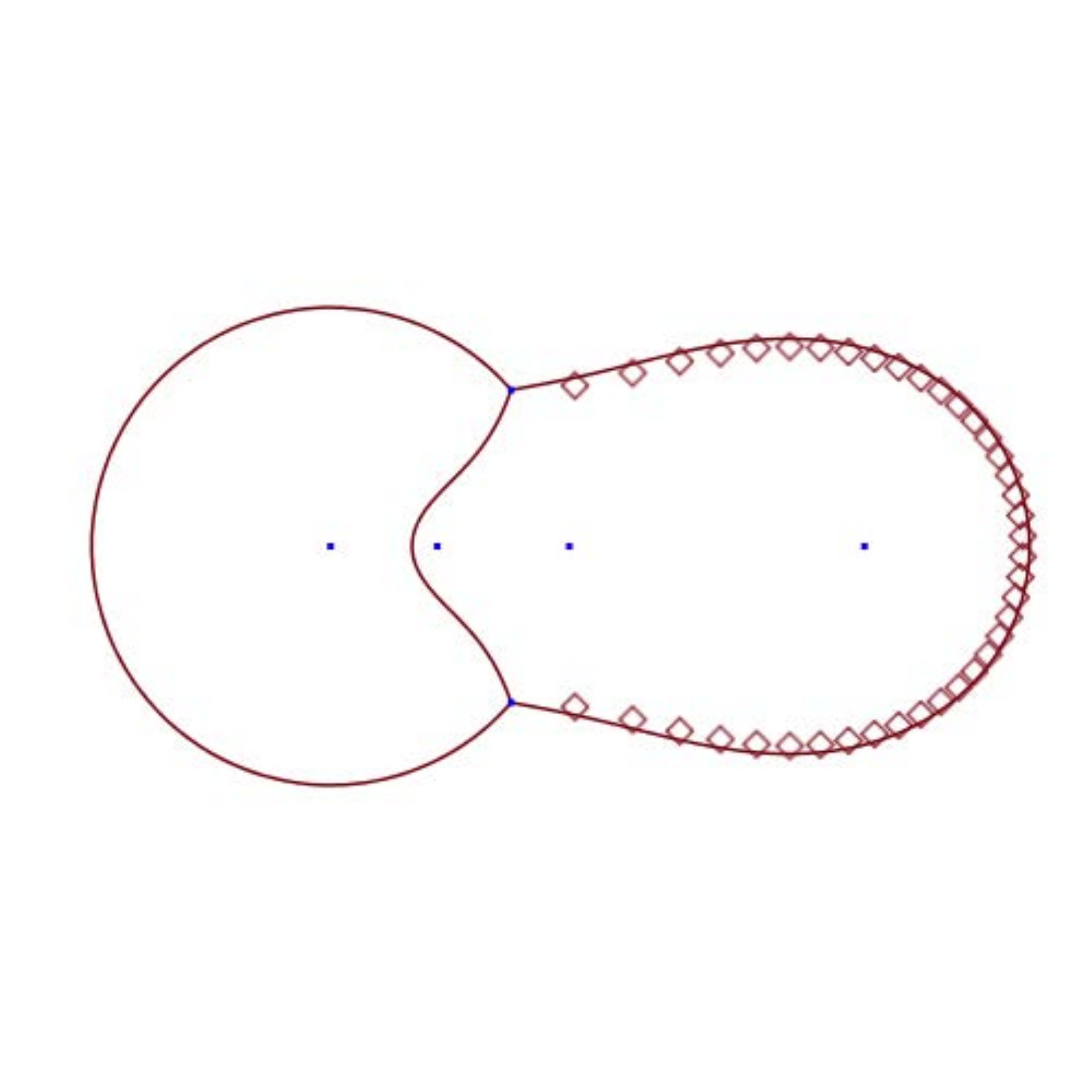} 
		\put(21,46){$-\alpha-\beta$}
		\put(37,46){$-\beta$}
		\put(46,45){$- \frac{(1-\sqrt{\alpha})^2}{2}$}
		\put(77,46){$\beta$}
		\put(45,66){$z_+$}
		\put(46,32){$z_-$} 
		\put(80,69){$\Sigma_0$}
		\put(38,58){$\Sigma_1$}
		\put(5,60){$\Sigma_2$}
		\end{overpic} \vspace*{-2cm}
	
	\caption{\label{fig:zeros40GK} Zeros of the polynomial $q_{n,n}$
		with $n=40$ and $\alpha=0.2$, together with the critical trajectories of $Q_{\alpha}(z) d z^2$. The zeros tend to $\Sigma_0$, which is one
		of the critical trajectories as $n \to \infty$, but
		it is \emph{not} the 
		circular arc $\Sigma_2$ connecting $z_-$ and $z_+$. The figure also shows
	the zeros and poles of the quadratic differential.}
\end{figure}

\begin{lemma} \label{lem:zerosqnn}
		Let $\frac{1}{9} < \alpha \leq 1$.
		\begin{enumerate}
	\item[\rm (a)] 
		There are three critical trajectories $\Sigma_0$, $\Sigma_1$,
		$\Sigma_2$ of $Q_{\alpha}(z) dz^2$
		that connect the simple zeros $z_{\pm}$ of $Q_{\alpha}$,
		as shown in  \cref{fig:zeros40GK}.
	\item[\rm (b)] 
	Each trajectory $\Sigma_j$ carries a probability measure $\mu_j$ 	given by
	\begin{align} \label{def:mu}
	d \mu_j(s) = \frac{1}{\pi i} Q_{\alpha}(s)^{1/2} d s, \quad s \in \Sigma_j, \quad j=0,1,2,
	\end{align}
	where $d s$ denotes the complex line element and the 
	appropriate square root is taken so that \eqref{def:mu}
	is a positive measure.
	\end{enumerate}
\end{lemma}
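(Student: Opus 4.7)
The lemma is essentially a transfer of the analogous statement for $\widehat Q_\alpha$, established in \cite{charlier2019periodic}, under the affine involution $\tau(z) = -z - \alpha - \beta$. My plan is to exploit the identity \eqref{eq:QvsQtilde} to pull back both the trajectory structure and the associated probability measures, so that no fundamentally new trajectory analysis is required.

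For part (a), I would first promote \eqref{eq:QvsQtilde} to an equality of quadratic differentials: since $d\tau = -dz$, we have $Q_\alpha(w)\,(dw)^2 = \widehat Q_\alpha(z)\,(dz)^2$ under $w = \tau(z)$, so the critical trajectories of $Q_\alpha\,dw^2$ are exactly the $\tau$-images of those of $\widehat Q_\alpha\,dz^2$. A direct computation shows that $\tau$ sends $\widehat z_\pm$ to $z_\mp$, matching the simple zeros of the two quadratic differentials. Since \cite[Lemma 4.5]{charlier2019periodic} already provides three critical trajectories of $\widehat Q_\alpha\,dz^2$ joining $\widehat z_\pm$---one of them being the circular arc $|z|=\sqrt{\alpha}$---pushing forward under $\tau$ produces the three curves $\Sigma_0,\Sigma_1,\Sigma_2$ depicted in \cref{fig:zeros40GK}.

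For part (b), I would observe that $Q_\alpha^{1/2}$ is a meromorphic function on the genus-zero Riemann surface $\zeta^2 = (z-z_+)(z-z_-)$, having simple poles at $z = \pm\beta$ and $z = -\alpha-\beta$. On each trajectory $\Sigma_j$, which connects the two branch points, the boundary values of $Q_\alpha^{1/2}$ from the two sides differ by a sign, and the standard convention for equilibrium measures supported on trajectories of a quadratic differential selects the branch making $\frac{1}{\pi i}Q_\alpha^{1/2}(s)\,ds$ a non-negative real density---this is the ``appropriate square root'' in the statement. For the normalization $\mu_j(\Sigma_j) = 1$, I would again transfer through $\tau$: the push-forward $\tau_* \widehat\mu_j$ of the probability measures attached to $\widehat Q_\alpha\,dz^2$ in \cite{charlier2019periodic} agrees with \eqref{def:mu} up to the branch choice. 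Alternatively, deforming $\Sigma_j$ to a small cycle around the subset of $\{\pm\beta,-\alpha-\beta\}$ that it encloses and summing the residues of $\frac{1}{\pi i}Q_\alpha^{1/2}$ at those poles would yield the value $1$ by explicit computation.

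\textbf{Main obstacle.} Because the heavy trajectory analysis on $\widehat Q_\alpha$ is already carried out in \cite{charlier2019periodic}, the remaining work is essentially bookkeeping: (i) making a globally consistent choice of branch of $Q_\alpha^{1/2}$ along each $\Sigma_j$ so that $\mu_j$ is a positive measure, and (ii) identifying which subset of the poles $\{\pm\beta,-\alpha-\beta\}$ is enclosed by each $\Sigma_j$, so that the residue sum evaluates to exactly $1$ rather than, say, the difference of two residues. Both points are tractable in view of the explicit geometry indicated in \cref{fig:zeros40GK}.
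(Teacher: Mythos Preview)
Your proposal is correct and follows essentially the same approach as the paper: part (a) is obtained by transferring the trajectory structure of $\widehat Q_\alpha\,dz^2$ from \cite{charlier2019periodic} through the affine map $z \mapsto -z-\alpha-\beta$ via \eqref{eq:QvsQtilde}, and part (b) is handled by the same transfer (the paper simply says the proof is analogous to \cite[Proposition~4.4]{charlier2019periodic}). Your residue-computation alternative for the normalization is a reasonable supplementary route, though not needed once the push-forward argument is in place.
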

\begin{proof} 
	Part (a) is immediate from \eqref{eq:QvsQtilde} and 
	the corresponding result in \cite[Lemma 4.6]{charlier2019periodic} 
	on the critical trajectories of $\widehat{Q}_{\alpha} dz^2$.
	The proof of part (b) is analogous to the proof
	of \cite[Proposition 4.4]{charlier2019periodic}.
\end{proof}

The asymptotic behavior of the zeros of the polynomials $(q_{n,n})_n$
follows from a strong asymptotic formula. The formula involves the $g$-function
\begin{align} \label{eq:gzdef}
g(z) & = \int \log(z-s) d\mu_0(s), 
\end{align} 
associated with the measure $\mu_0$, which 
is defined and analytic in $\mathbb C \setminus
(\Sigma_0 \cup (-\infty, x^*)])$ where $x^*$ denotes the point
of intersection of $\Sigma_0$ with the positive real axis,
as well as
the following functions that are determined by the endpoints
$z_{\pm}$, see  \eqref{simpleZerosRationalFunctionQ}, of $\Sigma_0$,
\begin{align} 
\label{eq:A0z}
A_0(z) & = \frac{1}{2}(a(z) + a(z)^{-1}), \\		
\label{eq:az}
a(z) & = \left( \frac{z-z_+}{z-z_-} \right)^{1/4}, \\
\label{eq:psiz}
\psi(z) & = \frac{1}{2} \left( z -  \frac{z_+-z_-}{2}
+ \left((z-z_+)(z-z_-)\right)^{1/2} \right).
\end{align}
The fractional
powers in \eqref{eq:az} and \eqref{eq:psiz} are defined
and analytic in $\mathbb C \setminus \Sigma_0$ while being
real and positive for large positive real $z$.

\begin{proposition} \label{prop:zerosqnn}	
	Let $\frac{1}{9} < \alpha \leq 1$. With the notation above we then have.
	\begin{enumerate}
	\item[\rm (a)] For each fixed $k \in \mathbb Z$, the
	polynomial  $q_{n+k,n}$ exists for $n$ large enough,
	and 
	\begin{equation} \label{eq:qnkasymptotics} 
		q_{n+k,n}(z) = A_0(z) \psi^k(z) e^{ng(z)}
	\left(1+ \bigO\left(\frac{1}{n(1+|z|)}\right)\right), 
	\quad z \in \mathbb C \setminus \Sigma_0, \end{equation}
	where the $\bigO$ is uniform for $z$ in compact
	subsets of $\overline{\mathbb C} \setminus \Sigma_0$. 
%	[We use $\overline{\mathbb C} = \mathbb C \cup \{\infty\}$ to denote the
%	extended complex plane.]

	\item[\rm (b)] 
	The zeros of $q_{n,n}$ tend to $\Sigma_0$
	as $n \to \infty$, and $\mu_0$ is the limit of
	the  normalized zero counting measures.
	\end{enumerate}
\end{proposition}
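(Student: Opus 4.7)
The plan is to establish (a) by the Deift--Zhou steepest descent analysis of the Fokas--Its--Kitaev Riemann--Hilbert problem encoding $q_{n+k,n}$; part (b) will then be a standard consequence. Let $Y = Y_{n,k}$ be the $2\times 2$ matrix analytic off $\gamma_R$, with jump $Y_+(z) = Y_-(z)\begin{pmatrix}1 & w_n(z) \\ 0 & 1\end{pmatrix}$, where $w_n(z) = (z+\alpha+\beta)^{2n}/(z^2-\beta^2)^n$, and normalization $Y(z) = (I + \bigO(z^{-1}))\diag(z^{n+k}, z^{-n-k})$ at infinity. The polynomial $q_{n+k,n}$ is recovered as $Y_{11}$ whenever $Y$ exists, and solvability of $Y$ for large $n$ will drop out of the asymptotic analysis.

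The first transformation uses the $g$-function of \eqref{eq:gzdef} together with $\psi(z)$ of \eqref{eq:psiz} to define
\[
T(z) = e^{-n\ell\sigma_3/2}\, Y(z)\, \psi(z)^{-k\sigma_3}\, e^{-(ng(z) - n\ell/2)\sigma_3},
\]
where $\sigma_3 = \diag(1,-1)$ and $\ell$ is a Robin-type constant. Since $g(z) = \log z + \bigO(z^{-1})$ and $\psi(z) = z + \bigO(1)$, the factor $\psi^{-k\sigma_3}$ absorbs the mismatch between the degree $n+k$ and the $n$-scaling of the weight, and ensures $T(z) \to I$ at infinity. The choice of $\Sigma_0$ as the support of $\mu_0$ is encoded in the identity $g_+(z) + g_-(z) = V(z) + \ell$ on $\Sigma_0$, with external field $V(z) = \log(z^2 - \beta^2) - 2\log(z + \alpha + \beta)$ read off from $w_n$, which is consistent with the explicit form \eqref{rationalFunctionQ} of $Q_\alpha$. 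The second transformation opens lenses around $\Sigma_0$ using the standard factorization of the oscillatory jump; exponential off-lens decay follows from the strict variational inequality for $g$ outside $\Sigma_0$.

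The global parametrix $N(z)$ on $\mathbb C \setminus \Sigma_0$ is built from $a(z)$ of \eqref{eq:az}, so that $A_0 = (a + a^{-1})/2$ from \eqref{eq:A0z} arises naturally as the $(1,1)$-entry of $N$; local parametrices at the endpoints $z_\pm$ are constructed from Airy functions, matching $N$ on the boundary of small disks. A small-norm argument then gives a multiplicative error $I + \bigO(1/(n(1+|z|)))$ uniformly on compact subsets of $\overline{\mathbb C}\setminus \Sigma_0$, and unwinding the transformations yields \eqref{eq:qnkasymptotics} together with the existence of $q_{n+k,n}$ for large $n$. Part (b) is then immediate: the pointwise convergence $\tfrac{1}{n}\log|q_{n,n}(z)| \to \operatorname{Re} g(z) = \int \log|z-s|\, d\mu_0(s)$ off $\Sigma_0$ implies, by a standard logarithmic-potential argument, that the normalized zero counting measures of $q_{n,n}$ converge weakly to $\mu_0$.

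The main obstacle is the $S$-property: justifying that $\Sigma_0$, rather than the competing critical trajectories $\Sigma_1$ or $\Sigma_2$, is the correct support of the equilibrium problem. Concretely, one needs to verify both that the contour $\gamma_R$ can be deformed into a lens around $\Sigma_0$ within the analyticity domain of $w_n$ (avoiding the poles at $\pm\beta$), and that the variational inequality $2\operatorname{Re} g(z) < V(z) + \ell$ is strict off $\Sigma_0$. Both points should be tractable using the affine correspondence \eqref{eq:QvsQtilde} to transfer the analogous facts from \cite[\S4]{charlier2019periodic}.
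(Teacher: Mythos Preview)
Your approach is correct and follows the same Deift--Zhou steepest descent strategy as the paper, including the identification of the $S$-property issue and its resolution via the affine link \eqref{eq:QvsQtilde} to \cite{charlier2019periodic}. The one notable difference is where you handle the degree mismatch $n+k$ versus $n$: you absorb it in the first transformation by inserting $\psi(z)^{-k\sigma_3}$ so that $T$ is normalized to $I$ at infinity, whereas the paper leaves $T$ (and $S$) with the asymptotic behaviour $\diag(z^k,z^{-k})$ and instead builds $\psi^k$ into the global parametrix, solving a model RH problem with $N(z)=(I+\bigO(z^{-1}))\diag(z^k,z^{-k})$ via $N(z)=\diag(c^k,c^{-k})\,N_0(z)\,\diag((\psi(z)/c)^k,(\psi(z)/c)^{-k})$. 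The two routes differ only by a diagonal conjugation and yield the same $(1,1)$-entry $A_0(z)\psi(z)^k$; your version has the advantage of a standard model problem, while the paper's version keeps the first transformation purely in terms of $g$ and avoids tracking the extra jump of $\psi^{-k\sigma_3}$ across $\Sigma_0$ before the lens is opened.
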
	
We find it remarkable that the zeros of $q_{n,n}$ tend to $\Sigma_0$,
but $\Sigma_0$ is not the image of the circular arc under the mapping $z \mapsto -z-\alpha - \beta$,
that attracts the zeros of $p_{n,n}$ as $n \to \infty$, compare
the Figures \ref{fig:zeros40CDKL} and \ref{fig:zeros40GK}. Thus the
two sequences of polynomials are genuinely different.

The normalized zero counting measure $\nu(q)$ of a polynomial $q$ 
of degree $n$ is the measure that assigns mass $1/n$ to each of
its zeros, where zeros are counted according to their multiplicities, i.e.,
\begin{align} \label{eq:zero counting measure}
	\nu(q) = \frac{1}{n} \sum_{q(x) = 0} \delta_x. 
\end{align} The convergence in part (b) of \cref{prop:zerosqnn} is in the sense 
of weak convergence
of measures and so $\nu(q_{n,n}) \to \mu_0$ weakly means that
\[ \lim_{n \to \infty} \int f d\nu(q_{n,n}) = \int f d\mu_0 \]
for every bounded continuous function on $\mathbb C$.

The study of zero distributions of polynomials $q_{n,n}$ 
with respect to a varying non-Hermitian orthogonality weight has a
long history, starting with the pioneering works of 
Gonchar and Rakhmanov \cite{gonchar_equilibrium_1989}.
Under quite general conditions, the zeros of polynomials that satisfy a non-Hermitian orthogonality 
tend to smooth curves, that are  critical trajectories of quadratic differentials, 
see for example \cite{deano_large_2014, huybrechs_zero_2014, martinez-finkelshtein_silva}.
See also \cite{martinez-finkelshtein_rakhmanov_dream} and the many references cited therein.
 
\subsection{Zeros of $\det P_{n,n}$}
Our main interest is in the MVOP $P_{n,n}$ that has the orthogonality
\eqref{eq:MVOP1} with matrix weight \eqref{eq:matrixW1}
\[ W(z) = \frac{W_{\alpha}(z)^{4n}}{z^{2n}}, \]
since we recall that $L=4n$ and $K=2n$.
Our second main result deals with the limiting behavior
of the zeros of the determinant of $P_{n,n}$. 
These are sometimes also simply called zeros of $P_{n,n}$ in the literature, 
see for example \cite[p. 98]{duran_lopez-rodriguez-MOPs}.

In the literature there are already many results on the zeros of MVOP when the matrix orthogonality is
on the real line with a semi-positive definite weight matrix.
Theorem 1.1 in \cite{duran_lopez-rodriguez-MOPs} states some properties of the 
zeros of MVOP of finite degree, and the limiting behavior of the zeros of MVOP is 
investigated from two perspectives by Dur\'{a}n, L\'{o}pez-Rodriguez and Saff in \cite{duran_zero_1999}.
See also Theorem 5.2 in \cite{damanik}.
The results in \cite{duran_zero_1999} are  generalized by Delvaux and Dette in \cite{delvaux_dette}.
In the present situation, the limiting behavior of the zeros of $\det P_{n,n}$ is as follows.

\begin{figure}[t] \vspace*{-1cm}
	\centering
%	\begin{overpic}[height=7cm,keepaspectratio]{./images/plot_transformed_zeros_degree_80_on_trajectory.png}
	\begin{overpic}[height=7cm,keepaspectratio]{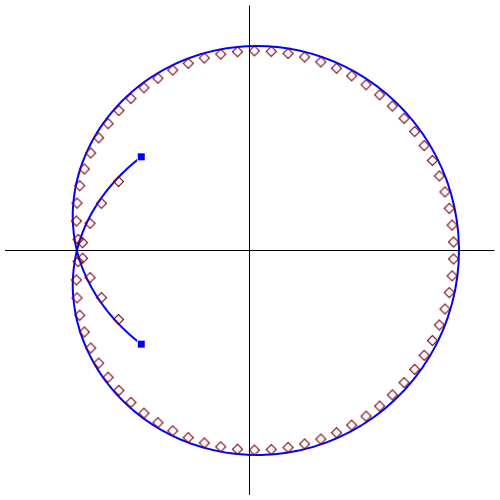}
		\put(80,80){$\widetilde{\Sigma}_0$}
		\put(45,75){zeros of $q_{80,80}$}
		\put(45,65){after mapping}
		\put(45,55){$z \mapsto z^2-\beta^2$}
		\put(25,24){$z_+^2-\beta^2$}
		\put(22,70){$z_-^2-\beta^2$}
		\end{overpic}
	\qquad
%	\begin{overpic}[height=7cm,keepaspectratio]{./images/plot_zeros_det_MVOP_degree_40_on_trajectory.png}
	\begin{overpic}[height=7cm,keepaspectratio]{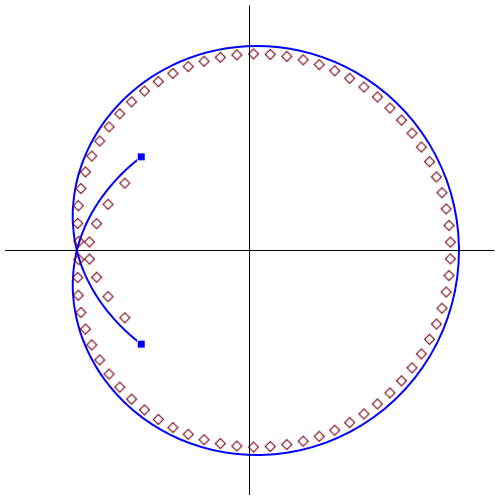}
		\put(80,80){$\widetilde{\Sigma}_0$}
		\put(38,55){zeros of $\det P_{40,40}$}
		\put(25,24){$z_+^2-\beta^2$}
		\put(25,70){$z_-^2-\beta^2$}
		\end{overpic} \vspace*{0cm}
	
	\caption{\label{fig:zeros of determinant}
		The curve $\widetilde{\Sigma}_0$ and
		the zeros of $q_{80,80}$ after the transformation $z \mapsto z^2 - \beta^2$ (left) and the zeros of $\det P_{40,40}$ (right).}
\end{figure}

\begin{theorem}\label{thm:zero distribution determinant}
	Let $\frac{1}{9} < \alpha \leq 1$. Then
	the zeros of $\det P_{n,n}$ tend to the curve
	\begin{equation} \label{eq:pushSigma} 
	\widetilde\Sigma_0 = \{ z^2 - \beta^2 \mid z \in \Sigma_0 \} \end{equation}
	as $n \to \infty$. 
	Furthermore, the normalized zero counting measures of 
	$\det P_{n,n}(z)$ tend to the probability measure $\widetilde\mu_0$ that is 
	the pushforward of $\mu_0$ under the map $z \mapsto z^2 - \beta^2$.
\end{theorem}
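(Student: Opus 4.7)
The plan is to pull $\det P_{n,n}$ back to a scalar quantity via \cref{thm:theorem1}(a), feed the strong asymptotics of \cref{prop:zerosqnn} into it, and read off the limit by a potential-theoretic argument. Taking determinants in \eqref{eq:Pkformula1} with $k=n$, and using $\det\begin{pmatrix}1 & 1\\ \zeta+\beta & -\zeta+\beta\end{pmatrix}=-2\zeta$, one obtains
\[
\det P_{n,n}(\zeta^2-\beta^2) \;=\; \frac{q_{2n}(\zeta)\,q_{2n+1}(-\zeta)\,-\,q_{2n}(-\zeta)\,q_{2n+1}(\zeta)}{-2\zeta}.
\]
By the identifications stated just after \eqref{eq:scalarqkn}, $q_{2n}=q_{2n,2n}$ and $q_{2n+1}=q_{2n+1,2n}+c\,q_{2n,2n}$ for some constant $c$, and a direct expansion shows that the $c$-contribution is antisymmetric in the above $2\times 2$ determinant and cancels. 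The right-hand side thus depends only on $q_{2n,2n}$ and $q_{2n+1,2n}$, to which \cref{prop:zerosqnn}(a) applies (with the proposition's $n$ replaced by $2n$ and $k\in\{0,1\}$).

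Substituting \eqref{eq:qnkasymptotics} then yields, uniformly on compact subsets of $\overline{\C}\setminus(\Sigma_0\cup(-\Sigma_0))$,
\[
\det P_{n,n}(z) \;=\; \frac{A_0(\zeta)\,A_0(-\zeta)\,\bigl(\psi(-\zeta)-\psi(\zeta)\bigr)}{-2\zeta}\,e^{2n\bigl(g(\zeta)+g(-\zeta)\bigr)}\bigl(1+O(n^{-1})\bigr), \qquad z=\zeta^2-\beta^2.
\]
Under the two-to-one map $\zeta\mapsto\zeta^2-\beta^2$ the excluded set in the $\zeta$-plane corresponds to $\widetilde\Sigma_0$ in the $z$-plane, so this asymptotic is locally uniform on $\overline{\C}\setminus\widetilde\Sigma_0$. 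Taking $\frac{1}{2n}\log|\cdot|$ makes the subexponential prefactor drop out, and
\[
\lim_{n\to\infty}\frac{1}{2n}\log|\det P_{n,n}(z)| \;=\; \Re g(\zeta)+\Re g(-\zeta) \;=\; \int\!\log|\zeta^{2}-s^{2}|\,d\mu_0(s) \;=\; \int\!\log|z-w|\,d\widetilde\mu_0(w),
\]
where the last equality uses the change of variables $w=s^2-\beta^2$. Hence the normalized logarithmic potentials of the zero counting measures $\nu(\det P_{n,n})$ converge to $U^{\widetilde\mu_0}$ locally uniformly off $\widetilde\Sigma_0$.

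Both conclusions of \cref{thm:zero distribution determinant} then follow by standard reasoning. The prefactor in the asymptotic formula is a fixed rational function with only finitely many zeros, and $|e^{2n(g(\zeta)+g(-\zeta))}|$ stays bounded away from $0$ on any compactum off $\widetilde\Sigma_0$, so Hurwitz's theorem forbids zeros of $\det P_{n,n}$ from accumulating there. Since $P_{n,n}$ is monic of degree $n$, $\det P_{n,n}$ has exact degree $2n$ and $\nu(\det P_{n,n})$ is a probability measure supported in a fixed compact set; pointwise convergence of potentials off $\widetilde\Sigma_0$ then upgrades to weak-$\ast$ convergence $\nu(\det P_{n,n})\to\widetilde\mu_0$ by the principle of descent (uniqueness of a probability measure with a prescribed logarithmic potential quasi-everywhere).

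The main technical concern is the cancellation in the numerator: because $q_{2n,2n}(\zeta)q_{2n+1,2n}(-\zeta)$ and $q_{2n,2n}(-\zeta)q_{2n+1,2n}(\zeta)$ share the same exponential order, the $O(n^{-1})$ relative errors in \eqref{eq:qnkasymptotics} could in principle swallow the leading term $A_0(\zeta)A_0(-\zeta)(\psi(-\zeta)-\psi(\zeta))e^{2n(g(\zeta)+g(-\zeta))}$. This stays sharp provided $\psi(-\zeta)\ne\psi(\zeta)$; since $\psi(\pm\zeta)$ are distinct algebraic branches that coincide only at isolated points, the formula retains its advertised accuracy on any compactum avoiding $\widetilde\Sigma_0$ together with this finite exceptional set, a negligible exclusion for both the Hurwitz argument and the potential-theoretic conclusion.
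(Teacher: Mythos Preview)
Your proof is correct and follows essentially the same route as the paper: derive the determinant identity \eqref{eq:identity_determinant} from \eqref{eq:Pkformula1}, insert the strong asymptotics \eqref{eq:qnkasymptotics} for $k=0,1$, and conclude via potential theory (the paper's \cref{lem:weaklimit}). Your explicit observation that the $c$-term drops by antisymmetry, and your more cautious treatment of the set where $\psi(\zeta)=\psi(-\zeta)$ (the paper simply asserts this does not occur off $\Sigma_0\cup(-\Sigma_0)$, which follows from the injectivity of $\psi$ on $\C\setminus\Sigma_0$), are both fine refinements of the same argument.
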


The pushforward measure $\widetilde{\mu}_0$ is characterized by the property that
\begin{equation} \label{eq:pushmu0}	
\int f d\widetilde{\mu}_0 = \int f(z^2 - \beta^2) d\mu_0(z) \end{equation}
for every continuous function $f$ on $\widetilde{\Sigma}_0$.

From \cref{prop:zerosqnn}(b) it follows that after transformation
$z \mapsto z^2 - \beta^2$, the zeros of 
$q_{n,n}(z)$ tend to $\widetilde{\Sigma}_0$
as $n\to \infty$ with $\widetilde{\mu}_0$ as limiting normalized
zero counting measure.   The zeros of $\det P_{n,n}$
have the same limiting behavior, as also shown in 
\cref{fig:zeros of determinant},
where we compare the zeros of $\det P_{n,n}$ with
those of $q_{2n,2n}$ for $n=40$. The similar behavior is to 
be expected, because of the identity 
\begin{equation} \label{eq:identity_determinant} 
\det P_{n,n}(z) = 
\frac{q_{2n,2n}(\zeta)q_{2n+1,2n}(-\zeta) - q_{2n+1,2n}(\zeta) q_{2n,2n}(-\zeta)}{2\zeta},
\qquad z = \zeta^2 - \beta^2, \end{equation} 
that easily follows from \eqref{eq:scalarOP2}.

\subsection{Zeros of the $(1,2)$-entry of $P_{n,n}$}

\begin{figure}[t]  \vspace*{-1cm}
	\centering
%	\begin{overpic}[height=7cm, keepaspectratio]{./images/zeros_upper_right_degree_60_with_transformed_support.png}
	\begin{overpic}[height=7cm, keepaspectratio]{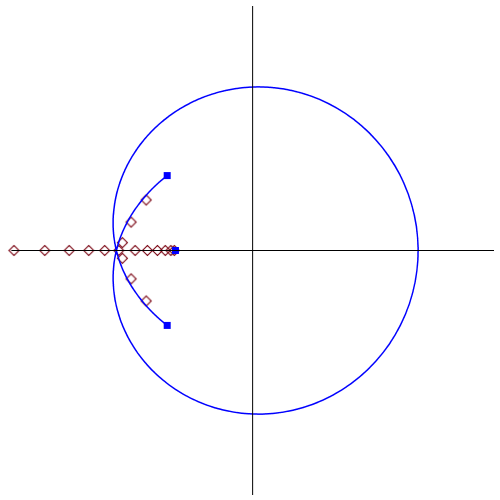}
		\put(80,70){$\widetilde{\Sigma}_0$}
		\put(35,65){$z_-^2-\beta^2$}
		\put(35,32){$z_+^2-\beta^2$}
		\put(32,44){$-\beta^2$}
		\put(0,90){Zeros of $(P_{n,n})_{12}$}
		\put(0,80){with $n=30$}
	\end{overpic}
\quad
%\begin{overpic}[height=7cm, keepaspectratio]{./images/zeros_qn(z)-qn(-z)_with_Sigma0.png}
	\begin{overpic}[height=7cm, keepaspectratio]{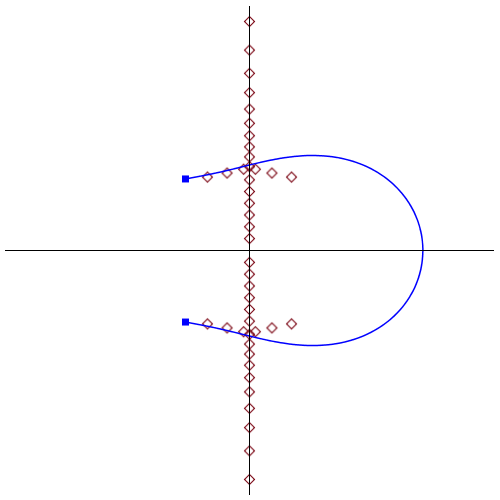}
	\put(77,27){$\Sigma_0$}
	\put(32,62){$z_+$}
	\put(32,34){$z_-$}
	\put(55,90){Zeros of $\frac{q_{n,n}(z)-q_{n,n}(-z)}{2z}$}
	\put(55,80){with $n=60$}
\end{overpic}
\caption{\label{fig:zerosPnn12} 
	The left panel shows the contour $\tilde{\Sigma}_0$
	and part of the zeros of $(P_{n,n})_{12}$ with $n=30$
	and $\alpha = 0.2$. There are more zeros on the negative
	real axis. As $n \to \infty$ the real zeros fill out
	the interval $(-\infty,-\beta^2]$ and the non-real zeros
	follow part of the contour $\widetilde{\Sigma}_0$. 
	The right panel shows the contour $\Sigma_0$ and part of
	the zeros of
	$\frac{q_{n,n}(z)-q_{n,n}(-z)}{2z}$ for the values $n=60$
	and $\alpha =0.2$. There are more zeros on the imaginary
	axis. As $n \to \infty$ the zeros fill out
	the full imaginary axis. Moreover, they will accumulate on the part of $\Sigma_0$ in
	the left half-plane and on the part of $-\Sigma_0$
	in the right half-plane.}
\end{figure}

Finally, we study the asymptotic distribution of the zeros
of the top right entry of the matrix $P_{n,n}$, see \cref{fig:zerosPnn12}. 
The individual entries of a matrix valued orthogonal polynomial 
are not a natural quantity to study, because they depend on the normalization of the 
matrix $P_{n,n}$, which in our case is taken to be monic.
Nevertheless, we were curious to look for the limiting distribution of their zeros, 
and we chose the $(1,2)$-entry
since this entry has the simplest expression in terms of the
scalar orthogonal polynomials. 
Indeed, by \eqref{eq:scalarOP2}, the $(1,2)$-entry of $P_{n,n}$ is given by
\begin{equation} \label{eq:Pnn12}
 (P_{n,n})_{12}(z) =  
\frac{q_{2n,2n}(\zeta) - q_{2n,2n}(-\zeta)}{2 \zeta},
\end{equation}
where $\zeta = (z + \beta^2)^{1/2}$ as before.
We find the limiting distribution of the zeros of
the right-hand side of \eqref{eq:Pnn12} and then
the  limiting distribution of the zeros of $(P_{n,n})_{12}$
follows after a coordinate transformation. Our numerical explorations
show that the other entries of $P_{n,n}$ have the same limiting
behavior of their zeros.

In order to prove the result we need the inequality \eqref{eq:inequality g-function} (see below)
for the  $g$-function \eqref{eq:gzdef} associated with
the measure $\mu_0$ on $\Sigma_0$.
We were not able to prove \eqref{eq:inequality g-function} analytically, 
but we are able to establish it under a geometric 
condition on $\Sigma_0$, see  \cref{thm:geometrical condition}
which is supported by numerical evidence.

In the statement of \cref{thm:zero distribution top right entry}
and also further throughout the paper, we use 
$\LHP = \{z \in \mathbb C \mid \Re z < 0 \}$ and 
$\RHP = \{z \in \mathbb C \mid \Re z > 0 \}$ 
to denote the open left and right half-planes, respectively.

\begin{theorem}\label{thm:zero distribution top right entry}
	Let $\frac19 < \alpha \leq 1$. 	Suppose that
	\begin{align}\label{eq:inequality g-function}
	\Re g(-z) > \Re g(z), \quad \text{for } z \in \RHP.
	\end{align}
	Then the sequence of normalized zero counting measures of
	the polynomials $\frac{q_{n,n}(z) - q_{n,n}(-z)}{2z}$
		tends weakly to the probability measure 
	\begin{equation} \label{eq:limitnu} 
		\nu = \nu_L + \nu_R + \nu_0, \end{equation}
	as $n \to \infty$, 
	where $\nu_L = \mu_0|_{\LHP}$, $\nu_R = \mu^*_0|_{\RHP}$,
	and $\nu_0 = \Bal\left(\mu_0|_{\RHP}\right) - \Bal\left(\mu_0|_{\LHP}\right)$. 
	Here $\Bal$ denotes the balayage of a measure to the imaginary axis, and
	$\mu^*_0$ is the pushforward of the measure $\mu_0$ under
	the sign change $z \mapsto -z$.  
	
	The sequence of normalized zero counting measures
	of $(P_{n,n})_{12}$ tends to $\widetilde{\nu}$
	where $\widetilde{\nu}$ is the pushforward of
	$\nu$ under the map $z \mapsto z^2-\beta^2$.
\end{theorem}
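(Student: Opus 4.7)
The plan is: first, find the limit distribution of the zeros of
$f_N(z) := (q_{N,N}(z) - q_{N,N}(-z))/(2z)$, and second, transfer
to $(P_{n,n})_{12}$ via \eqref{eq:Pnn12}. The transfer is a routine
change of variables: with $N=2n$, the polynomial $f_N$ is even in
$\zeta$, so its zeros come in $\pm$-pairs that map $2$-to-$1$
under $\zeta\mapsto\zeta^2-\beta^2$ to the zeros of $(P_{n,n})_{12}$,
and weak convergence of the normalized counting measures is
preserved by this pushforward.

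For the first step I insert the strong asymptotic formula
\eqref{eq:qnkasymptotics} with $k=0$ into the difference,
\[
q_{n,n}(z)-q_{n,n}(-z) = A_0(z)\,e^{ng(z)}\bigl(1+\bigO(1/n)\bigr)
- A_0(-z)\,e^{ng(-z)}\bigl(1+\bigO(1/n)\bigr),
\]
uniformly on compacta of $\overline{\C}\setminus(\Sigma_0\cup(-\Sigma_0))$.
Hypothesis \eqref{eq:inequality g-function} gives $\Re g(-z)>\Re g(z)$
on $\RHP$, and substituting $-z$ for $z$ yields the opposite inequality
on $\LHP$. Hence $e^{ng(z)}$ dominates on $\LHP\setminus\Sigma_0$ and
$e^{ng(-z)}$ on $\RHP\setminus(-\Sigma_0)$, so
\[
\frac{1}{n}\log|f_n(z)| \;\longrightarrow\;
\Phi(z) := \max\bigl(\Re g(z),\,\Re g(-z)\bigr)
\]
uniformly on compacta in $\C\setminus(i\R\cup\Sigma_0\cup(-\Sigma_0))$.
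Since $\mu_0$ is compactly supported, $\Re g(z) = -U^{\mu_0}(z)$;
conjugation symmetry of $\mu_0$ gives $\Re g(-z) = -U^{\mu_0^*}(z)$.
Standard envelope arguments for zeros of non-Hermitian orthogonal
polynomials (cf.\ \cite{gonchar_equilibrium_1989,mf_rakhmanov_critical_2011,huybrechs_zero_2014})
then identify any weak-$*$ limit $\nu$ of the normalized zero counting
measures of $f_n$ as the unique probability measure whose logarithmic
potential matches $-\Phi$, up to an additive constant, off $\supp \nu$.

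It remains to check that $\nu=\nu_L+\nu_R+\nu_0$ has this potential.
Two balayage identities do the job. The defining balayage property
gives $U^{\Bal(\mu_0|_{\RHP})}(z)=U^{\mu_0|_{\RHP}}(z)$ for
$z\in\LHP$ and $U^{\Bal(\mu_0|_{\LHP})}(z)=U^{\mu_0|_{\LHP}}(z)$
for $z\in\RHP$. A reflection identity on $i\R$, based on
$|z-s|=|-\bar z-s|$ for $s\in i\R$ together with conjugation
symmetry of the restrictions of $\mu_0$, gives
$U^{\Bal(\mu_0|_{\LHP})}(z)=U^{\mu_0^*|_{\RHP}}(z)$ for $z\in\LHP$
and $U^{\Bal(\mu_0|_{\RHP})}(z)=U^{\mu_0^*|_{\LHP}}(z)$ for
$z\in\RHP$. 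Substituting into $U^\nu = U^{\nu_L}+U^{\nu_R}+U^{\nu_0}$
collapses $U^\nu$ to $U^{\mu_0}$ on $\LHP$ and to $U^{\mu_0^*}$
on $\RHP$, matching $-\Phi$ as required.

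I expect the main obstacle to be the behavior on the transition curve
$i\R$, where the two exponentials balance and can cancel, so that
neither term dominates in the asymptotic formula. One must show that
the zeros of $f_n$ do not escape off $i\R$ inside a thickening of the
axis, and that the local density on $i\R$ is the (positive)
Radon--Nikodym density of $\Bal(\mu_0|_{\RHP})-\Bal(\mu_0|_{\LHP})$.
The cleanest way is the argument principle applied to
$q_{n,n}(-z)/q_{n,n}(z)$ on long segments of $i\R$: zeros of $f_n$ on
the axis are precisely the preimages of $1$ of this ratio, and the
winding number is controlled by $n$ times the tangential derivative of
$\Im(g(-z)-g(z))$ along $i\R$. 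A Poisson-integral computation, using
the conjugation symmetry of $\mu_0$, identifies that derivative with
the balayage density of $\nu_0$, and hypothesis
\eqref{eq:inequality g-function} guarantees its positivity. Once this
local count is established, the identification of $\nu$ is complete,
and the pushforward delivers $\widetilde{\nu}$ for $(P_{n,n})_{12}$.
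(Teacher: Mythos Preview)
Your outline captures the broad strategy correctly: use the strong asymptotics \eqref{eq:qnkasymptotics} to show
\[
\frac{1}{n}\log|Q_n(z)| \longrightarrow \max\bigl(\Re g(z),\Re g(-z)\bigr) = -U^{\nu}(z)
\]
uniformly on compacta of $\C\setminus(i\R\cup\Sigma_0\cup(-\Sigma_0))$, and your balayage computation of $U^\nu$ is equivalent to the paper's \cref{lem:limit signed measure is a probability measure}(a). But the phrase ``standard envelope arguments then identify any weak-$*$ limit'' hides the main difficulty, and your proposed argument-principle fix on $i\R$ is a different route from what the paper does.

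The gap is \emph{tightness}. The limiting measure $\nu$ is supported on all of $i\R$, so the zeros of $Q_n$ are not uniformly bounded; the normalized counting measures $\nu(Q_n)$ are not a tight family, and a subsequential weak limit could a priori lose mass at infinity. The Lower Envelope Theorem behind the ``standard envelope argument'' (the paper's \cref{lem:weaklimit}) requires the zeros to lie in a fixed compact set. Your winding-number sketch would need to produce a quantitative count of zeros in bounded windows of $i\R$ and show none escape; this is feasible but delicate, and you do not actually carry it out. The paper bypasses the issue with a compactification trick: pick $a\notin i\R\cup\Sigma_0\cup(-\Sigma_0)$, set $T_a(z)=(z-a)^{-1}$, and observe that
\[
z\;\longmapsto\;\frac{z^n}{Q_n(a)}\,Q_n\!\left(z^{-1}+a\right)
\]
is a monic polynomial of degree $n$ whose zeros are $T_a(z_{j,n})$ together with a zero of order $n-d_n\leq 2$ at the origin. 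These zeros are uniformly bounded, so \cref{lem:weaklimit} applies directly to give weak convergence to $T_a^*\nu$, hence $\nu(Q_n)\to\nu$.

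A second point you skate over is the positivity of $\nu_0=\Bal(\mu_0|_{\RHP})-\Bal(\mu_0|_{\LHP})$. You propose to read it off from the sign of the tangential derivative of $\Im(g(-z)-g(z))$ along $i\R$; the paper instead deduces it purely potential-theoretically from hypothesis \eqref{eq:inequality g-function} via De~La~Vall\'ee~Poussin's theorem (applied to the inequality $U^{\mu_R}+U^{\Bal(\mu_L)}\geq U^{\mu_R^*}+U^{\Bal(\mu_L^*)}$ with equality on $\overline{\LHP}$). This is cleaner because it requires no local analysis on $i\R$ at all.
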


The balayage measure $\hat{\mu} = \Bal(\mu)$ of a measure $\mu$ onto the
imaginary axis is characterized by the properties 
that $\supp(\hat{\mu}) \subset i \mathbb R$, $\int d\hat{\mu} = \int d\mu$ and 
$U^{\hat{\mu}}(z) = U^{\mu}(z)$ for $z \in i \mathbb R$
where $U^{\mu}(z) = \int \log \frac{1}{|z-s|}d\mu(s)$ denotes
the logarithmic potential of $\mu$, see e.g.\ \cite{saff_logarithmic_1997}.

In \eqref{eq:limitnu} we have that $\nu_0$ is the part of $\nu$ that
is on the imaginary axis. It is given as the difference between
two balayage measures, which a priori need not be positive.  
The condition \eqref{eq:inequality g-function} is needed
in order to show that 
\begin{equation} \label{eq:Bal inequality}
\Bal\left(\mu_0|_{\LHP}\right) \leq \Bal\left(\mu_0|_{\RHP}\right)
\end{equation}
in the sense of measures, such that $\nu_0$ is indeed positive,
see \cref{lem:limit signed measure is a probability measure}.
Then according to \cref{thm:zero distribution top right entry} 
$\nu_0$ is the limiting distribution for the zeros 
on the imaginary axis. Very loosely speaking, the inequality
\eqref{eq:Bal inequality} expresses
the fact that there is more of $\mu_0$ in the right half-plane than 
in the left half-plane.

\begin{figure}[t]\vspace*{0cm}
	\centering
%	\begin{overpic}[scale=0.4]{./images/sigma_and_reflected_sigma_alpha_0p112.png} 	\put(90,60){$\Sigma_0$}
	\begin{overpic}[scale=0.4]{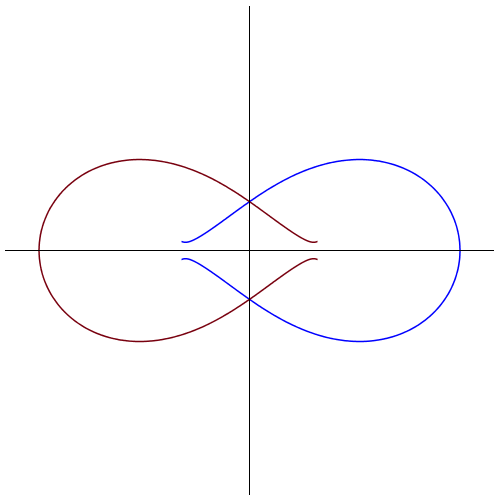} 	\put(90,60){$\Sigma_0$}
	\put(0,60){$-\Sigma_0$}
		\put(20,55){$\Omega_0$}
		\put(5,90){$\alpha = 0.112$}
		\put(61,52){$-z_-$}
		\put(61,45){$-z_+$}
		\put(32,53){$z_+$}
		\put(32,45){$z_-$}
	\end{overpic}
	\qquad
%	\begin{overpic}[scale=0.4]{./images/sigma_and_reflected_sigma_alpha_0p12.png} 	\put(90,60){$\Sigma_0$}
		\begin{overpic}[scale=0.4]{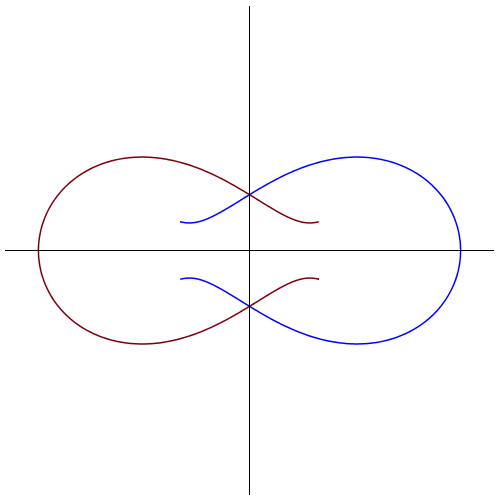} 
		\put(0,60){$-\Sigma_0$}	
		\put(20,55){$\Omega_0$}
		\put(5,90){$\alpha = 0.12$}
		\put(61,52){$-z_-$}
		\put(61,45){$-z_+$}
		\put(32,53){$z_+$}
		\put(32,45){$z_-$}
		\end{overpic} \\  \vspace*{-1cm}
	
%	\begin{overpic}[scale=0.4]{./images/sigma_and_reflected_sigma_alpha_0p2.png}
		\begin{overpic}[scale=0.4]{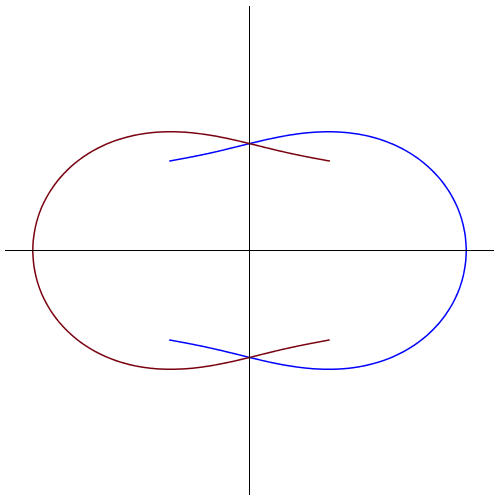} 
	\put(83,70){$\Sigma_0$}
	\put(7,70){$-\Sigma_0$}	
	\put(20,55){$\Omega_0$}
	\put(5,90){$\alpha = 0.2$}	
	\put(61,65){$-z_-$}
	\put(61,33){$-z_+$}
	\put(31,65){$z_+$}
	\put(31,33){$z_-$}
\end{overpic}
	\qquad
%	\begin{overpic}[scale=0.4]{./images/sigma_and_reflected_sigma_alpha_0p4.png}
		\begin{overpic}[scale=0.4]{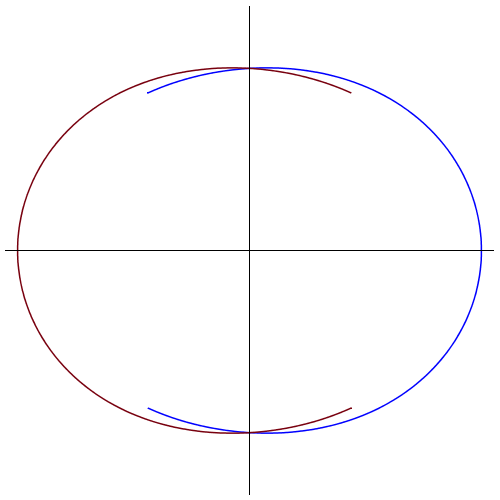} 
	\put(86,75){$\Sigma_0$}
	\put(4,75){$-\Sigma_0$} 	
	\put(20,55){$\Omega_0$}
	\put(5,90){$\alpha = 0.4$}	
	\put(64,77){$-z_-$}
	\put(65,21){$-z_+$}
	\put(27,77){$z_+$}
	\put(27,21){$z_-$}
	\end{overpic}
	\caption{\label{fig:geometrical assumption}
	The contours $\Sigma_0$ (in blue) and $-\Sigma_0$ (in red) for the values of $\alpha$ 
	equal to $0.112$, $0.12$, $0.2$ and $0.4$ from left to right.
	The part of $\Sigma_0$ in the right half-plane and the part of $-\Sigma_0$ in the left half-plane enclose a bounded domain $\Omega_0$
	and the figures show that the part of $\Sigma_0$ in the left half-plane
	belongs to $\Omega_0$.
	}
\end{figure}

\subsection{Geometric condition}
The trajectory $\Sigma_0$ intersects the imaginary axis
and so it is partly in the right half-plane and partly in
the lower half-plane. The part of $\Sigma_0$ in the right half-plane together with the part of $-\Sigma_0$
in the left half-plane enclose a bounded domain $\Omega_0$,
as shown in \cref{fig:geometrical assumption}.

\begin{theorem} \label{thm:geometrical condition}
	Suppose that the part of $\Sigma_0$ in the left half-plane
	(and by symmetry, the part of $-\Sigma_0$ in the right half-plane) are contained in $\Omega_0$. Then the
	inequality \eqref{eq:inequality g-function} holds.
\end{theorem}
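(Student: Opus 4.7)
The plan is to define $u(z) := \Re g(-z) - \Re g(z) = U^{\mu_0}(z) - U^{\mu_0^*}(z)$, where $\mu_0^*$ is the pushforward of $\mu_0$ under $z \mapsto -z$, and to prove $u > 0$ on $\RHP$. I would first collect three basic properties of $u$, valid without any hypothesis: it is continuous on $\mathbb{C}$ and harmonic off $\Sigma_0 \cup (-\Sigma_0)$; it is odd under $z \mapsto -z$, and combined with the conjugation symmetry of $\mu_0$ this forces $u \equiv 0$ on $i\mathbb{R}$; and $u(z) \to 0$ as $|z| \to \infty$, since $\mu_0$ and $\mu_0^*$ are probability measures.

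Next I use the geometric hypothesis, which gives $\Sigma_0 \cup (-\Sigma_0) \subset \overline{\Omega_0}$. In $\RHP$ the curves $\Sigma_0 \cap \overline{\RHP}$ (lying on $\partial\Omega_0$) and $-\Sigma_0 \cap \overline{\RHP}$ (lying strictly inside $\Omega_0$) split the half-plane into two open subregions on which $u$ is harmonic: the unbounded exterior $D_{\mathrm{out}} := \Omega_0^c \cap \RHP$ and the slit interior $D_{\mathrm{in}} := (\Omega_0 \cap \RHP) \setminus (-\Sigma_0 \cap \RHP)$. Applying the minimum principle on each region, using that $u = 0$ on the imaginary-axis portions of their boundaries and at $\infty$, reduces the proof to the two boundary inequalities
\[
u \geq 0 \text{ on } \Sigma_0 \cap \overline{\RHP} \quad \text{and} \quad u \geq 0 \text{ on } -\Sigma_0 \cap \overline{\RHP}.
\]
Strict positivity of $u$ on $\RHP$ then follows from the strong maximum principle, since $u$ is not identically zero on any component.

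The main obstacle is establishing these boundary inequalities. For $z \in -\Sigma_0 \cap \RHP$, writing $z = -w$ with $w \in \Sigma_0 \cap \LHP$ and invoking the variational equality $2\Re g(w) = \Re V(w) + \ell$ on $\Sigma_0$ (where $V$ is the external field associated with the weight and $\ell$ the equilibrium constant) recasts the inequality as $\Re g(z) \leq \tfrac{1}{2}(\Re V(-z) + \ell)$, a bound on $\Re g$ at a point interior to $\Omega_0$; the inequality on $\Sigma_0 \cap \overline{\RHP}$ is reformulated symmetrically. My plan for the bound is to balayage the interior measures $\mu_0|_{\LHP}$ and $\mu_0^*|_{\RHP}$ (both sitting in $\Omega_0$ by the hypothesis) onto $\partial \Omega_0$. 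Because $\Omega_0$ is invariant under $z \mapsto -z$, the two balayaged measures are related by the same reflection, so outside $\overline{\Omega_0}$ the function $u$ is the logarithmic potential of a signed measure on $\partial \Omega_0$ whose density is antisymmetric under $z \mapsto -z$. This antisymmetry, together with $u = 0$ on $i\mathbb{R} \cup \{\infty\}$, can be exploited via the Poisson representation in $D_{\mathrm{out}}$ to recover the sign of $u$ on $\Sigma_0 \cap \overline{\RHP}$. The inequality on $-\Sigma_0 \cap \overline{\RHP}$ is then obtained by propagating this sign inward through $D_{\mathrm{in}}$ via a parallel argument with the roles of boundary and interior parts of $\mu_0$ and $\mu_0^*$ swapped. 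The hypothesis is essential precisely in this last step: without it, the interior and boundary parts of the measures do not separate cleanly across $\partial \Omega_0$, the antisymmetric structure on $\partial \Omega_0$ is lost, and the sign of $u$ is no longer forced.
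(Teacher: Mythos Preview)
Your overall framework---reducing to boundary inequalities and applying the maximum principle---is the right shape, and in fact the paper's final step is exactly a subharmonicity argument of this kind. But your proposal has a genuine gap at the point that actually carries all the weight: establishing the sign of $u$ on $\Sigma_0 \cap \overline{\RHP}$ and on $-\Sigma_0 \cap \overline{\RHP}$.

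Your plan for this is to balayage $\mu_0|_{\LHP}$ and $\mu_0^*|_{\RHP}$ onto $\partial\Omega_0$ and use that the resulting signed measure is antisymmetric under $z\mapsto -z$. That antisymmetry is correct, but it does not by itself determine the sign of the potential on one half of $\partial\Omega_0$: an antisymmetric signed measure can produce a potential of either sign on a given arc. The appeal to a ``Poisson representation in $D_{\mathrm{out}}$'' is circular---$\Sigma_0\cap\overline{\RHP}$ is part of $\partial D_{\mathrm{out}}$, so the Poisson formula there requires exactly the boundary values you are trying to deduce. The same circularity reappears when you try to ``propagate inward through $D_{\mathrm{in}}$''. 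In short, the balayage step rearranges the problem without solving it.

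The paper handles this key step by a completely different, computational route. First, it observes that $\Re g(z)-\Re g(-z)$ is \emph{sub}harmonic (not merely harmonic) on $\RHP\setminus(-\Sigma_0)$, because $\Re g=-U^{\mu_0}$ is subharmonic across $\Sigma_0$. This means one only needs the inequality on $-\Sigma_0\cap\RHP$; there is no separate argument on $\Sigma_0\cap\RHP$. Second, that single boundary inequality is obtained by writing $g'(z)=\tfrac12 V_\alpha'(z)+Q_\alpha(z)^{1/2}$, doing a partial-fraction decomposition, and proving an explicit monotonicity lemma: for the square root $D_0(z)=((z-z_+)(z-z_-))^{1/2}$ with a suitable branch cut, one has $\partial_x\,\Re\!\left(\dfrac{D_0(z)-D_0(x)}{z-x}\right)>0$ for all real $x$. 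This forces $\Re g'<0$ throughout a region $\Omega_1$, and then for $z_0\in(-\Sigma_0)\cap\RHP$ one integrates $g'$ along the horizontal segment from $-\overline{z_0}\in\Sigma_0$ to $z_0$ to conclude $\Re(g(z_0)-g(-\overline{z_0}))<0$. The geometric hypothesis enters precisely here: it guarantees $z_0\in\Omega_1$, so that the explicit sign of $\Re g'$ applies along the path.

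So: keep your subharmonic/maximum-principle wrapper, but replace the balayage sketch with an explicit computation of $\Re g'$ along horizontal segments; that is where the real work (and the real use of the hypothesis) lies.
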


The above condition can be equivalently reformulated as a requirement on $\widetilde{\Sigma}_0$ from \eqref{eq:pushSigma} as follows.
The part of $\widetilde{\Sigma}_0$ that is the image of the part of $\Sigma_0$ in the closed right half-plane under the mapping $z \mapsto z^2 - \beta^2$ is a closed curve that divides the plane in a bounded and an unbounded set: the requirement then becomes that the remaining part of $\widetilde{\Sigma}_0$ should be contained in the bounded set.
See for example \cref{fig:zeros of determinant}.

The condition of \cref{thm:geometrical condition} is
strongly supported by computational evidence. See \cref{fig:geometrical assumption} for plots of $\Sigma_0$, $-\Sigma_0$ and $\Omega_0$ for
a number of $\alpha$ values.

\subsection{Overview of the rest of the paper}

The rest of the paper is organized as follows. 
In \cref{sec:tilings}, we discuss lozenge tilings of the hexagon
and its connection with MVOPs and this leads to the proof of 
\cref{prop:exist}.
In \cref{sec:proof of main result}, we prove \cref{thm:theorem1}.
Then, in \cref{sec:strong asymptotics}, we derive the strong asymptotics of the scalar orthogonal polynomials $q_{n+k,n}$ as $n \to \infty$
using the Deift-Zhou steepest descent method with a small twist.
We use the strong asymptotics of $q_{n,n}$ and $q_{n+1,n}$ to prove \cref{prop:zerosqnn,thm:zero distribution determinant}.
In \cref{sec:proofofthm25}, we study the properties of the upper right entry of $P_n$ and prove \cref{thm:zero distribution top right entry}.
Finally, in \cref{sec:proofofthm26}, \cref{thm:geometrical condition} 
is proved.

\section{Tilings of a hexagon} \label{sec:tilings}

\subsection{Introduction} \label{subsec:tilings1}

Random lozenge tilings of a hexagon have been studied extensively 
in the last decades because of its remarkable connections
with various fields of mathematics and physics, see the book \cite{gorin_2021} of Vadim Gorin for an introduction to the topic.
In the simplest random model one assigns an equal probability
to each possible tiling, and within this model the arctic circle
phenomenon was observed and proved, see \cite{cohn_larsen_propp} or \cite[Section 3.4]{baik_discrete}. 
Periodically weighted tilings were studied
in \cite{kenyon_okounkov_sheffield} and in \cite{chhita_johansson} (for a related
tiling model of a so-called Aztec diamond). 
Recently in \cite{duits2017periodic}, a new technique
based on MVOPs was developed to study random tiling models with periodic weightings. The matrix valued orthogonality
\eqref{eq:MVOP1} with weights \eqref{eq:weightW} 
appears in this context, as we will explain now.

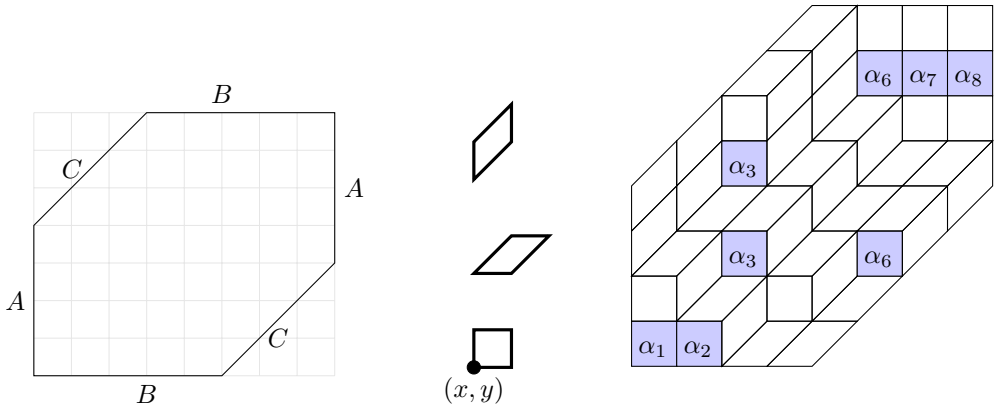
\begin{figure}[t]
	\centering
	\begin{tikzpicture}[scale=0.5]
	\draw[help lines, color=black!10] (0,0) grid (8,7);
	\draw (0,0) -- (5,0) -- (8,3) -- (8,7) -- (3,7) -- (0,4) -- cycle;
	\draw (-1,2) node[right] {$A$}; \draw(8,5) node[right]{$A$};
	\draw (3,0) node[below] {$B$}; \draw (5,7) node[above]{$B$};
	\draw (6.5,1.5) node[below]{$C$}; \draw (0.5,5.5) node[right]{$C$};
	\end{tikzpicture}
	\centering \qquad
	\begin{tikzpicture}[scale=0.5]
		\draw[draw=black, very thick] (0,0) rectangle (1,1);
		\filldraw (0,0) circle (5pt) node[below]{$(x,y)$};
		\draw[draw=black, very thick] (0,5) -- (1,6) -- (1,7) -- (0,6) -- cycle;
		\draw[draw=black, very thick] (0,2.5) -- (1,2.5) -- (2,3.5) -- (1,3.5) -- cycle;
	\end{tikzpicture} 
	\centering \quad
\begin{tikzpicture}[scale=0.6]
	%\draw[help lines] (0,0) grid (8,8);
	
	\draw[white] (-1,-1) -- (9,9);
	
	% lower left vertex height 0
	\filldraw[draw=black, fill=blue!20] (0,0) -- (1,0) node[above left]{$\alpha_1$} -- (1,1) -- (0,1) -- cycle;
	\filldraw[draw=black, fill=blue!20] (1,0) -- (2,0) node[above left]{$\alpha_2$} -- (2,1) -- (1,1) -- cycle;
	\draw[draw=black] (2,0) -- (3,1) -- (3,2) -- (2,1) -- cycle;
	\draw[draw=black] (2,0) -- (3,0) -- (4,1) -- (3,1) -- cycle;
	\draw[draw=black] (3,0) -- (4,0) -- (5,1) -- (4,1) -- cycle;
	
	% lower left vertex height 1
	\draw (0,1) -- (1,1) -- (1,2) -- (0,2) -- cycle;
	\draw (1,1) -- (2,2) -- (2,3) -- (1,2) -- cycle;
	\draw (1,1) -- (2,1) -- (3,2) -- (2,2) -- cycle;
	\draw (3,1) -- (4,1) -- (4,2) -- (3,2) -- cycle;
	\draw (4,1) -- (5,2) -- (5,3) -- (4,2) -- cycle;
	\draw (4,1) -- (5,1) -- (6,2) -- (5,2) -- cycle;
	
	% lower left vertex height 2
	\draw (0,2) -- (1,3) -- (1,4) -- (0,3) -- cycle;
	\draw (0,2) -- (1,2) -- (2,3) -- (1,3) -- cycle;
	\filldraw[draw=black, fill=blue!20] (2,2) -- (3,2) node[above left]{$\alpha_3$} -- (3,3) -- (2,3) -- cycle;
	\draw (3,2) -- (4,3) -- (4,4) -- (3,3) -- cycle; 
	\draw (3,2) -- (4,2) -- (5,3) -- (4,3) -- cycle;
	\filldraw[draw=black, fill=blue!20] (5,2) -- (6,2) node[above left]{$\alpha_6$} -- (6,3) -- (5,3) -- cycle;
	\draw (6,2) -- (7,3) -- (7,4) -- (6,3) -- cycle;
	
	% lower left vertex height 3
	\draw (0,3) -- (1,4) -- (1,5) -- (0,4) -- cycle;
	\draw (1,3) -- (2,4) -- (2,5) -- (1,4) -- cycle;
	\draw (1,3) -- (2,3) -- (3,4) -- (2,4) -- cycle;
	\draw (2,3) -- (3,3) -- (4,4) -- (3,4) -- cycle;
	\draw (4,3) -- (5,4) -- (5,5) -- (4,4) -- cycle;
	\draw (4,3) -- (5,3) -- (6,4) -- (5,4) -- cycle;
	\draw (5,3) -- (6,3) -- (7,4) -- (6,4) -- cycle;
	\draw (7,3) -- (8,4) -- (8,5) -- (7,4) -- cycle;
	
	% lower left vertex height 4
	\draw (1,4) -- (2,5) -- (2,6) -- (1,5) -- cycle;
	\filldraw[draw=black, fill=blue!20] (2,4) -- (3,4) node[above left]{$\alpha_3$} -- (3,5) -- (2,5) -- cycle;
	\draw (3,4) -- (4,5) -- (4,6) -- (3,5) -- cycle;
	\draw (3,4) -- (4,4) -- (5,5) -- (4,5) -- cycle;
	\draw (5,4) -- (6,5) -- (6,6) -- (5,5) -- cycle;
	\draw (5,4) -- (6,4) -- (7,5) -- (6,5) -- cycle;
	\draw (6,4) -- (7,4) -- (8,5) -- (7,5) -- cycle;
	
	% lower left vertex height 5
	\draw (2,5) -- (3,5) -- (3,6) -- (2,6) -- cycle;
	\draw (3,5) -- (4,6) -- (4,7) -- (3,6) -- cycle;
	\draw (4,5) -- (5,6) -- (5,7) -- (4,6) -- cycle;
	\draw (4,5) -- (5,5) -- (6,6) -- (5,6) -- cycle;
	\draw (6,5) -- (7,5) -- (7,6) -- (6,6) -- cycle;
	\draw (7,5) -- (8,5) -- (8,6) -- (7,6) -- cycle;
	
	% lower left vertex height 6
	\draw (2,6) -- (3,6) -- (4,7) -- (3,7) -- cycle;
	\draw (4,6) -- (5,7) -- (5,8) -- (4,7) -- cycle;
	\filldraw[draw=black, fill=blue!20] (5,6) -- (6,6) node[above left]{$\alpha_6$} -- (6,7) -- (5,7) -- cycle;
	\filldraw[draw=black, fill=blue!20] (6,6) -- (7,6) node[above left]{$\alpha_7$} -- (7,7) -- (6,7) -- cycle;
	\filldraw[draw=black, fill=blue!20] (7,6) -- (8,6) node[above left]{$\alpha_8$} -- (8,7) -- (7,7) -- cycle;
	
	% lower left vertex height 7
	\draw (3,7) -- (4,7) -- (5,8) -- (4,8) -- cycle;
	\draw (5,7) -- (6,7) -- (6,8) -- (5,8) -- cycle;
	\draw (6,7) -- (7,7) -- (7,8) -- (6,8) -- cycle;
	\draw (7,7) -- (8,7) -- (8,8) -- (7,8) -- cycle;
\end{tikzpicture}	
	\caption{\label{fig:ABC hexagon} An $ABC$-hexagon with side lengths $A = 4$, $B = 5$ and $C = 3$ (left) can be covered by the 
	three types of lozenges that are shown in the middle.
	The weighting \eqref{Wtile} depends on the coordinates
	$(x,y)$ of the square tiles. The right panel shows
	 a possible weighted tiling in which the shaded square tiles
 	 have the indicated weight $\alpha_j$ while all other
  tiles have weight $1$.}
\end{figure}
We follow \cite{charlier2020doubly, charlier2019periodic}.  
An $ABC$-hexagon has its vertices at the points $(0,0)$, $(B,0)$, $(B+C,C)$, $(B+C,A+C)$, $(C,A+C)$ and $(0,A)$ as in \cref{fig:ABC hexagon}.
The hexagon can then be covered with the three types of lozenges shown in \cref{fig:ABC hexagon} as well. This can be done in many
ways, and a particular example of a tiling is shown in the right panel
of \cref{fig:ABC hexagon} for the case of a regular
hexagon with $A=B=C=4$. 
The vertices of the lozenges all lie on the integer lattice
$\mathbb Z^2$.

\begin{figure}[t]
	\centering \vspace*{-1.5cm}
	\hspace*{-2cm}
	\includegraphics[scale=0.4]{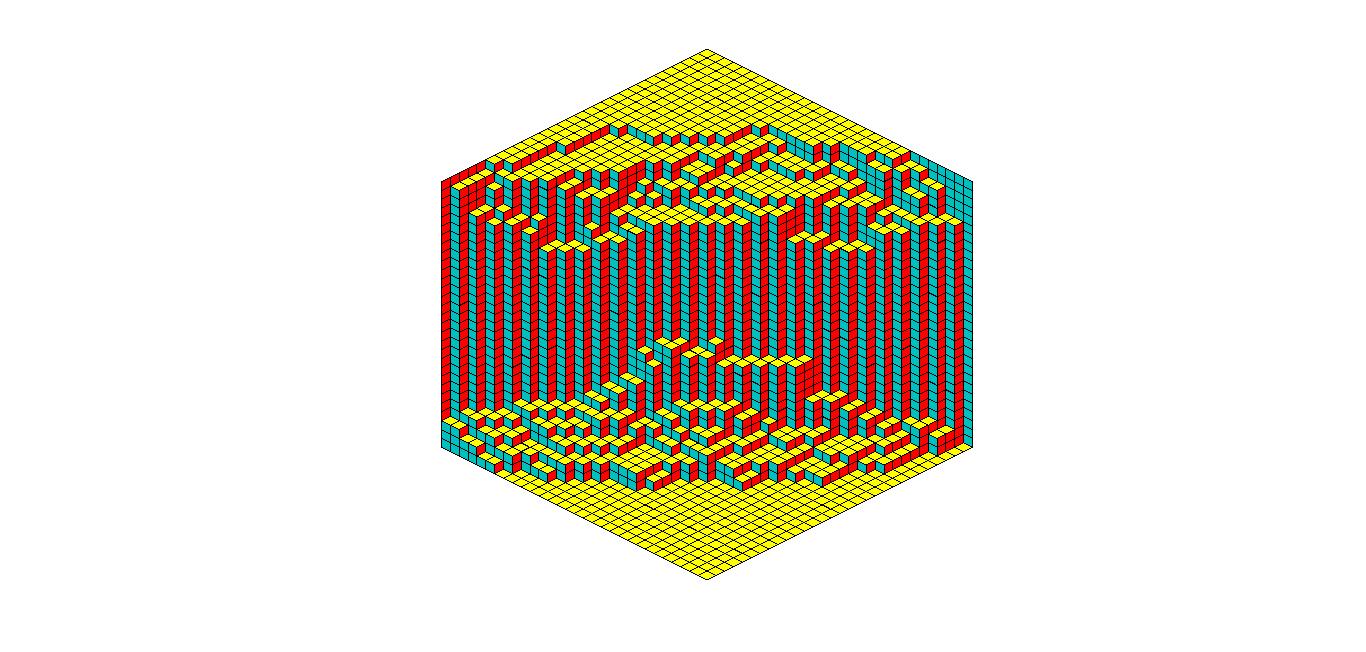} 
	\vspace*{-10mm}
	\caption{\label{fig:low alpha tiling}
		A typical lozenge tiling of a larger size hexagon 
		with periodic weighting with parameter $\alpha_j = 
		\alpha = 1/10$ for all $j$. 
		The regular hexagon in this picture is obtained from the one
		in \cref{fig:ABC hexagon} by a shear
		transformation $(x,y) \mapsto (x,y-x/2)$.
	The figure is due to Christophe Charlier.}
\end{figure}

In a weighted tiling model, certain weights are assigned
to the lozenges, depending on their shape and on their location
in the hexagon. In a periodic weighting this is done
in a periodic fashion with respect to at least one
direction. The simplest model is to introduce
two periodicity in one direction, say the vertical direction,
depending on the location of the square tiles. 
In this model we fix parameters $\alpha_1, \alpha_2, \ldots, \alpha_L > 0$ with $L=B+C$, and put
\begin{align} \label{Wtile}
\mathcal{W}\left(\begin{tikzpicture}[baseline={(0,0)},scale=0.5]
\draw[draw=black, very thick] (0,0) rectangle (1,1);
\filldraw (0,0) circle (2.5pt) node[below]{$(x,y)$};
\draw[white] (-1.30,-0.75) rectangle (1.25,1.15);
\end{tikzpicture} \right) = \begin{cases}
\alpha_j &\text{if $x = j-1$, and  $y$ is even}, \\
1 & \text{if $y$ is odd},
\end{cases}
\end{align}
while all other tiles have weight $1$.
A hexagon tiling $\mathcal T$ then has the weight
\begin{align*} \mathcal W(\mathcal T) = \prod_{T \in \mathcal T} 
\mathcal W(T)  = \prod_{j=1}^L \alpha_j^{N_j(\mathcal T)} \end{align*}
where $N_j(\mathcal T)$ is the number of square tiles in
the $j$th column of  $\mathcal T$ at an even height. 
This is a $1 \times 2$-periodic tiling of the hexagon, as the weights only depend on the height of the lozenge (that is, the vertical direction).
See the right panel of \cref{fig:ABC hexagon}
where the square tiles with weight $\alpha_j$ are highlighted.
We introduce a probability measure $\prob$ on the set of all tilings of a hexagon of fixed size by setting
\begin{align}
\prob(\mathcal{T}) = \frac{\mathcal{W}(\mathcal{T})}{\sum_{\mathcal{T}'} \mathcal{W}(\mathcal{T}')},
\end{align}
where the sum runs over all possible tilings $\mathcal T'$ 
of the hexagon.

There is a completely analogous weighting which is $2 \times 1$
periodic. Here the weight of the square tile depends on the
parity of the horizontal coordinate instead of the
vertical coordinate.  
The two weightings are equivalent and show the same phenomena
in the large size limit. 
The main phenomenon that was discovered is illustrated
in \cref{fig:low alpha tiling}.
In the large $n$
limit the pattern of tiles is fixed in certain regions near the
corners of the hexagon, where only one type of lozenge is present,
as well as in a region in the middle with two types of lozenges.
These regions are called the solid regions. The remaining
part of the hexagon is referred to as a liquid region.
All three types of lozenges are present in the liquid region,
and they do not appear in a regular pattern.

The solid region with two types of lozenges can connect two opposite sides of the 
hexagon as it is the case in \cref{fig:low alpha tiling}
or it can happen that it consists of two disjoint parts.
In the former case the liquid region consists of two disjoint
pieces, while in the latter case the liquid region 
is connected. There is a transition between the two cases
which depends on the parameter $\alpha > 0$. The critical
parameter is $\alpha = 1/9$ as shown in 
\cite{charlier2019periodic}.

\subsection{Systems of non-intersecting paths}
\label{subsec:tilings2}

A lozenge tiling of the hexagon can be alternatively viewed
as a non-intersecting path system. The paths correspond
to level lines for the height function for boxes 
stacked in a corner.
The non-intersecting paths are obtained by drawing 
diagonal lines on two of the three types of lozenges as
shown in \cref{fig:lines on boxes of same height}.
This gives us a bijection between tilings of an $ABC$-hexagon 
and non-intersecting path systems on a
directed graph $\mathcal{G} = (\mathbb Z^2, E)$
with starting points at $A$ consecutive 
points $(0,0), \ldots, (0,A-1)$ and ending points
at $(B+C,C), \ldots, (B+C,A+C-1)$. The edge set $E$
consists of directed edges  $((x,y), (x+1,y))$ and $((x,y), (x+1,y+1))$ 
with $(x,y) \in \mathbb Z^2$.
See \cref{fig:lines on boxes of same height}
for the non-intersecting path system corresponding
to the tiling of \cref{fig:ABC hexagon}.
The weights $\alpha_j$ on the square tiles in the
$1\times 2$ periodic setting correspond to edge weights on the horizontal
edges at even numbered heights. Note that we
applied a shift $y \mapsto y-1/2$ to the
vertical coordinate.

By then putting particles on the paths as also 
indicated in \cref{fig:lines on boxes of same height}, we obtain a multi-level particle system.
%(see also \cite{charlier2020doubly, duits2017periodic}). \todo{Earlier references}
A hexagon of size $ABC$ then has $B+C+1$ levels $0,1, \ldots, B+C$
  with $A$ particles on each level.
The vertical positions of the particles on the $m$th 
level will be denoted by
\[
y_0^{(m)} < y_1^{(m)} < \cdots < y_{A-1}^{(m)}
\]
with fixed starting positions $y_j^{(0)} = j$ and ending positions
$y_j^{(L)} = C+j$ for $j=0, \ldots, A-1$, 
at levels $0$ and $B+C$, respectively. 

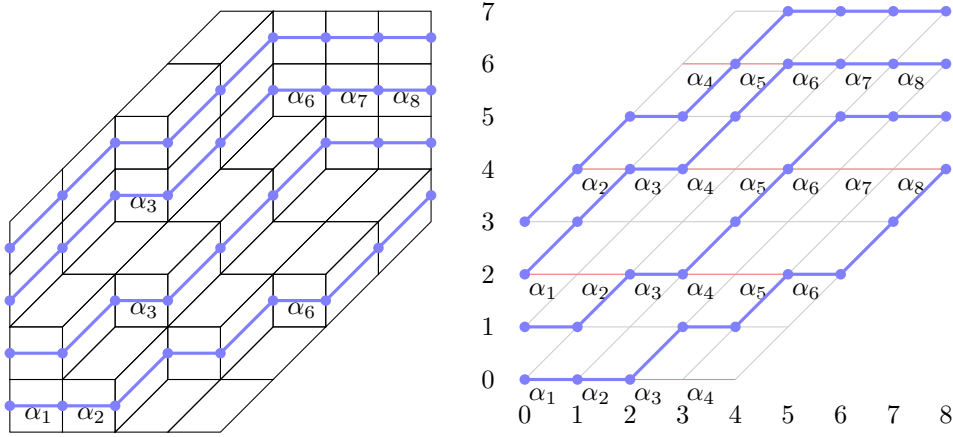
\begin{figure}[t]
	\centering
	\begin{tikzpicture}[scale=0.7]
	%\draw[help lines] (0,0) grid (8,8);
	
	% lower left vertex height 0
	\draw[draw=black] (0,0) -- (1,0) node[above left]{$\alpha_1$} -- (1,1) -- (0,1) -- cycle;
	\draw[draw=black] (1,0) -- (2,0) node[above left]{$\alpha_2$} -- (2,1) -- (1,1) -- cycle;
	\draw[draw=black] (2,0) -- (3,1) -- (3,2) -- (2,1) -- cycle;
	\draw[draw=black] (2,0) -- (3,0) -- (4,1) -- (3,1) -- cycle;
	\draw[draw=black] (3,0) -- (4,0) -- (5,1) -- (4,1) -- cycle;
	
	% lower left vertex height 1
	\draw (0,1) -- (1,1) -- (1,2) -- (0,2) -- cycle;
	\draw (1,1) -- (2,2) -- (2,3) -- (1,2) -- cycle;
	\draw (1,1) -- (2,1) -- (3,2) -- (2,2) -- cycle;
	\draw (3,1) -- (4,1) -- (4,2) -- (3,2) -- cycle;
	\draw (4,1) -- (5,2) -- (5,3) -- (4,2) -- cycle;
	\draw (4,1) -- (5,1) -- (6,2) -- (5,2) -- cycle;
	
	% lower left vertex height 2
	\draw (0,2) -- (1,3) -- (1,4) -- (0,3) -- cycle;
	\draw (0,2) -- (1,2) -- (2,3) -- (1,3) -- cycle;
	\draw (2,2) -- (3,2) node[above left]{$\alpha_3$} -- (3,3) -- (2,3) -- cycle;
	\draw (3,2) -- (4,3) -- (4,4) -- (3,3) -- cycle; 
	\draw (3,2) -- (4,2) -- (5,3) -- (4,3) -- cycle;
	\draw (5,2) -- (6,2) node[above left]{$\alpha_6$} -- (6,3) -- (5,3) -- cycle;
	\draw (6,2) -- (7,3) -- (7,4) -- (6,3) -- cycle;
	
	% lower left vertex height 3
	\draw (0,3) -- (1,4) -- (1,5) -- (0,4) -- cycle;
	\draw (1,3) -- (2,4) -- (2,5) -- (1,4) -- cycle;
	\draw (1,3) -- (2,3) -- (3,4) -- (2,4) -- cycle;
	\draw (2,3) -- (3,3) -- (4,4) -- (3,4) -- cycle;
	\draw (4,3) -- (5,4) -- (5,5) -- (4,4) -- cycle;
	\draw (4,3) -- (5,3) -- (6,4) -- (5,4) -- cycle;
	\draw (5,3) -- (6,3) -- (7,4) -- (6,4) -- cycle;
	\draw (7,3) -- (8,4) -- (8,5) -- (7,4) -- cycle;
	
	% lower left vertex height 4
	\draw (1,4) -- (2,5) -- (2,6) -- (1,5) -- cycle;
	\draw (2,4) -- (3,4) node[above left]{$\alpha_3$} -- (3,5) -- (2,5) -- cycle;
	\draw (3,4) -- (4,5) -- (4,6) -- (3,5) -- cycle;
	\draw (3,4) -- (4,4) -- (5,5) -- (4,5) -- cycle;
	\draw (5,4) -- (6,5) -- (6,6) -- (5,5) -- cycle;
	\draw (5,4) -- (6,4) -- (7,5) -- (6,5) -- cycle;
	\draw (6,4) -- (7,4) -- (8,5) -- (7,5) -- cycle;
	
	% lower left vertex height 5
	\draw (2,5) -- (3,5) -- (3,6) -- (2,6) -- cycle;
	\draw (3,5) -- (4,6) -- (4,7) -- (3,6) -- cycle;
	\draw (4,5) -- (5,6) -- (5,7) -- (4,6) -- cycle;
	\draw (4,5) -- (5,5) -- (6,6) -- (5,6) -- cycle;
	\draw (6,5) -- (7,5) -- (7,6) -- (6,6) -- cycle;
	\draw (7,5) -- (8,5) -- (8,6) -- (7,6) -- cycle;
	
	% lower left vertex height 6
	\draw (2,6) -- (3,6) -- (4,7) -- (3,7) -- cycle;
	\draw (4,6) -- (5,7) -- (5,8) -- (4,7) -- cycle;
	\draw (5,6) -- (6,6) node[above left]{$\alpha_6$} -- (6,7) -- (5,7) -- cycle;
	\draw (6,6) -- (7,6) node[above left]{$\alpha_7$} -- (7,7) -- (6,7) -- cycle;
	\draw (7,6) -- (8,6) node[above left]{$\alpha_8$} -- (8,7) -- (7,7) -- cycle;
	
	% lower left vertex height 7
	\draw (3,7) -- (4,7) -- (5,8) -- (4,8) -- cycle;
	\draw (5,7) -- (6,7) -- (6,8) -- (5,8) -- cycle;
	\draw (6,7) -- (7,7) -- (7,8) -- (6,8) -- cycle;
	\draw (7,7) -- (8,7) -- (8,8) -- (7,8) -- cycle;
	
	% path system lower left upper right
	\draw[very thick, color=blue!50] (0,0.5) -- (1,0.5) -- (2,0.5) -- (3,1.5) -- (4,1.5) -- (5,2.5) -- (6,2.5) -- (7,3.5) -- (8,4.5);
	
	\fill[color=blue!50] (0,0.5) circle (0.1);
	\fill[color=blue!50] (1,0.5) circle (0.1);
	\fill[color=blue!50] (2,0.5) circle (0.1);
	\fill[color=blue!50] (3,1.5) circle (0.1);
	\fill[color=blue!50] (4,1.5) circle (0.1);
	\fill[color=blue!50] (5,2.5) circle (0.1);
	\fill[color=blue!50] (6,2.5) circle (0.1);
	\fill[color=blue!50] (7,3.5) circle (0.1);
	\fill[color=blue!50] (8,4.5) circle (0.1);
	
	\draw[very thick, color=blue!50] (0,1.5) -- (1,1.5) -- (2,2.5) -- (3,2.5) -- (4,3.5) -- (5,4.5) -- (6,5.5) -- (7,5.5) -- (8,5.5);
	
	\fill[color=blue!50] (0,1.5) circle (0.1);
	\fill[color=blue!50] (1,1.5) circle (0.1);
	\fill[color=blue!50] (2,2.5) circle (0.1);
	\fill[color=blue!50] (3,2.5) circle (0.1);
	\fill[color=blue!50] (4,3.5) circle (0.1);
	\fill[color=blue!50] (5,4.5) circle (0.1);
	\fill[color=blue!50] (6,5.5) circle (0.1);
	\fill[color=blue!50] (7,5.5) circle (0.1);
	\fill[color=blue!50] (8,5.5) circle (0.1);
	
	\draw[very thick, color=blue!50] (0,2.5) -- (1,3.5) -- (2,4.5) -- (3,4.5) -- (4,5.5) -- (5,6.5) -- (6,6.5) -- (7,6.5) -- (8,6.5);
	
	\fill[color=blue!50] (0,2.5) circle (0.1);
	\fill[color=blue!50] (1,3.5) circle (0.1);
	\fill[color=blue!50] (2,4.5) circle (0.1);
	\fill[color=blue!50] (3,4.5) circle (0.1);
	\fill[color=blue!50] (4,5.5) circle (0.1);
	\fill[color=blue!50] (5,6.5) circle (0.1);
	\fill[color=blue!50] (6,6.5) circle (0.1);
	\fill[color=blue!50] (7,6.5) circle (0.1);
	\fill[color=blue!50] (8,6.5) circle (0.1);
	
	\draw[very thick, color=blue!50] (0,3.5) -- (1,4.5) -- (2,5.5) -- (3,5.5) -- (4,6.5) -- (5,7.5) -- (6,7.5) -- (7,7.5) -- (8,7.5);
	
	\fill[color=blue!50] (0,3.5) circle (0.1);
	\fill[color=blue!50] (1,4.5) circle (0.1);
	\fill[color=blue!50] (2,5.5) circle (0.1);
	\fill[color=blue!50] (3,5.5) circle (0.1);
	\fill[color=blue!50] (4,6.5) circle (0.1);
	\fill[color=blue!50] (5,7.5) circle (0.1);
	\fill[color=blue!50] (6,7.5) circle (0.1);
	\fill[color=blue!50] (7,7.5) circle (0.1);
	\fill[color=blue!50] (8,7.5) circle (0.1);
	\end{tikzpicture} \quad
	\begin{tikzpicture}[scale=0.7]
	\draw[color=black!20] (0,0.5) -- (4,0.5);
	\draw[color=black!20] (0,1.5) -- (5,1.5);
	\draw[color=black!20] (0,2.5) -- (6,2.5);
	\draw[color=black!20] (0,3.5) -- (7,3.5);
	\draw[color=black!20] (1,4.5) -- (8,4.5);
	\draw[color=black!20] (2,5.5) -- (8,5.5);
	\draw[color=black!20] (3,6.5) -- (8,6.5);
	\draw[color=black!20] (4,7.5) -- (8,7.5);
	
	\draw[color=black!20] (4,0.5) -- (8,4.5);
	\draw[color=black!20] (3,0.5) -- (8,5.5);
	\draw[color=black!20] (2,0.5) -- (8,6.5);
	\draw[color=black!20] (1,0.5) -- (8,7.5);
	
	\draw[color=black!20] (0,0.5) -- (7,7.5);
	\draw[color=black!20] (0,1.5) -- (6,7.5);
	\draw[color=black!20] (0,2.5) -- (5,7.5);
	\draw[color=black!20] (0,3.5) -- (4,7.5);

	\foreach \x in {0,1,2,3,4,5,6,7}
	{	\draw (-1,\x+0.5) node[right]{\x};	
		\draw (\x,-0.5) node[above]{\x};}
	\draw(8,-0.5) node[above]{8};
	
	\foreach \x in {0,2}
	\draw[color=red!50] (0,\x+0.5) -- (0.8,\x+0.5) node[below left, color=black]{$\alpha_1$} -- (1,\x+0.5); 
	\foreach \x in {0,2,4}
	\draw[color=red!50] (1,\x+0.5) -- (1.8,\x+0.5) node[below left, color=black]{$\alpha_2$} -- (2,\x+0.5) ;
	\foreach \x in {0,2,4}
	\draw[color=red!50] (2,\x+0.5) -- (2.8,\x+0.5) node[below left,color=black]{$\alpha_3$} -- (3,\x+0.5);
	\foreach \x in {0,2,4,6}
	\draw[color=red!50] (3,\x+0.5) -- (3.8,\x+0.5) node[below left,color=black]{$\alpha_4$} -- (4,\x+0.5);
	\foreach \x in {2,4,6}
	\draw[color=red!50] (4,\x+0.5) -- (4.8,\x+0.5) node[below left,color=black]{$\alpha_5$} -- (5,\x+0.5);
	\foreach \x in {2,4,6}
	\draw[color=red!50] (5,\x+0.5) -- (5.8,\x+0.5) node[below left,color=black]{$\alpha_6$} -- (6,\x+0.5);
	\foreach \x in {4,6}
	\draw[color=red!50] (6,\x+0.5) -- (6.8,\x+0.5) node[below left,color=black]{$\alpha_7$} -- (7,\x+0.5);
	\foreach \x in {4,6}
	\draw[color=red!50] (7,\x+0.5) -- (7.8,\x+0.5) node[below left,color=black]{$\alpha_8$} -- (8,\x+0.5);

	% path system lower left upper right
	\draw[very thick, color=blue!50] (0,0.5) -- (1,0.5) -- (2,0.5) -- (3,1.5) -- (4,1.5) -- (5,2.5) -- (6,2.5) -- (7,3.5) -- (8,4.5);
	
	\fill[color=blue!50] (0,0.5) circle (0.1);
	\fill[color=blue!50] (1,0.5) circle (0.1);
	\fill[color=blue!50] (2,0.5) circle (0.1);
	\fill[color=blue!50] (3,1.5) circle (0.1);
	\fill[color=blue!50] (4,1.5) circle (0.1);
	\fill[color=blue!50] (5,2.5) circle (0.1);
	\fill[color=blue!50] (6,2.5) circle (0.1);
	\fill[color=blue!50] (7,3.5) circle (0.1);
	\fill[color=blue!50] (8,4.5) circle (0.1);
	
	\draw[very thick, color=blue!50] (0,1.5) -- (1,1.5) -- (2,2.5) -- (3,2.5) -- (4,3.5) -- (5,4.5) -- (6,5.5) -- (7,5.5) -- (8,5.5);
	
	\fill[color=blue!50] (0,1.5) circle (0.1);
	\fill[color=blue!50] (1,1.5) circle (0.1);
	\fill[color=blue!50] (2,2.5) circle (0.1);
	\fill[color=blue!50] (3,2.5) circle (0.1);
	\fill[color=blue!50] (4,3.5) circle (0.1);
	\fill[color=blue!50] (5,4.5) circle (0.1);
	\fill[color=blue!50] (6,5.5) circle (0.1);
	\fill[color=blue!50] (7,5.5) circle (0.1);
	\fill[color=blue!50] (8,5.5) circle (0.1);
	
	\draw[very thick, color=blue!50] (0,2.5) -- (1,3.5) -- (2,4.5) -- (3,4.5) -- (4,5.5) -- (5,6.5) -- (6,6.5) -- (7,6.5) -- (8,6.5);
	
	\fill[color=blue!50] (0,2.5) circle (0.1);
	\fill[color=blue!50] (1,3.5) circle (0.1);
	\fill[color=blue!50] (2,4.5) circle (0.1);
	\fill[color=blue!50] (3,4.5) circle (0.1);
	\fill[color=blue!50] (4,5.5) circle (0.1);
	\fill[color=blue!50] (5,6.5) circle (0.1);
	\fill[color=blue!50] (6,6.5) circle (0.1);
	\fill[color=blue!50] (7,6.5) circle (0.1);
	\fill[color=blue!50] (8,6.5) circle (0.1);
	
	\draw[very thick, color=blue!50] (0,3.5) -- (1,4.5) -- (2,5.5) -- (3,5.5) -- (4,6.5) -- (5,7.5) -- (6,7.5) -- (7,7.5) -- (8,7.5);
	
	\fill[color=blue!50] (0,3.5) circle (0.1);
	\fill[color=blue!50] (1,4.5) circle (0.1);
	\fill[color=blue!50] (2,5.5) circle (0.1);
	\fill[color=blue!50] (3,5.5) circle (0.1);
	\fill[color=blue!50] (4,6.5) circle (0.1);
	\fill[color=blue!50] (5,7.5) circle (0.1);
	\fill[color=blue!50] (6,7.5) circle (0.1);
	\fill[color=blue!50] (7,7.5) circle (0.1);
	\fill[color=blue!50] (8,7.5) circle (0.1);
	\end{tikzpicture}
	\caption{\label{fig:lines on boxes of same height} 
		The non-intersecting path system on a directed graph corresponding
		to the hexagon tiling of \cref{fig:ABC hexagon}.
		The $1\times 2$ periodic weighting corresponds to
		weights $\alpha$ on the horizontal edges at 
		even height. All other edges have weight $1$.}
\end{figure}

To any weighting of the edges of the directed graph
we associate transition matrices $T_m : \mathbb Z \times \mathbb Z \to \mathbb R$ for $m \in \mathbb Z$ where
$ T_m(x,y)$  is equal to the  weight on the edge from $(m,x)$ to $(m+1,y)$ if there is such an edge, and zero otherwise.

Then it is a general fact, which is a consequence
of the Lindstr\"om-Gessel-Viennot lemma 
\cite{gessel_viennot, lindstrom} that the multi-level particle system coming from an  $ABC$-hexagon  has the joint probability measure given by a product of determinants
\begin{multline}\label{eq:joint probability distribution}
P\left(\left(y_j^{(m)}\right)_{j=0,m=0}^{A-1,B+C}\right) 
\\ = \frac{1}{Z_{A,B,C}} \prod_{j=0}^{A-1}  \delta_j\left(y_j^{(0)}\right) \cdot \prod_{m=0}^{B+C-1} 
\det \left[T_m\left(y_j^{(m)},y_k^{(m+1)}\right)\right]_{j,k = 0}^{A-1} \cdot \prod_{j=0}^{A-1} \delta_{C+j} \left(y_j^{(B+C)}\right).
\end{multline}
The probability measure \eqref{eq:joint probability distribution}
is determinantal by the Eynard-Mehta theorem \cite{eynard_mehta} (see also \cite[Theorem 4.3]{duits2017periodic})
with a correlation kernel that has an explicit  double
sum formula.

\subsection{Periodic transition matrices} \label{subsec:tilings3}
One of the contributions of \cite{duits2017periodic} is
to rewrite the correlation kernel of the determinantal
point process as a double contour integral in case all
transition matrices $T_m$ are periodic, in the sense that
\[ T_m(x+p,y+p) = T_m(x,y), \qquad (x,y) \in \mathbb Z^2 \]
for some integer $p \geq 1$. 
The MVOP appear in this formula and they have size $p \times p$
if the transition matrices are $p$-periodic.  
The orthogonality weights come from the symbols of the
transition matrices.

In the situation of the  two periodic weighting
with parameters $\alpha_1, \ldots, \alpha_{B+C}$, the transition 
matrices $T_m : \Z \times \Z \to \R$ 
are $2$-periodic  
\begin{align}
T_m(x,y) = \begin{cases}
\alpha_j, &\text{if $m=j-1$, and $y = x$ is even}, \\
1, &\text{if $m=j-1$, and $y = x$ is odd or if $y = x+1$}, \\
0, &\text{otherwise}.
\end{cases}
\end{align}
The symbol of $T_m$ is given by
\begin{align}\label{eq:matrix symbol}
W_{\alpha}(z) = \begin{pmatrix}
\alpha & 1 \\
z & 1
\end{pmatrix}, \qquad \text{with $\alpha = \alpha_j$, $j=m+1$}
\end{align}
and these will enter into the matrix-valued weight function.

The formalism of \cite{duits2017periodic} 
then assumes that the sides $A$ and $C$ of the hexagon are
even, say $A=2k$, $C=2c$.
The matrix-valued orthogonality weight then is
\begin{equation} \label{eq:weightABC} 
	\frac{W_{\alpha_1}(z) W_{\alpha_2}(z) \cdots W_{\alpha_{B+C}}(z)}{z^{k+c}}, 
	\end{equation}
and Lemma 4.8 of \cite{duits2017periodic} then states that
a unique monic matrix-valued polynomial $P_k$ of degree $k$
exists that is orthogonal with respect to the above
weight matrix on a closed contour $\gamma$ going
around $0$ once in the counterclockwise direction. 
This leads to the proof of the existence of MVOP.

\subsection{Proof of \cref{prop:exist}} \label{subsec:tilings4}

\begin{proof}
Suppose an integer $k \geq 0$ is given, as well
as integers $K \geq k$ and  $L \geq 2(K-k)$. 
We take 
\[ A = 2k, \quad  c = K-k, \quad C=2c, \quad \text{and} \quad B = L-C. \]
Then $A,B,C$ are non-negative integers with $A$ and $C$ even.
With these parameters the matrix weight
\eqref{eq:weightABC} is equal to \eqref{eq:weightW}
and therefore the MVOP $P_k$ of degree $k$ uniquely exists,
due to \cite[Lemma 4.8]{duits2017periodic}.
\end{proof}

\section{Proof of \cref{thm:theorem1}}\label{sec:proof of main result}

\begin{proof}
(a)
Let 
\begin{equation} \label{eq:Q2k} 
	Q_{2k}(\zeta) = P_k(\zeta^2-\beta^2)
	\begin{pmatrix} 1 & 1 \\ \zeta + \beta & - \zeta+ \beta
	\end{pmatrix}. 
	\end{equation}
Then the first column of $Q_{2k}$ has the entries 
\begin{equation} \label{eq:q2k}
\begin{aligned} 
	q_{2k}(\zeta)  := \left( Q_{2k}\right)_{11}(\zeta) & =  \left(P_k\right)_{11}(\zeta^2 - \beta^2)
		+ \left(P_k\right)_{12}(\zeta^2 - \beta^2)
			(\zeta+\beta), \\ 
	q_{2k+1}(\zeta) := \left(Q_{2k}\right)_{21}(\zeta) & =  \left(P_k\right)_{21}(\zeta^2 - \beta^2)
	+ \left(P_k\right)_{22}(\zeta^2 - \beta^2)
	(\zeta+\beta),
\end{aligned} \end{equation} 
and these are easily seen to be monic polynomials
of respective degrees $2k$ and $2k+1$, since $P_k$ is 
a monic matrix valued polynomial of degree $k$.
From \eqref{eq:Q2k} it is also immediate that the second
column contains the entries $q_{2k}(-\zeta)$ and $q_{2k+1}(-\zeta)$ with the same polynomials \eqref{eq:q2k}. 
Thus \eqref{eq:Pkformula1} holds
and it remains to check the orthogonality \eqref{eq:scalarOP1}.
	
The matrix orthogonality of $P_k$ with weight \eqref{eq:matrixW1} yields
\begin{align} \label{eq:Pkintegral}
	\frac{1}{2 \pi i} \oint_{\gamma} P_{k}(z) W_\alpha(z)^{L} 
	z^{j-K} d z = 0_2, \quad j = 0, 1, \ldots, k-1,
	\end{align}
where we choose for $\gamma$ the circle around $-\beta^2$ of radius $R^2 > \beta^2$. 
Recall the spectral decomposition $W_{\alpha} = E \Lambda E^{-1}$ from \eqref{eq:Walpha} with
\begin{align} \label{eq:LambdaE}
	\Lambda(\zeta) & = \begin{pmatrix}
	\zeta + \alpha +\beta & 0 \\ 0 & -\zeta + \alpha + \beta \end{pmatrix}, \qquad 
	 E(\zeta)  = \begin{pmatrix} 1 & 1 \\ \zeta +\beta & 
	-\zeta + \beta \end{pmatrix}, 
	\end{align}
and $\zeta = (z+\beta^2)^{1/2}$, $\Re \zeta > 0$,
see \eqref{eq:Lambda} and \eqref{eq:Ealpha}. Hence by \eqref{eq:Q2k}, \eqref{eq:Walpha}, \eqref{eq:LambdaE},
\begin{align*} 
	P_k(z) W_{\alpha}(z)^L  & =
	Q_{2k}(\zeta)  \Lambda(\zeta)^L E^{-1}(\zeta) = 
	Q_{2k}(\zeta) \Lambda(\zeta)^L \frac{1}{2\zeta}
		\begin{pmatrix} \zeta & 1 \\
		\zeta & -1 \end{pmatrix} 
		\begin{pmatrix} 1 & 0 \\  -\beta & 1 
		\end{pmatrix}.
	\end{align*}
Performing the change of variable $z = \zeta^2 - \beta^2$ 
with $d z = 2 \zeta d \zeta$ in \eqref{eq:Pkintegral}, 
we obtain
\begin{align} \label{eq:Q2kintegral}
	\frac{1}{2 \pi i} \int_{\gamma_R^+} Q_{2k}(\zeta) 
	\Lambda(\zeta)^L  
		\begin{pmatrix} \zeta & 1 \\
	\zeta & -1 \end{pmatrix} 
	\begin{pmatrix} 1 & 0 \\ -\beta & 1 
	\end{pmatrix}
		(\zeta^2-\beta^2)^{j-K} 
	 d \zeta = 0_2, \quad j = 0,1,\ldots,k-1,
	\end{align}
	where $\gamma_R^+$ denotes the semicircle $|\zeta| = R$,
	$\Re \zeta > 0$.  
The constant matrix $\begin{pmatrix} 1 & 0 \\
-\beta & 1 \end{pmatrix}$ can be dropped from  \eqref{eq:Q2kintegral} as it disappears if we
multiply by its inverse. Then the matrix identity \eqref{eq:Q2kintegral} 
results in four integrals that are equal to zero, and
each integral contains the scalar weight $w_{\alpha,K,L}$
from \eqref{eq:scalarwKL}.
Using \eqref{eq:Q2k}, \eqref{eq:Pkformula1}, \eqref{eq:LambdaE},
and \eqref{eq:scalarwKL}, we obtain the following integrals containing either $q_{2k}$ or $q_{2k+1}$,
\begin{align}\label{eq:equations obtained from matrix orthogonality}
\begin{aligned}
	\frac{1}{2 \pi i} \int_{\gamma_R^+} \left[q_{2k+\epsilon}(\zeta) w_{\alpha,K,L}(\zeta) + q_{2k+\epsilon}(-\zeta) w_{\alpha,K,L}(-\zeta) \right] \,
	\zeta (\zeta^2 - \beta^2)^j d \zeta&= 0, \\
	\frac{1}{2 \pi i} \int_{\gamma_R^+} \left[q_{2k+\epsilon}(\zeta) w_{\alpha,K,L}(\zeta) - q_{2k+\epsilon}(-\zeta) w_{\alpha,K,L}(-\zeta)\right] \, (\zeta^2 - \beta^2)^j d \zeta &= 0, 
\end{aligned}
\end{align}
for $\epsilon = 0,1$, and $j=0, 1, \ldots, k-1$. 
	
The integrals in \eqref{eq:equations obtained from matrix orthogonality} are turned into integrals over the full
circle $\gamma_R$ by writing them as a sum of two integrals
and changing variables $\zeta \to -\zeta$ in the second one.
We obtain for $j=0,1, \ldots, k-1$, and $\epsilon = 0,1$,
\begin{equation}	
	\label{eq:orthogonality q11}
	\begin{aligned}
\frac{1}{2 \pi i} \oint_{\gamma_R} q_{2k+\epsilon}(\zeta) w_{\alpha,K,L}(\zeta)  \zeta (\zeta^2 - \beta^2)^j d \zeta & = 0, \\
\frac{1}{2 \pi i} \oint_{\gamma_R} q_{2k+\epsilon}(\zeta) w_{\alpha,K,L}(\zeta)  (\zeta^2 - \beta^2)^j d \zeta & = 0.
	\end{aligned}
\end{equation}
Observe the extra factor $\zeta$ in the first integral of \eqref{eq:orthogonality q11}.
The $2k$ polynomials $\zeta \mapsto \zeta(\zeta^2 - \beta^2)^j$, $\zeta \mapsto (\zeta^2 - \beta^2)^j$, $j=0, \ldots, k-1$, are linearly independent, since we have a unique
polynomial in this set for each degree from $0$ up to $2k-1$.
Thus they are a basis of the vector space of all polynomials
of degree $\leq 2k-1$. 
We then obtain \eqref{eq:scalarOP1} by taking suitable linear combinations of 
the identities in \eqref{eq:orthogonality q11}, which completes
the proof of part (a).

\medskip 
(b)
Suppose $q_{2k}$ and $q_{2k+1}$ are two monic polynomials 
of degrees $2k$ and $2k+1$, respectively that satisfy 
\eqref{eq:scalarOP1} and define $Q_{2k}$ as
\[ Q_{2k}(\zeta) = \begin{pmatrix} q_{2k}(\zeta) & q_{2k}(-\zeta) \\
	q_{2k+1}(\zeta) & q_{2k+1}(-\zeta) \end{pmatrix}.	\]
	
Then, for any $c \in \C$, the matrix valued function $P_k$ 
as defined in \eqref{eq:scalarOP2} satisfies the orthogonality 
\eqref{eq:MVOP1} with weight matrix \eqref{eq:matrixW1},
simply by reversing the arguments from part (a).
From the second line of \eqref{eq:scalarOP2} it is clear that
$P_k$ is a polynomial in the variable $z$, see also the remark
following the statement of Theorem~\ref{thm:theorem1}.
The diagonal entries of $P_k$ are monic polynomials of degree $k$,
and the $(1,2)$-entry is a polynomial of degree $\leq k-1$.
The $(2,1)$-entry is a polynomial of degree $\leq k$.

Suppose 
\[ q_{2k+1}(\zeta) = \zeta^{2k+1} + a \zeta^{2k} + O(\zeta^{2k-1}) \]
as $\zeta \to \infty$. Then
\[\frac{q_{2k+1}(\zeta) + q_{2k+1}(-\zeta)}{2} 
	= a \zeta^{2k} + O (\zeta^{2k-2}) 
	\]
and it follows from \eqref{eq:scalarOP2} that 
\[ \left(P_k \right)_{21}(z) = (c + a - \beta ) z^k + 
	O(z^{k-1}) \quad \text{as } z \to \infty. \]
The choice for $c$ in \eqref{eq:constantc} then guarantees that $c+a - \beta = 0$
and therefore the $(2,1)$ entry of $P_k$ has degree $\leq k-1$.
Thus $P_k$ defined by \eqref{eq:scalarOP2} is a monic matrix valued
polynomial with the required matrix orthogonality. 
Part (b) then follows because of the uniqueness of the MVOP,
see also \cref{prop:exist}.
\end{proof}

\section{Proofs of \cref{prop:zerosqnn}
and \cref{thm:zero distribution determinant}}
\label{sec:strong asymptotics}

We prove part (a) of \cref{prop:zerosqnn} via the 
Deift-Zhou steepest
descent analysis of the Riemann-Hilbert problem (or RH problem) for scalar orthogonal polynomials.
It is well-known that the monic scalar orthogonal polynomials can be expressed in terms of a RH problem \cite{fokas_its_kitaev}.
The Deift-Zhou steepest descent analysis was first performed in \cite{deift_zhou_steepest_descent} and its application to
orthogonal polynomials has been well-developed by now. 
We refer to \cite{deift_orthogonal_1999, deift_uniform_1999} or \cite[Section 2.4]{bleher_liechty} for more background information.

In our case, the method works smoothly for $q_{n,n}$ but we need a little 
twist to make it work for $q_{n+k,n}$ with $k \neq 0$. 
This is the reason why we give a detailed account here.

\subsection{The Riemann-Hilbert problem for $q_{n+k,n}$}

The critical trajectory $\Sigma_0$ is analytically continued to 
a closed contour $\gamma_0$ around $0$ as in \cref{fig:helpful picture for definition}. The additional part is an orthogonal trajectory of
the quadratic differential $Q_{\alpha} dz^2$. Then $\gamma_0$
surrounds the points $\pm \beta$, see also 
\cite[section 4]{charlier2019periodic}.
Thus $\gamma_R$ can be deformed to $\gamma_0$ in \eqref{eq:scalarqkn} without crossing the poles,
and $q_{n+k,n}$ is also characterized by the orthogonality
\[ \frac{1}{2\pi i} \oint_{\gamma_0} q_{n+k,n}(z)
z^j  \frac{(z+\alpha+\beta)^{2n}}{(z^2-\beta^2)^n} dz = 0,
\qquad j =0,1, \ldots, n+k-1. \]
The closed contour $\gamma_0$ is positively (counterclockwise)
oriented. As usual in RH problems the $+$ side
is on the left and the $-$ side is on the right when traversing
the contour according to its orientation.

\begin{figure}
	\centering
%	\begin{overpic}[scale=0.7]{./images/completed_curve_alpha_0p20.png}
\begin{overpic}[scale=0.7]{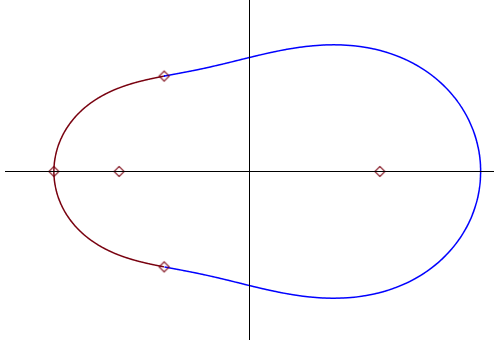}
	\put(83,57){$\Sigma_0$}
	\put(8,43){$\gamma_0$}
	\put(31,55){$z_+$}
	\put(30,10){$z_-$}
	\put(0,28){$-\alpha-\beta$}
	\put(20,28){$-\beta$}
	\put(75,28){$\beta$}	
	\end{overpic}
	\caption{\label{fig:helpful picture for definition} 
		$\Sigma_0$ (in blue) has analytic continuation to a closed
		contour $\gamma_0$ surrounding $\pm \beta$, 
		where $\gamma_0 \setminus \Sigma_0$
		consists of orthogonal trajectories connecting
		$z_{\pm}$ with $-\alpha-\beta$.}
\end{figure}

\begin{rhp} \label{rhp:Y}
 $Y$ is a $2 \times 2$ matrix-valued function that
 satisfies 
	\begin{itemize}
		\item $Y : \C \setminus \gamma_0 \to \C^{2 \times 2}$ is analytic,
		\item $Y$ has a jump on $\gamma_0$ given by
		\begin{align}
			Y_+(z) = Y_-(z) \begin{pmatrix}
				1 & w_\alpha(z)^{n} \\ 0 & 1
			\end{pmatrix} \quad \text{ for } z \in \gamma_0,
		\end{align}
		where $Y_{\pm}$ denote the limiting values of $Y$ on the $\pm$ sides of $\gamma_0$, and 
		\[ w_{\alpha}(z) = \frac{(z+\alpha+\beta)^{2}}{(z^2-\beta^2)}. \]
		\item $Y(z) = (I_2 + \bigO(z^{-1})) \begin{pmatrix}
			z^{n+k} & 0 \\ 0 & z^{-n-k}
		\end{pmatrix}$ as $z \to \infty$.
	\end{itemize}	
\end{rhp}

Fokas, Its and Kitaev first established in \cite{fokas_its_kitaev} that
\cref{rhp:Y} has a solution in terms of the scalar orthogonal polynomials $q_{n+k,n}$. The solution to the RH problem exists
if and only if $q_{n+k,n}$ uniquely exists, and in that case 
\begin{align}
	Y(z) = \begin{pmatrix}
		q_{n+k,n}(z) & \ds \frac{1}{2 \pi i} \oint_{\gamma_0} \frac{q_{n+k,n}(s) w_\alpha(s)^{n}}{s - z} \d s \\
		\widehat{q}_{n+k-1,n}(z) & \ds  \frac{1}{2 \pi i} \oint_{\gamma_0} \frac{\widehat{q}_{n+k-1,n}(s) w_\alpha(s)^{n}}{s - z} \d s
	\end{pmatrix}
\end{align}
for some polynomial $\widehat{q}_{n+k-1,n}$ of degree
$\leq n+k-1$. If $\widehat{q}_{n+k-1,n}$ has exact
degree $n+k-1$, then it is a multiple of the
monic orthogonal polynomial of degree $n+k-1$,
and $q_{n+k-1,n}$ uniquely exists as well. However, it is 
possible that the degree of $\widehat{q}_{n+k-1,n}$ is less than $n+k-1$. 

A consequence of the steepest descent analysis will be that, 
for a fixed $k \in \mathbb Z$, the \cref{rhp:Y} is solvable for $n$ large enough. Hence the polynomial $q_{n+k,n}$ exists for $n$ large enough.
\subsection{Steepest descent analysis}

\subsubsection{First transformation $Y \mapsto T$}

Note that there is a discrepancy between the power of the weight function in the jump of $Y$ and the degree of $z^{\pm (n+k)}$ in the asymptotics of $Y(z)$ as $z \to \infty$.
The transformation $Y \mapsto T$ will not depend on $k$,
and it leads to a RH problem that is not normalized
at infinity, as we will see.  We use the $g$-function
\eqref{eq:gzdef}, the function
\begin{align} \label{eq:Valpha} V_{\alpha}(z) = - \log w_{\alpha}(z)
	= \log(z^2-\beta^2) - 2\log(z+\alpha+\beta),
\end{align} 
as well as the following lemma.

\begin{lemma} Let $\frac{1}{9} < \alpha \leq 1$, and let $
	\Sigma_0$ and $\gamma_0$ be as in \cref{fig:helpful picture for definition}. Then there is
	a constant $\ell$ such that
	\begin{equation} \label{eq:gjumps}
		g_+(z) + g_-(z) - V_{\alpha}(z) \begin{cases}
		= - \ell, &  z \in \Sigma_0, \\
		< - \ell, &  z \in \gamma_0 \setminus \Sigma_0.
	\end{cases} \end{equation}
	In addition
	\begin{equation}
		g_+(z) - g_-(z) = 2 \phi_{+}(z), \qquad z \in \Sigma_0,
	\end{equation}
	where $\phi$ is defined by 
	\begin{equation} \label{eq:phi} 
	\phi(z) = 	\int_{z_+}^z Q_{\alpha}(s)^{1/2} ds 
	\end{equation}
	and $\phi$ satisfies
	\begin{equation} \label{eq:phiidentity} 
		\phi(z) =  g(z) - \frac{V_{\alpha}(z)}{2} + 
		\frac{\ell}{2}, \qquad z \in \mathbb C \setminus 
		(\Sigma_0 \cup (-\infty, x^*]) \end{equation}
	where $x^*$ is the point of intersection of $\Sigma_0$
	with the positive real line.
\end{lemma}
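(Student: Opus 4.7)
My plan is to establish the identity \eqref{eq:phiidentity} first; the equality on $\Sigma_0$ and the jump formula in \eqref{eq:gjumps} then follow by routine algebra, and the strict inequality on $\gamma_0 \setminus \Sigma_0$ reduces to an S-property for $\Sigma_0$.

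For the identity, consider $F(z) := g(z) - V_\alpha(z)/2 - \phi(z)$ on $\mathbb{C} \setminus (\Sigma_0 \cup (-\infty, x^*])$ and show that $F$ is constant; the constant then defines $\ell/2$. The derivative $F'(z) = g'(z) - V_\alpha'(z)/2 - Q_\alpha(z)^{1/2}$ should be shown to vanish identically.  First, $F'$ has no jump across $\Sigma_0$: the Sokhotski--Plemelj jump of the Cauchy transform $g'(z) = \int d\mu_0(s)/(z-s)$, with density $d\mu_0 = Q_\alpha^{1/2}\, ds/(\pi i)$ in the branch that makes $\mu_0$ positive, cancels exactly against the jump $Q_{\alpha,+}^{1/2} = -Q_{\alpha,-}^{1/2}$ of $Q_\alpha^{1/2}$ across its branch cut $\Sigma_0$.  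Second, the simple poles of $V_\alpha'/2$ at $\pm\beta$ and $-\alpha-\beta$, with residues $\tfrac{1}{2}, \tfrac{1}{2}, -1$, are cancelled by the residues of $Q_\alpha^{1/2}$ at the same points; this is equivalent to $\mu_0$ being the equilibrium measure for the external field $V_\alpha/2$ on $\Sigma_0$, which I would inherit from \cite{charlier2019periodic} via the affine change of variable $z \mapsto -z-\alpha-\beta$ and identity \eqref{eq:QvsQtilde}.  Third, at infinity $g'(z)\sim 1/z$, $V_\alpha'(z)/2 = O(1/z^2)$, and $Q_\alpha^{1/2}(z)\sim 1/z$ with leading coefficient $1$, so $F'(z) = O(1/z^2)$; Liouville then gives $F'\equiv 0$.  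A monodromy check around the logarithmic singularities of $\phi$ at the three poles of $Q_\alpha^{1/2}$, matched against the log periods of $g$ coming from $\log(z-s)$, shows that $F$ takes the same constant value on every connected component of its domain.

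With the identity in hand, the relation $Q_{\alpha,+}^{1/2} = -Q_{\alpha,-}^{1/2}$ on $\Sigma_0$, integrated from $z_+$ where $\phi_\pm(z_+) = 0$, gives $\phi_+(z) + \phi_-(z) = 0$ on $\Sigma_0$.  Substituting into $g = \phi + V_\alpha/2 - \ell/2$ and using the continuity of $V_\alpha$ across $\Sigma_0$ yields $g_+ + g_- - V_\alpha = -\ell$ and $g_+ - g_- = 2\phi_+$, as required.  On $\gamma_0\setminus\Sigma_0$, the function $g$ is analytic (there is no mass of $\mu_0$ off $\Sigma_0$), so $g_+ + g_- - V_\alpha = 2\phi - \ell$ and the strict inequality becomes $\Re \phi(z) < 0$.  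Since $\gamma_0\setminus\Sigma_0$ consists of orthogonal trajectories of $Q_\alpha(z)\,dz^2$, along which $\Re\phi$ is constant, the substance of the inequality is that this constant is strictly negative on each such trajectory, which I would import from the analogous S-property for $\widehat Q_\alpha$ in \cite[Lemma 4.5]{charlier2019periodic} via the affine map $z \mapsto -z-\alpha-\beta$ (this map preserves trajectories, orthogonal trajectories, and the sign of the corresponding $\phi$-function).

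The main obstacle is precisely this strict inequality: a naive reading gives $\Re\phi = 0$ at the endpoints $z_\pm$ and hence $\Re\phi \equiv 0$ along the orthogonal trajectories, which would only yield $\le -\ell$ rather than $< -\ell$.  Making the sign strict requires a careful accounting of how the logarithmic singularity of $\phi$ at $-\alpha-\beta$ (a pole of $Q_\alpha^{1/2}$ with real-positive residue) combines with the crossing of the cut $(-\infty, x^*]$ to shift the constant; re-deriving this from scratch is delicate, and invoking the corresponding S-property in \cite{charlier2019periodic} through the affine correspondence is by far the cleanest route.
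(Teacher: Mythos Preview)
The paper's own proof consists of a single sentence pointing to \cite[Proposition~4.4]{charlier2019periodic}, so your sketch is far more detailed than what the paper provides.  The Liouville argument for the identity \eqref{eq:phiidentity}, the derivation of the jump relations on $\Sigma_0$, and the final appeal to \cite{charlier2019periodic} via the affine change $z \mapsto -z-\alpha-\beta$ are all sound and match the intended route.

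There is, however, a concrete error in your treatment of the strict inequality.  In the conventions of this paper the critical trajectories $\Sigma_j$ are arcs on which $Q_\alpha(z)\,dz^2 < 0$: this sign is forced by the requirement that $d\mu_j = \tfrac{1}{\pi i}Q_\alpha^{1/2}\,ds$ be a \emph{positive} measure, see \eqref{def:mu}.  The orthogonal trajectories making up $\gamma_0 \setminus \Sigma_0$ therefore satisfy $Q_\alpha(z)\,dz^2 > 0$.  Along them the differential $d\phi = Q_\alpha^{1/2}\,dz$ is \emph{real}, so it is $\Im\phi$ that stays constant (equal to $0$, since $\phi(z_\pm)=0$) while $\Re\phi$ varies monotonically away from $0$.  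Your claim that ``$\Re\phi$ is constant along orthogonal trajectories'' has the roles reversed; that is precisely what happens on $\Sigma_0$ (where $\phi_+$ is purely imaginary, consistent with $g_+-g_-=2\phi_+$), not on $\gamma_0\setminus\Sigma_0$.

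Consequently the ``main obstacle'' you describe is not an obstacle at all.  On $\gamma_0\setminus\Sigma_0$ the function $\phi$ is real-valued, and the strict inequality in \eqref{eq:gjumps} amounts exactly to the sign condition $\phi(z)<0$ there.  This is the ``inequality half'' of the $S$-property, and it is what gets transported from \cite{charlier2019periodic} by the affine map --- your proposal to import it is correct, but there is no delicate accounting of the cut $(-\infty,x^*]$ or of the logarithmic singularity at $-\alpha-\beta$ to perform.
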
 
\begin{proof}
	This is essentially the same as the proof of  
	\cite[Proposition 4.4]{charlier2019periodic}.
\end{proof}
By taking the derivative in \eqref{eq:phiidentity} and using
\eqref{eq:phi} we get that  
\begin{equation} \label{eq:gprimeidentity} 
	2 g'(z) = V_{\alpha}'(z) + 2 Q_{\alpha}(z)^{1/2},
	\qquad z \in \mathbb C \setminus \Sigma_0.
	\end{equation} 
Also note that $\mu_0$ is a critical measure in the sense of Mart\'{i}nez-Finkelshtein 
and Rakhmanov, see \cite{mf_rakhmanov_critical_2011}
and especially Section~5 therein.

We then define
\begin{align} \label{eq:Tdef}
	T(z) = e^{\frac{n \ell}{2} \sigma_3} Y(z) e^{-n g(z) \sigma_3} e^{-\frac{n \ell}{2} \sigma_3 },
	\qquad \quad \sigma_3 = \begin{pmatrix}
		1 & 0 \\ 0 & -1 \end{pmatrix}.
\end{align}
Then $T$ satisfies the following Riemann-Hilbert problem.

\begin{rhp} \label{rhp:T} The matrix valued function $T$ satisfies the following:
	\begin{itemize}
		\item $T : \C \setminus \gamma_0 \to \C^{2 \times 2}$ is analytic,
		\item $T$ has boundary values on $\gamma_0$ that satisfy
		\begin{align}
			T_+(z) &= T_-(z) \begin{pmatrix}
				e^{- 2n \phi_+(z)} & 1 \\ 0 & e^{- 2n \phi_-(z)}
			\end{pmatrix}, &&\text{for $z \in \Sigma_0 \subset \gamma_0$,} \\
			T_+(z) &= T_-(z) \begin{pmatrix}
				1 & e^{2n \phi(z)} \\ 0 & 1
			\end{pmatrix}, &&\text{for $z \in \gamma_0 \setminus \Sigma_0$,}
		\end{align}
		\item $T(z) = \left(I_2 + \bigO(z^{-1})\right) \begin{pmatrix}
			z^k & 0 \\ 0 & z^{-k}
		\end{pmatrix}$ as $z \to \infty$.
	\end{itemize}
\end{rhp}
The jump matrix on $\gamma_0 \setminus \Sigma_0$ tends
to the identity matrix as $n \to \infty$ because
$\phi(z) < 0$ on $\gamma_0 \setminus \Sigma_0$
due to \eqref{eq:phiidentity} and the strict inequality in \eqref{eq:gjumps}.

\subsubsection{Second transformation $T \mapsto S$: opening of lenses}\label{sec:opening of lenses}

We open up lenses around $\Sigma_0$ such that $\Re \phi > 0$ 
on the lips $\gamma_+$ and $\gamma_-$ of the lenses,
except at the endpoints $z_+, z_-$, and we define
\begin{align} \label{eq:Sdef}
	S(z) = T(z) \times \begin{cases}
		\begin{pmatrix}
			1 & 0 \\ -e^{-2n \phi(z)} & 1
		\end{pmatrix} &\text{for $z$ between $\Sigma_0$ and $\gamma_+$}, \\
		\begin{pmatrix}
			1 & 0 \\ e^{-2n \phi(z)} & 1
		\end{pmatrix} &\text{for $z$ between $\Sigma_0$ and $\gamma_-$}, \\
		I_2 &\text{elsewhere}.
	\end{cases}
\end{align}
Then $S$ satisfies the following Riemann-Hilbert problem:
\begin{rhp} \label{rhp:S}
	The function $S$ satisfies:
	\begin{itemize}
		\item $S : \C \setminus (\gamma \cup \gamma_+ \cup \gamma_-) \to \C^{2 \times 2}$ is analytic,
		\item $S$ has boundary values on $\gamma$, $\gamma_+$ and $\gamma_-$ that satisfy
		\begin{align}
			S_+(z) &= S_-(z) \begin{pmatrix}
				1 & 0 \\ e^{-2n \phi(z)} & 1
			\end{pmatrix} &&\text{for $z \in \gamma_+ \cup \gamma_-$}, \\
			S_+(z) &= S_-(z) \begin{pmatrix}
				0 & 1 \\ -1 & 0
			\end{pmatrix} &&\text{for $z \in \Sigma_0$}, \\
			S_+(z) &= S_-(z) \begin{pmatrix}
				1 & e^{2n \phi(z)} \\ 0 & 1
			\end{pmatrix} &&\text{for $z \in \gamma_0 \setminus \Sigma_0$},
		\end{align}
		\item $S(z) = (I_2 + \bigO(z^{-1})) \begin{pmatrix}
			z^k & 0 \\ 0 & z^{-k}
		\end{pmatrix}$ as $z \to \infty$.
	\end{itemize}
\end{rhp}
The jump matrices on $\gamma_{\pm}$ and on $\gamma_0 \setminus \Sigma_0$ tend to the identity matrix as $n \to \infty$.

\subsubsection{Model Riemann-Hilbert problem}

Ignoring the jumps that are exponentially small, we look for
a solution $N$ to the following model Riemann-Hilbert problem:
\begin{rhp} \label{rhp:N} The function $N$ satisfies
	\begin{itemize}
		\item $N : \C \setminus \Sigma_0 \to \C^{2 \times 2}$ is analytic,
		\item $N$ has boundary values on $\Sigma_0$ that satisfy
		\begin{align}
			N_+(z) = N_-(z) \begin{pmatrix}
				0 & 1 \\ -1 & 0
			\end{pmatrix}  \quad \text{ for } z \in \Sigma_0, 
		\end{align}
		\item $N(z) = \left(I_2 + \bigO(z^{-1})\right) \begin{pmatrix}
			z^k & 0 \\ 0 & z^{-k}
		\end{pmatrix}$ as $z \to \infty$.
	\end{itemize}
\end{rhp}
The \cref{rhp:N} depends on the integer $k \in \mathbb Z$
as it appears in the asymptotic condition.

The solution $N_0$ for the case $k=0$ is well-known, namely
\begin{align}\label{eq:standard solution model RHP}
	N_0(z) = \begin{pmatrix}
	\ds	\frac12 (a(z) + a(z)^{-1}) & \ds \frac1{2i} (a(z) - a(z)^{-1}) \\[5pt]
	\ds	-\frac1{2i} (a(z) - a(z)^{-1}) & \ds \frac12 (a(z) + a(z)^{-1})
	\end{pmatrix},
\end{align}
where  $a(z)$ is given by \eqref{eq:az}. Note that 
$(N_0)_{11}(z) = A_0(z)$ where $A_0(z)$ is given by \eqref{eq:A0z}.

\begin{lemma}
 The solution of \cref{rhp:N} takes the form 
\begin{align}\label{eq:presumed solution of model RHP}
	N(z) = \begin{pmatrix} c^k & 0 \\ 0 & c^{-k}  \end{pmatrix} N_0(z) \begin{pmatrix} \left(\frac{\psi(z)}{c}\right)^k & 0 \\ 0 & \left(\frac{\psi(z)}{c} \right)^{-k} \end{pmatrix}
\end{align}
where $c = \frac{z_+-z_-}{4}$ and $\psi(z)$ is given by \eqref{eq:psiz}.
\end{lemma}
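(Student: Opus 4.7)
The plan is to check directly that the ansatz \eqref{eq:presumed solution of model RHP} satisfies each of the three items in \cref{rhp:N}, and to rely on the standard fact that $N_0$ from \eqref{eq:standard solution model RHP} solves the RH problem when $k=0$. The only nonobvious ingredient is a Joukowski-type identity for $\psi$, so I would extract that first.

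First I would record the key property of $\psi$: from the definition \eqref{eq:psiz} and the fact that $((z-z_+)(z-z_-))^{1/2}$ changes sign across $\Sigma_0$, a direct computation shows $\psi_+(z)\psi_-(z) = c^2$ on $\Sigma_0$ with $c = (z_+-z_-)/4$, and that $\psi(z) = z + O(1)$, more precisely $\psi(z)/c = z/c + O(1)$ as $z\to\infty$. Consequently the diagonal matrix
\[ D(z) = \begin{pmatrix} (\psi(z)/c)^k & 0 \\ 0 & (\psi(z)/c)^{-k} \end{pmatrix} \]
is analytic on $\mathbb C\setminus\Sigma_0$, and its boundary values on $\Sigma_0$ satisfy $D_+ = D_-^{-1}$ since $(\psi_+/c)^{\pm k} = (\psi_-/c)^{\mp k}$.

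Next I would verify the jump. Writing $C = \operatorname{diag}(c^k,c^{-k})$ and using $N_{0,+} = N_{0,-} J$ with $J = \begin{pmatrix}0 & 1\\-1 & 0\end{pmatrix}$, the ansatz gives $N_+ = C N_{0,-} J D_+$ and $N_- = C N_{0,-} D_-$, so $N_+ = N_- J$ is equivalent to the matrix identity $J D_+ J^{-1} = D_-$. A one-line computation shows $J D_+ J^{-1} = \operatorname{diag}((\psi_+/c)^{-k},(\psi_+/c)^k)$, which equals $D_-$ precisely because of the identity $\psi_+\psi_- = c^2$.

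Then I would address the asymptotic condition. Combining $N_0(z) = I_2 + O(1/z)$ (which is the standard large-$z$ behavior of \eqref{eq:standard solution model RHP}, using $a(z) = 1 + O(1/z)$) with $(\psi(z)/c)^k = (z/c)^k(1+O(1/z))$, one checks entrywise that
\[ N(z)\begin{pmatrix} z^{-k} & 0 \\ 0 & z^{k}\end{pmatrix} = I_2 + O(1/z), \quad z\to\infty; \]
the off-diagonal entries of $N$ carry the right powers $z^{k-1}$ and $z^{-k-1}$ because the off-diagonal entries of $N_0$ are themselves $O(1/z)$, and the factors $C$ and $(\psi/c)^{\pm k}$ exactly absorb the powers of $c$ into $(z/c)^{\pm k}$, leaving $z^{\pm k}$. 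This is the step that is easiest to get wrong if one is careless about whether the normalization matrix sits on the left or the right, so I would write it out componentwise as a sanity check.

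The main obstacle is not really in any of the steps above, which are mechanical once $\psi_+\psi_- = c^2$ is in hand; rather it is in guessing or justifying uniqueness of $N$. Since $\det N_0$ is entire with $\det N_0(\infty) = 1$ (the jump matrix $J$ has determinant $1$), the same holds for $\det N$ by the product form. Uniqueness follows by the standard argument: if $\widetilde N$ is another solution, then $R := \widetilde N N^{-1}$ has no jump on $\Sigma_0$, is bounded near the endpoints $z_\pm$ (where both $N$ and $\widetilde N$ have at worst fourth-root singularities coming from $a(z)$, which cancel in the ratio), and tends to $I_2$ at infinity, hence $R \equiv I_2$ by Liouville. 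I would make this uniqueness argument once, explicitly, since the factor $\psi^{\pm k}$ could in principle introduce additional singularities at the endpoints; checking that these singularities are integrable (at worst $(z-z_\pm)^{-1/2+k/2}$ type) and handled by the same removability argument as in the $k=0$ case closes the proof.
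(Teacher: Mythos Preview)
Your approach is essentially the same as the paper's: both verify directly that the ansatz satisfies the three items of \cref{rhp:N}, with the identity $\psi_+\psi_- = c^2$ on $\Sigma_0$ and the asymptotics $\psi(z) = z + O(1)$ doing all the work. The paper's proof is terse and omits the uniqueness argument you spell out; your version is simply a more detailed execution of the same idea.

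One small imprecision: your remark that $\psi^{\pm k}$ might introduce endpoint singularities of order $(z-z_\pm)^{-1/2+k/2}$ is unfounded. Since $\psi(z_\pm) = \tfrac{z_+ + z_-}{4} \neq 0$, the factor $(\psi/c)^{\pm k}$ is bounded and nonvanishing near $z_\pm$, so the only local singularities are the fourth-root ones inherited from $N_0$, independently of $k$. This actually simplifies your uniqueness step rather than complicating it.
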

\begin{proof}
	The function $\psi$ is analytic and non-zero on
	$\mathbb C \setminus \Sigma_0$ with boundary
	values satisfying $\psi_+ \psi_- = c^2$ on $\Sigma_0$, as can be 
	easily obtained from \eqref{eq:psiz}. These properties
	guarantee that $N$ is well-defined and analytic outside
	$\Sigma_0$ with the same jump matrix on $\Sigma_0$
	as $N_0$ has.    Also 
	$\psi(z) = z + \bigO(1)$ as $z \to \infty$
	and this gives the asymptotic condition from the
	\cref{rhp:N}.
\end{proof}

\subsubsection{Local parametrices and final transformation $S \mapsto R$}

In small disks $D_{+}$ and $D_{-}$ around the endpoints $z_+$ and $z_-$ of $\Sigma_0$, we can build local parametrices using Airy functions, see for example
\cite{deift_orthogonal_1999, deift_uniform_1999} or  \cite[Section 2.4.6]{bleher_liechty}.
We will call these $P^{(+)}$ and $P^{(-)}$ respectively.
The local parametrices do not play a role in the strong asymptotics of $q_{n+k,n}$ away from the contour $\Sigma_0$.

We next define the function $R$ by
\begin{align} \label{eq:Rdef}
	R(z) = S(z) \times \begin{cases}
		N(z)^{-1} &\text{ for } z \in \C \setminus (D_{+} \cup D_-), \\
		P^{(+)}(z)^{-1} &\text{ for } z \in D_+, \\
		P^{(-)}(z)^{-1} &\text{ for } z \in D_-.
	\end{cases}
\end{align}
Then the following lemma holds, cf.\  \cite[Section 2.5]{bleher_liechty} or \cite[Lemma 8.3]{kuijlaars_riemann_hilbert_2004}.

\begin{lemma}\label{lem:asymptotics final transformation}
	$R$ is the solution of a small-norm RH problem, which
	has a solution for $n$ large enough. In addition,
	there exists a constant $C > 0$  such that for 
	all $z \in \mathbb C$, 
	\[
	\| R(z) - I_2 \| \leq \frac{C}{n (1+|z|)},
	\]
	where $\| \cdot \|$ denotes any matrix norm. 
\end{lemma}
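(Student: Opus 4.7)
The plan is to verify that $R$ satisfies a Riemann--Hilbert problem whose jump matrix $J_R$ is uniformly close to the identity, and then invoke standard small-norm theory together with a Cauchy integral representation to obtain the pointwise bound. First I would identify the jump contour $\Sigma_R$ of $R$. From the definition \eqref{eq:Rdef} and the jump conditions in \cref{rhp:S,rhp:N}, $R$ inherits the jumps of $S$ outside the disks $D_\pm$, while inside each disk $D_\pm$ the function $R$ equals $S (P^{(\pm)})^{-1}$. The Airy parametrix $P^{(\pm)}$ is constructed precisely so that it has the same jumps as $S$ on $\Sigma_0 \cap D_\pm$ and on the lens arcs inside $D_\pm$; hence those jumps cancel and $R$ extends analytically across them. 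Consequently $\Sigma_R$ reduces to the lens arcs $\gamma_+ \cup \gamma_-$ and the arc $\gamma_0 \setminus \Sigma_0$, both restricted to the exterior of $D_+ \cup D_-$, together with the two small circles $\partial D_\pm$ oriented clockwise.

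Next I would estimate $J_R - I_2$ on each component. On the lens arcs $\gamma_\pm$ outside the disks, $J_R = N_- (I_2 + e^{-2n\phi}E_{21}) N_-^{-1}$, where $E_{21}$ is the standard matrix unit. By the opening-of-lenses construction in \cref{sec:opening of lenses} we have $\Re \phi > 0$ strictly and bounded away from $0$ uniformly on $\gamma_\pm \setminus (D_+\cup D_-)$, while $N$ is uniformly bounded there, so $\|J_R - I_2\| = \bigO(e^{-cn})$ for some $c>0$. On $\gamma_0 \setminus \Sigma_0$ outside the disks, the strict inequality in \eqref{eq:gjumps}, combined with \eqref{eq:phiidentity}, gives $\phi < 0$ strictly, and the same exponential estimate applies. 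On the disk boundaries $\partial D_\pm$, the standard Airy matching property $P^{(\pm)}(z) = N(z)(I_2 + \bigO(n^{-1}))$, established for instance in \cite[Section 2.4.6]{bleher_liechty}, yields $\|J_R - I_2\| = \bigO(n^{-1})$ uniformly on the circles. Combining these estimates gives
\begin{equation*}
\|J_R - I_2\|_{L^\infty(\Sigma_R) \cap L^2(\Sigma_R)} = \bigO(n^{-1}).
\end{equation*}

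With this in hand I would apply the standard small-norm theory of Riemann--Hilbert problems: the singular integral operator $C_- (J_R - I_2)$ acting on $L^2(\Sigma_R)$ has norm $\bigO(n^{-1})$, hence $I - C_- (J_R - I_2)$ is invertible by a Neumann series for $n$ large enough, and $R$ is recovered from the resulting density via the Cauchy transform
\begin{equation*}
R(z) = I_2 + \frac{1}{2\pi i} \int_{\Sigma_R} \frac{R_-(s) (J_R(s) - I_2)}{s - z}\, ds.
\end{equation*}
Since the contour $\Sigma_R$ is compact and stays a fixed positive distance from any point $z$ eventually (for $|z|$ large we use $|s-z| \geq \tfrac12(1+|z|)$), and since $\|R_-(J_R - I_2)\|_{L^1(\Sigma_R)} = \bigO(n^{-1})$, the bound $\|R(z) - I_2\| \leq C/(n(1+|z|))$ follows uniformly in $z \in \mathbb C$.

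None of these steps is a genuine obstacle; the argument is standard in the Deift--Zhou framework. The only point requiring any care is the Airy matching on $\partial D_\pm$, which determines the precise $\bigO(n^{-1})$ rate; this can be taken verbatim from the references \cite{deift_orthogonal_1999, deift_uniform_1999, bleher_liechty, kuijlaars_riemann_hilbert_2004} cited in the paper, since the local behavior of the weight $w_\alpha$ near the endpoints $z_\pm$ satisfies the standard square-root vanishing requirements inherited from $Q_\alpha^{1/2}$.
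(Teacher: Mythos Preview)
Your proposal is correct and follows exactly the standard Deift--Zhou small-norm argument that the paper invokes by citing \cite[Section 2.5]{bleher_liechty} and \cite[Lemma 8.3]{kuijlaars_riemann_hilbert_2004}; the paper gives no independent proof of this lemma beyond those references. Your identification of the residual jump contour, the exponential estimates on the lens arcs and on $\gamma_0\setminus\Sigma_0$, the $\bigO(n^{-1})$ Airy matching on $\partial D_\pm$, and the Cauchy-integral argument for the pointwise decay are all in line with those standard sources.
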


\subsubsection{Proof of \cref{prop:zerosqnn}(a)}
Since $R$ exists for $n$ large enough, we find by tracing
back the transformations $Y \mapsto T \mapsto S \mapsto R$ 
that the \cref{rhp:Y} is solvable
for $n$ large enough. This implies that $q_{n+k,n}$
exists for $n$ large enough.

Let $K$ be a compact subset of $\overline{\C} \setminus \Sigma_0$.
By taking the lense around $\Sigma_0$ and the disks 
around $z_{\pm}$ small enough such that 
$K$ lies outside the region bounded by the lenses and disks, 
we find from \eqref{eq:Tdef}, \eqref{eq:Sdef}, \eqref{eq:Rdef} that
\[ Y(z) = e^{-\frac{n \ell}{2} \sigma_3} R(z) N(z) e^{\frac{n \ell}{2} \sigma_3} e^{n g(z) \sigma_3}, \quad z \in K.
\]
Because $q_{n+k,n}(z) = Y_{11}(z)$ is the $(1,1)$-entry of $Y$
we then obtain  
\begin{align*}
	q_{n+k,n}(z) & = \begin{pmatrix}
	1 & 0
	\end{pmatrix} Y(z) \begin{pmatrix}
	1 \\ 0
	\end{pmatrix} \\ 
	& = 	
	e^{n g(z)} \begin{pmatrix}
		1 & 0
	\end{pmatrix} R(z) N(z) \begin{pmatrix}
		1 \\ 0
	\end{pmatrix} \\
	& = e^{n g(z)} \left( R_{11}(z) N_{11}(z) + R_{12}(z) N_{21}(z) \right),
	\qquad z \in K.
	\end{align*}
	Using \cref{lem:asymptotics final transformation}, we calculate 
	\begin{align*}
	q_{n+k,n}(z) & = e^{ng(z)} \left( N_{11}(z) \left(1+\bigO\left(\frac{1}{n (1+|z|)}\right)\right) +  
		N_{12}(z) \bigO\left(\frac{1}{n(1+|z|)}\right)\right) \\
		& = N_{11}(z) e^{ng(z)}  \left(1+\bigO\left(\frac{1}{n(1+|z|)}\right) \right),
		\qquad z \in K.
	\end{align*}
	with $\bigO$ term that is uniform on $K$.
	The second identity holds since $|N_{11}(z)|$ is bounded away
	from $0$ for $z \in K$.	
	Then part (a) of \cref{prop:zerosqnn} follows since
	the $(1,1)$ entry of $N$ satisfies
	\[ N_{11}(z) = A_0(z) \psi(z)^k \]
	see \eqref{eq:standard solution model RHP} and \eqref{eq:presumed solution of model RHP}.

\subsection{Proof of \cref{prop:zerosqnn}(b)}

Part (b) follows from the strong asymptotic formula
\eqref{eq:qnkasymptotics} in a standard fashion. 
Indeed, \eqref{eq:qnkasymptotics} implies that
\[ \lim_{n \to \infty} \frac{1}{n} \log | q_{n,n}(z)| =
	\Re g(z), \qquad z \in \mathbb C \setminus \Sigma_0  \]
and $\Re g = - U^{\mu_0}$ where 
\[ U^{\mu_0}(z) = \int \log \frac{1}{|z-s|} d\mu_0(s) \]
is the logarithmic potential of $\mu_0$.
Part (b) then follows from the following result that is
well-known, but we state it here for convenience, as we will
also use it in later proofs.

\begin{lemma} \label{lem:weaklimit}
	Suppose $(p_n)_n$ is a sequence of monic polynomials
	with $\deg p_n = n$ whose zeros are all in a bounded subset 
	of the complex plane. Suppose $\mu$ is a probability measure
	with compact support such that
	\begin{equation} \label{eq:logpnlimit} 
		\lim_{n \to \infty} \frac{1}{n} \log |p_n(z)| = -U^{\mu}(z), \qquad
		\text{ a.e.\ on } \mathbb C. \end{equation}
	Then the sequence $(\nu(p_n))_n$ of normalized
	zero counting measures tends to $\mu$ in the weak sense.
\end{lemma}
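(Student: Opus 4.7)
The plan is to recast the hypothesis in terms of logarithmic potentials, use weak compactness to extract a subsequential limit, and then identify that limit as $\mu$ by the unicity principle for potentials. The first observation is that because $p_n$ is monic of degree $n$ with zeros (repeated according to multiplicity) enumerated by $\nu(p_n)$,
\begin{equation*}
\frac{1}{n}\log|p_n(z)| = \int \log|z-s|\,d\nu(p_n)(s) = -U^{\nu(p_n)}(z),
\end{equation*}
so the hypothesis \eqref{eq:logpnlimit} reads $U^{\nu(p_n)}(z) \to U^{\mu}(z)$ for almost every $z \in \mathbb{C}$. Because the zeros of every $p_n$ lie in a common bounded set $K$, the sequence $(\nu(p_n))_n$ is a sequence of probability measures with $\supp \nu(p_n) \subset K$, and is therefore tight and weakly precompact by Helly's selection theorem.

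Next I would argue by subsequences. Let $(\nu(p_{n_j}))_j$ be any weakly convergent subsequence, with weak limit $\nu^*$; then $\nu^*$ is a probability measure supported in $K$. I would invoke the lower envelope theorem of potential theory (see for instance Theorem~I.6.9 in \cite{saff_logarithmic_1997}) which states that for probability measures with uniformly compact support converging weakly to $\nu^*$,
\begin{equation*}
U^{\nu^*}(z) = \liminf_{j \to \infty} U^{\nu(p_{n_j})}(z)
\end{equation*}
quasi-everywhere, and hence for Lebesgue almost every $z$. Combined with the hypothesis this yields $U^{\nu^*}(z) = U^{\mu}(z)$ a.e.

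To conclude I would apply the unicity theorem for logarithmic potentials: since $-\frac{1}{2\pi}\Delta U^{\sigma} = \sigma$ in the distributional sense, and since logarithmic potentials of compactly supported measures lie in $L^1_{\mathrm{loc}}(\mathbb{C})$, two such measures whose potentials agree almost everywhere must coincide. Hence $\nu^* = \mu$. Since every weakly convergent subsequence of the tight sequence $(\nu(p_n))_n$ has the same limit $\mu$, the whole sequence converges weakly to $\mu$. The only delicate ingredient is the lower envelope theorem, which bridges the almost-everywhere convergence of potentials to the identification of the weak limit; the remainder of the argument is a standard compactness/uniqueness scheme and requires no further input.
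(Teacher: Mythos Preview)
Your proof is correct and follows essentially the same route as the paper: compactness of the normalized zero counting measures, extraction of a weakly convergent subsequence, identification of its potential via the Lower Envelope Theorem (the paper cites the same result, Theorem~I.6.9 in \cite{saff_logarithmic_1997}), and then the unicity theorem for logarithmic potentials to conclude that every subsequential limit equals $\mu$. The only cosmetic difference is that the paper cites Theorem~II.2.1 of \cite{saff_logarithmic_1997} for unicity, whereas you phrase it via the distributional Laplacian; these are the same result.
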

\begin{proof}
Let $\nu_n = \nu(p_n)$ the normalized counting measure
of the zeros of $p_n$, see \eqref{eq:zero counting measure}.
The proof that $\mu_0$ is the weak limit of $(\nu_n)_n$ as $n \to \infty$ 
uses  basic tools from logarithmic potential theory. 

The probability measures $\nu_{n}$ are all supported 
on a fixed  compact subset $K$ of $\mathbb C$. The set of
probability measures on $K$ is compact for the weak
topology. Thus, by a standard compactness argument,
it suffices to show that $\mu$ is the only possible
limit of a weakly convergent subsequence.

Suppose $(\nu_{n_j})_j$ is a subsequence of $(\nu_n)_n$
that weakly converges to some probability measure 
$\nu$ on $K$.  Then by the  Lower Envelope Theorem, see \cite[Theorem  I.6.9]{saff_logarithmic_1997} one has
\[ \lim_{j \to \infty}
	U^{\nu_{n_j}}(z) = U^{\nu}(z), \quad
	\text{ for quasi-every } z \in \mathbb C. \]
Here quasi-every $z$ means except for a set of capacity zero,
and in particular it holds almost everywhere with respect
to two-dimensional Lebesgue measure.
Because of \eqref{eq:logpnlimit} 
and 
\[ U^{\nu_{n}}(z) = -\frac{1}{n} \log |p_{n}(z)|\]
we then
find that  $U^{\mu}(z) = U^{\nu}(z)$ a.e.\ on $\mathbb C$.
The uniqueness theorem for logarithmic potentials, see \cite[Theorem II.2.1]{saff_logarithmic_1997} then implies
that $\nu=\mu$. This proves the lemma.  
\end{proof}

The proof of \cref{prop:zerosqnn}(b) clearly also
works for the zeros of $q_{n+k,n}$ as $n \to \infty$ with a fixed $k \in \mathbb Z$.

\subsection{Proof of \cref{thm:zero distribution determinant}}

From \eqref{eq:identity_determinant} and the strong asymptotic formula \eqref{eq:qnkasymptotics} that
we use for $k=0$ and $k=1$, we obtain
\begin{align} \nonumber
	\det P_{n,n}(\zeta^2-\beta^2) & = 
	\frac{1}{2 \zeta} (q_{2n,2n}(\zeta) q_{2n+1,2n}(-\zeta) - q_{2n,2n}(-\zeta) q_{2n+1,2n}(\zeta)) \\
	& = \frac{1}{2\zeta} A_0(\zeta) A_0(-\zeta) 
	e^{2n (g(\zeta) + g(-\zeta))} \nonumber \\
	& \qquad \times  \nonumber
	\left(\psi(-\zeta) 	\left(1+\bigO\left(\frac{1}{n(1+|\zeta|)}\right) \right)
		- \psi(\zeta)	\left(1+\bigO\left(\frac{1}{n(1+|\zeta|)}\right) 
\right) \right)
\\  \label{eq:detPnnasymptotics}
	& = A_0(\zeta) A_0(-\zeta) \frac{\psi(-\zeta) - \psi(\zeta)}{2 \zeta} e^{2n (g(\zeta) + g(-\zeta))} 
	\left(1+\bigO\left(\frac{1}{n(1+|\zeta|)}\right) \right)
	\end{align}
as $n \to \infty$, uniformly for $\zeta$ in compact subsets of 
 $\overline{\C} \setminus (\Sigma_0 \cup -\Sigma_0)$.
 We may indeed combine the two $\bigO$ terms in the last step
 of \eqref{eq:detPnnasymptotics} since
 $\psi(-\zeta)-\psi(\zeta)$  does not  vanish in 
 $\C \setminus (\Sigma_0 \cup -\Sigma_0)$.
 
Thus the zeros of $\zeta \mapsto \det P_{n,n}(\zeta^2-\beta^2)$ tend to 
$\Sigma_0 \cup -\Sigma_0$ as $n \to \infty$, which means
that the zeros of $\det P_{n,n}$ tend to $\widetilde{\Sigma}_0$, see 
\eqref{eq:pushSigma}.

Using \eqref{eq:detPnnasymptotics} we also find
for $\zeta \in \mathbb C \setminus (\Sigma_0 \cup - \Sigma_0)$,
\begin{align*} 
	\lim_{n \to \infty}
		\frac{1}{2n} \log \left|\det P_{n,n}(\zeta^2 - \beta^2)\right|
	& = 
	\Re \left(g(\zeta) + g(-\zeta)\right) \\
	& = \int \log |\zeta-s| d\mu_0(s) + \int \log |\zeta+s| d\mu_0(s) \\
	& = \int \log |\zeta^2 - s^2| d\mu_0(s) 
\end{align*} 
and therefore for $z \in \mathbb C \setminus \widetilde{\Sigma}_0$,
\begin{align*} 
	\lim_{n \to \infty} \frac{1}{2n} \log \left|\det P_{n,n}(z)\right| 
	& = \int \log \left|z- (s^2 - \beta^2)\right| d\mu_0(s) 
	= - U^{\widetilde{\mu}_0}(z),
\end{align*} 
see also \eqref{eq:pushmu0}. Since  $\det P_{n,n}$ is a monic
polynomial of degree $2n$ we can apply \cref{lem:weaklimit} and 
 \cref{thm:zero distribution determinant} follows.

\section{Proof of \cref{thm:zero distribution top right entry}}\label{sec:proofofthm25}

Throughout this section we assume that the
inequality \eqref{eq:inequality g-function} for the
$g$-function holds.
We write
\begin{equation} \label{eq:defQn} 
	Q_n(z) = \frac{q_{n,n}(z) - q_{n,n}(-z)}{2z}, 
	\end{equation}
which is a polynomial of degree
\[ d_n = \deg Q_n \leq n-1. \]
We let $\nu(Q_n)$ be its normalized zero counting measure.

\begin{lemma} \label{lem:limit signed measure is a probability measure} 
\begin{enumerate}
	\item[\rm (a)] 
	The a priori signed measure $\nu$ defined by \eqref{eq:limitnu} 
	is a probability measure  with 
	\begin{align}  \label{eq:Unu} 
	U^{\nu}(z)  = \min \left( U^{\mu_0}(z), U^{\mu_0}(-z) \right)  
	= \begin{cases} U^{\mu_0}(z), & z \in \LHP, \\
	U^{\mu_0^*}(z), & z \in \RHP.
	\end{cases}  \end{align}
	
	\item[\rm (b)] We have
	\begin{equation} \label{eq:logQnlimit} 
		\lim_{n \to \infty} 
		-\frac{1}{n} \log |Q_n(z)| = U^{\nu}(z),
			\qquad z \in \mathbb C \setminus (i\mathbb R 
			\cup \Sigma_0 \cup -\Sigma_0),
			\end{equation}
		where the convergence is uniform in compacts of $\mathbb C \setminus (i\mathbb R 
		\cup \Sigma_0 \cup -\Sigma_0)$.
		
	\item [\rm (c)] The zeros of $Q_n$ tend to $i\mathbb R 
	\cup \Sigma_0 \cup -\Sigma_0$ or to $\infty$ as $n \to \infty$.
\end{enumerate}
\end{lemma}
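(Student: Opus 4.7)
My plan is to treat the three assertions sequentially: part~(a) is the substantial potential-theoretic calculation, part~(b) follows from a dominant-balance argument using the strong asymptotic of $q_{n,n}$, and part~(c) is a direct consequence of~(b). For part~(a), the key observation is that $\mu_0$ is invariant under complex conjugation, since its density $Q_\alpha(s)^{1/2}\,ds$ is induced by a real rational function on the conjugation-symmetric contour $\Sigma_0$. Combined with $\nu_R = \sigma_*(\mu_0|_{\LHP})$ (where $\sigma(z)=-z$), this symmetry yields the identity $\Bal(\mu_0|_{\LHP}) = \Bal(\nu_R)$: both measures live on $i\R$ with mass $\mu_0(\LHP)$, and for $z=iy$ one has $U^{\nu_R}(iy) = U^{\mu_0|_{\LHP}}(-iy) = U^{\mu_0|_{\LHP}}(\overline{iy}) = U^{\mu_0|_{\LHP}}(iy)$ by real symmetry, so the balayages coincide by uniqueness. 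Using this identity together with the standard property that the balayage onto $i\R$ of a measure in one closed half-plane preserves its potential on the other closed half-plane, an algebraic simplification of $U^\nu$ as the sum of the potentials of $\mu_0|_{\LHP}$, $\nu_R$, $\Bal(\mu_0|_{\RHP})$, and $-\Bal(\mu_0|_{\LHP})$ yields
\begin{align*}
U^\nu(z) &= U^{\mu_0}(z) \quad \text{for } z \in \overline{\LHP}, \\
U^\nu(z) &= U^{\mu_0^*}(z) \quad \text{for } z \in \overline{\RHP}.
\end{align*}
Since $\Re g = -U^{\mu_0}$ and $\Re g(-\,\cdot\,) = -U^{\mu_0^*}$, the hypothesis \eqref{eq:inequality g-function} reads $U^{\mu_0^*}(z) < U^{\mu_0}(z)$ on $\RHP$, which proves \eqref{eq:Unu}. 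Because the pointwise minimum of two superharmonic functions is superharmonic, $\nu = -\tfrac{1}{2\pi}\Delta U^\nu$ is a positive Borel measure, and a direct mass count gives $\mu_0(\LHP) + \mu_0(\LHP) + (\mu_0(\RHP) - \mu_0(\LHP)) = 1$, so $\nu$ is a probability measure.

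For part~(b), I substitute the $k=0$ case of the strong asymptotic \eqref{eq:qnkasymptotics} into the definition \eqref{eq:defQn} of $Q_n$:
\begin{equation*}
Q_n(z) = \frac{1}{2z}\left(A_0(z)e^{ng(z)}(1+r_n(z)) - A_0(-z)e^{ng(-z)}(1+r_n(-z))\right),
\end{equation*}
with $r_n(z) = \bigO(1/(n(1+|z|)))$ uniformly on compacts of $\overline{\C}\setminus\Sigma_0$. On compacts of $\RHP$ disjoint from $\Sigma_0 \cup -\Sigma_0$, the hypothesis $\Re g(-z) > \Re g(z)$ makes the second exponential dominate, so $\tfrac{1}{n}\log|Q_n(z)| \to \Re g(-z) = -U^{\mu_0^*}(z)$; the symmetric statement on $\LHP$ follows by applying the hypothesis to $-z$, and both limits agree with $-U^\nu$ from part~(a), yielding \eqref{eq:logQnlimit}. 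For part~(c), $U^\nu$ is finite and continuous on any compact $K \subset \C \setminus (i\R \cup \Sigma_0 \cup -\Sigma_0)$ since $\supp(\nu) \subset i\R \cup \Sigma_0 \cup -\Sigma_0$, so \eqref{eq:logQnlimit} keeps $|Q_n|^{1/n}$ uniformly bounded away from $0$ on $K$ for large $n$, forbidding zeros in $K$; the zeros must therefore cluster on $i\R \cup \Sigma_0 \cup -\Sigma_0$ or escape to $\infty$ (the latter being possible because $d_n \le n-1$ with possibly strict inequality).

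The main technical hurdle is the balayage identity $\Bal(\mu_0|_{\LHP}) = \Bal(\nu_R)$ and the ensuing simplification in part~(a). Without the conjugation symmetry of $\mu_0$ that collapses one balayage into the other, the two singular terms on $i\R$ cannot be handled algebraically, and the positivity of $\nu_0 = \Bal(\mu_0|_{\RHP}) - \Bal(\mu_0|_{\LHP})$ — which is not apparent from its definition alone — would have to be extracted by other means; the min-of-superharmonic argument, once \eqref{eq:Unu} is established, is what delivers positivity for free.
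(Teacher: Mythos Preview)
Your proof is correct. Parts (b) and (c) coincide with the paper's argument.

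For part (a), your route diverges from the paper's at the positivity step. The paper first establishes $\nu_0 \geq 0$ directly: from the inequality $U^{\mu_R + \Bal(\mu_L)} \geq U^{\mu_R^* + \Bal(\mu_L^*)}$, with equality throughout $\overline{\LHP}$ by harmonicity, an application of De La Vall\'ee Poussin's theorem yields the measure inequality $\Bal(\mu_L) \leq \Bal(\mu_L^*) = \Bal(\mu_R)$, hence $\nu_0 \geq 0$; only afterwards is $U^\nu$ computed half-plane by half-plane (using the same balayage identities you use). You instead reverse the order: compute $U^\nu$ first via the balayage identities, recognise it as $\min(U^{\mu_0}, U^{\mu_0^*})$, and infer positivity from the fact that the minimum of two superharmonic functions is superharmonic, so that $\nu = -\tfrac{1}{2\pi}\Delta U^\nu \geq 0$ distributionally. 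Your approach is more elementary in that it bypasses the De La Vall\'ee Poussin theorem entirely; the paper's approach, on the other hand, isolates the explicit inequality $\Bal(\mu_0|_{\LHP}) \leq \Bal(\mu_0|_{\RHP})$, which carries the intuitive content that more of $\mu_0$ sits in the right half-plane. One small remark: to obtain $U^\nu = U^{\mu_0^*}$ on $\overline{\RHP}$ you also need the companion identity $\Bal(\mu_0|_{\RHP}) = \Bal(\mu_0^*|_{\LHP})$ (equivalently, the observation that $\nu$ itself is invariant under $z \mapsto -z$); this follows by the same conjugation-symmetry argument you spell out for $\Bal(\mu_0|_{\LHP}) = \Bal(\nu_R)$, but you do not state it explicitly.
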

\begin{proof}
(a)	The inequality \eqref{eq:inequality g-function} tells us that
	\begin{equation} \label{eq:nupositive1} 
		U^{\mu_0}(z)  > U^{\mu_0}(-z) = U^{\mu_0^*}(z),
	\qquad z \in \RHP \end{equation}
	where $\mu_0^*$ is the reflection of $\mu_0$
	in the imaginary axis. Because of symmetry we have
	the opposite inequality in the left half-plane and
	equality on the imaginary axis. This proves the
	second equality in \eqref{eq:Unu}.  
	
	Write $\mu_0 = \mu_R + \mu_L$ with $\mu_R = \mu_0 |_{\RHP}$, $\mu_L = \mu_0 |_{\LHP}$,
	and similarly $\mu_0^* = \mu^*_R + \mu^*_L$.  
	By the properties 	of balayage, we have
	$U^{\mu_L}(z) = U^{\Bal(\mu_L)}(z)$ and $U^{\mu^*_L}(z)  = U^{\Bal(\mu^*_L)}(z)$ for $z \in \RHP$,
	and then \eqref{eq:nupositive1} gives
	\begin{equation} \label{eq:nupositive2} 
	U^{\mu_R}(z) + U^{\Bal(\mu_L)}(z)  > 
	U^{\mu_R^*}(z) + U^{\Bal(\mu_L^*)}(z),
	\qquad z \in \RHP \end{equation}
	with equality for $\Re z = 0$.
	Both sides of \eqref{eq:nupositive2} are harmonic
	in the left half-plane, and behave like ${- \log|z| + \bigO(z^{-1})}$ as $|z| \to \infty$. Since they are equal on the imaginary
	axis, we obtain equality in the left half-plane
	by the maximum principle for harmonic functions, that is,
	\begin{equation} \label{eq:nupositive3} 	
		U^{\mu_R}(z) + U^{\Bal(\mu_L)}(z)
		 =  U^{\mu_R^*}(z) + U^{ \Bal(\mu^*_L)}(z),
		\quad z \in \overline{\LHP}. 
	\end{equation}	
	
	We next use De La Vall\'ee Poussin's theorem from potential theory which
	is stated in 	\cite[Theorem IV 4.5]{saff_logarithmic_1997}
	for measures with compact supports, and is extended to measures with unbounded support
	in  \cite[Theorem 4.9]{orive_equilibrium_2019}.
	We obtain from \eqref{eq:nupositive2} and
	\eqref{eq:nupositive3} that 
	\[ \left(\mu_R + \Bal(\mu_L) \right) |_{\overline{\LHP}}
	\leq \left(\mu^*_R + \Bal(\mu^*_L) \right) |_{\overline{\LHP}}. \]
	Since $\mu_R$ and $\mu^*_R$ are supported in the
	open right half-plane and the balayage measures
	are on the imaginary axis, we conclude 
	\[ \Bal(\mu_L) \leq \Bal (\mu^*_L)\]
	By symmetry we have $\Bal(\mu_R) = \Bal(\mu^*_L)$
	and we find $\nu_0 = \Bal(\mu_R) - \Bal(\mu_L) \geq 0$.
	Then also $\nu = \mu_L + \mu_R^* + \nu_0 \geq 0$
	is a positive measure.
	
	Furthermore
	\[ U^{\nu} = U^{\mu_L} + U^{\mu_R^*} 
		+ U^{\Bal(\mu_R)} - U^{\Bal(\mu_L)}. \]
	In the right half-plane we have $U^{\Bal(\mu_L)} = U^{\mu_L}$,
	and $U^{\Bal(\mu_R)}  = U^{\Bal(\mu_L^*)} = U^{\mu_L^*}$,
	and we find
	$ U^{\nu} = U^{\mu_R^*} + U^{\mu_L^*} =  U^{\mu_0^*}$ in the right half-plane.
	Similarly $U^{\nu} = U^{\mu_0}$ in the left half-plane
	and the first equality in \eqref{eq:Unu} follows.
	The equality implies that
	\[ U^{\nu}(z) = - \log |z| + \bigO(1) 
		\qquad \text{ as } z \to \infty, \]
	and therefore $\nu$ has total mass one, i.e., 
	it is a probability measure. 
	
\medskip
(b) The asymptotic formula \eqref{eq:qnkasymptotics} implies
that  
\begin{align*}
\lim_{n \to \infty} \frac{1}{n} \log|q_{n,n}(z)|
& = \Re g(z),  \qquad z \in \mathbb C \setminus \Sigma_0, \\
\lim_{n \to\infty} \frac{1}{n} \log |q_{n,n}(-z)| 
& = \Re g(-z),  \qquad z \in \mathbb C \setminus -\Sigma_0,
\end{align*}
where the convergence is uniform in compacts.
The inequality  \eqref{eq:inequality g-function} for the $g$-function then gives us for $z \in \mathbb C \setminus (i\mathbb R \cup \Sigma_0 \cup -\Sigma_0)$,
\begin{align*}
\lim_{n \to \infty} \frac{1}{n} \log|q_{n,n}(z)-q_{n,n}(-z)|
= \max(\Re g(z), \Re g(-z)), 	
\end{align*}
where the convergence is uniform in compacts of $\mathbb C \setminus (i\mathbb R 
\cup \Sigma_0 \cup -\Sigma_0)$.
By \eqref{eq:defQn}, this leads to 
\[ \lim_{n\to\infty} - \frac{1}{n} \log |Q_n(z)|
	= \min \left(U^{\mu_0}(z), U^{\mu_0}(-z) \right),
	\qquad z \in \mathbb C \setminus (i\mathbb R \cup \Sigma_0 \cup -\Sigma_0) \]
	uniformly on compacts.
Then \eqref{eq:logQnlimit} follows because of \eqref{eq:Unu}.

%\medskip
%(c) This also follows from the strong asymptotics \eqref{eq:qnkasymptotics}. 
%Let $K$ be compact set in $\overline{\C} \setminus (-\Sigma_0 \cup i \R \cup \Sigma_0)$ and write $K = K_L \cup K_R$ as the union of two compact sets containing all points to the left and the right of the imaginary axis respectively.
%By the strong asymptotics \eqref{eq:qnkasymptotics} and the inequality \eqref{eq:inequality g-function}, we have for $z \in K_R$
%\begin{align*}
%	Q_n(z) &= \frac{q_{n,n}(z) - q_{n,n}(-z)}{2z} = \frac{A_0(z) e^{n g(z)} - A_0(-z) e^{n g(-z)}}{2 z} \left(1+\bigO\left(\frac{1}{n(1+|z|)}\right)\right) \\
%	&= \frac{A_0(z) e^{n g(z)} - A_0(-z) e^{n g(-z)}}{2 z} \left(1+\bigO\left(\frac{1}{n(1+|z|)}\right)\right)
%\end{align*}

\medskip
(c) Because the convergence of \eqref{eq:logQnlimit} is uniform in compacts and $U^\nu$ is harmonic and hence finite in $\C \setminus (i \R \cup \Sigma_0 \cup -\Sigma_0)$, we find that $Q_n$ has no zeros in fixed compacts of $\C \setminus (i \R \cup \Sigma_0 \cup -\Sigma_0)$ for $n$ large enough.
Consequently, the zeros of $Q_n$ either tend to $i \R \cup \Sigma_0 \cup -\Sigma_0$ or run off to $\infty$.
\end{proof}

\begin{lemma}
\begin{enumerate}
\item[\rm (a)] 
	The first moment of $\mu_0$ is positive, that is, 
	 $m_1 = \int s d\mu_0(s) > 0$.
\item[\rm (b)] If $n$ is odd then $Q_n$ is
a monic polynomial of degree $d_n = n-1$. 
\item[\rm (c)] If $n$ is even and large enough, then $d_n = n-2$,
and the leading coefficient $\rho_n$ of $Q_n$ 
satisfies 
\[ \lim_{n \to \infty} \frac{\rho_n}{n} = - m_1 < 0. \]
\end{enumerate}
\end{lemma}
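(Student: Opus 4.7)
I would prove (a), (b), (c) in that order, since (c) invokes (a). For (a) the plan is to exploit the identity \eqref{eq:gprimeidentity}, expanding both sides as $z\to\infty$ and matching the $1/z^2$-coefficients. From the moment expansion $g(z) = \log z - \sum_{k\geq 1} m_k/(k z^k)$, the left-hand side of $2g'(z) = V_\alpha'(z) + 2Q_\alpha(z)^{1/2}$ is $2/z + 2m_1/z^2 + O(z^{-3})$. A direct expansion of \eqref{eq:Valpha} gives $V_\alpha'(z) = (1+\alpha)/z^2 + O(z^{-3})$, and from \eqref{rationalFunctionQ} with the explicit values of $z_\pm$ in \eqref{simpleZerosRationalFunctionQ}, a short Taylor expansion at infinity yields
$2Q_\alpha(z)^{1/2} = 2/z + 2\bigl(a - (z_++z_-)/2 - (\alpha+\beta)\bigr)/z^2 + O(z^{-3})$, where $a = (1-\sqrt{\alpha})^2/2$. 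Matching the $1/z^2$-terms then gives
\[ m_1 = \frac{(1-\sqrt{\alpha})^2}{2} + \frac{(1+\sqrt{\alpha})^2}{8} = \frac{5 - 6\sqrt{\alpha} + 5 \alpha}{8}, \]
and since the quadratic $5t^2 - 6t + 5$ in $t = \sqrt{\alpha}$ has discriminant $36-100 < 0$, one concludes $m_1 > 0$.

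For (b) I would write $q_{n,n}(z) = \sum_{j=0}^n a_j z^j$ with $a_n = 1$; then $q_{n,n}(z) - q_{n,n}(-z) = 2\sum_{j \text{ odd}} a_j z^j$. For $n$ odd the leading surviving term is $2z^n$, so $Q_n(z) = z^{n-1} + \dots$ is monic of degree $n-1$. For $n$ even the leading surviving term is $2 a_{n-1} z^{n-1}$, so $Q_n$ has degree at most $n-2$ with $z^{n-2}$-coefficient $\rho_n = a_{n-1}$; consequently $d_n = n-2$ precisely when $a_{n-1}\neq 0$.

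For (c) I would extract $a_{n-1}$ from the $1/z$-coefficient of $q_{n,n}(z)/z^n$ as $z \to \infty$, using \eqref{eq:qnkasymptotics} with $k=0$. The cancellation between $a(z)$ and $a(z)^{-1}$ in \eqref{eq:A0z} forces $A_0(z) = 1 + O(z^{-2})$; the moment expansion of $g$ gives $e^{n g(z)}/z^n = 1 - n m_1/z + O(z^{-2})$; and since the error factor $1 + r_n(z)$ with $r_n(z) = O(1/(n(1+|z|)))$ is analytic at $\infty$ and vanishes there, its $1/z$-Laurent coefficient is $O(1/n)$. Combining, $a_{n-1} = -n m_1 + O(1/n)$, so by (a), $\rho_n/n \to -m_1 < 0$, and in particular $a_{n-1} \ne 0$ for all large $n$, giving $d_n = n-2$.

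\textbf{Main obstacle.} The one place where care is needed is in (c), namely reading off the Laurent coefficient $a_{n-1}$ from the asymptotic formula with its $O$-remainder. I would handle this either by using Cauchy's integral formula $a_{n-1} = \frac{1}{2\pi i}\oint_{|z|=R} q_{n,n}(z) z^{-n} dz$ on a large fixed circle in $\overline{\C}\setminus\Sigma_0$, or equivalently by promoting \cref{lem:asymptotics final transformation} to the expansion $R(z) = I_2 + R_1/z + O(z^{-2})$ at infinity with $R_1 = O(1/n)$ and tracing this through the transformations $Y \mapsto T \mapsto S \mapsto R$; this gives $a_{n-1} = (R_1)_{11} - n m_1$ as an exact algebraic identity, from which the claimed limit follows immediately.
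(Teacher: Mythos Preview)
Your proposal is correct, but parts (a) and (c) take genuinely different routes from the paper.

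For (a), you carry out the explicit computation of $m_1$ via the $1/z^2$-expansion of \eqref{eq:gprimeidentity}. The paper mentions that this explicit value $m_1 = \tfrac{5}{8}\alpha - \tfrac{3}{4}\sqrt{\alpha} + \tfrac{5}{8}$ can be computed, but instead gives an argument that deduces $m_1>0$ purely from the standing hypothesis \eqref{eq:inequality g-function}: expanding $g(-z)-g(z)$ at infinity as an odd Laurent series in $1/z$, one sees that if $m_1<0$ then \eqref{eq:inequality g-function} fails for large real $z$, and if $m_1=0$ the first nonvanishing odd moment produces a sign change of $\cos((2k+1)\theta)$ on $(-\tfrac{\pi}{2},\tfrac{\pi}{2})$, again contradicting \eqref{eq:inequality g-function}. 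Your approach is more concrete and, notably, does not rely on the unproved inequality; the paper's approach trades explicitness for an argument that would transfer to other weights satisfying an analogue of \eqref{eq:inequality g-function}.

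For (c), the paper simply observes that $-\rho_n$ equals the sum of the zeros of $q_{n,n}$, i.e.\ $-\rho_n = n\int s\,d\nu(q_{n,n})(s)$, and then invokes the weak convergence $\nu(q_{n,n})\to\mu_0$ from \cref{prop:zerosqnn}(b) (together with uniform boundedness of the zeros) to conclude $-\rho_n/n\to m_1$. Your approach via the large-$z$ expansion of the Riemann--Hilbert solution also works; the identity $a_{n-1} = (R_1)_{11} - n m_1$ you obtain from $Y_{11}=(RN)_{11}e^{ng}$ and $R(z)=I_2+R_1/z+O(z^{-2})$ is exact and gives the same limit with the sharper remainder $O(1/n)$. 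The paper's argument is shorter and avoids unwinding the steepest-descent transformations, while yours extracts more precise information. Your first suggestion for (c), the Cauchy integral on a fixed circle, is less clean because $e^{n(g(z)-\log z)}$ does not have an $n$-uniform Laurent expansion there; the $R$-matrix route you outline is the right way to make it rigorous.

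Part (b) is handled the same way in both.
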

\begin{proof}
(a)	It is possible to explictly compute 
	$m_1 = \frac{5}{8} \alpha - \frac{3}{4} \sqrt{\alpha}
	+ \frac{5}{8}$ and conclude from there that $m_1 > 0$.
		We give another proof that only relies on the assumption
	\eqref{eq:inequality g-function} and thus generalizes
	 to other situations.
	
We note that for $|z|$ big enough, by a Taylor
expansion of the logarithm,
\begin{align*} 
	g(z) & = \int \log(z-s) d\mu_0(s) \\	
	& = \log z + \int \log\left(1-\frac{s}{z} \right) d \mu_0(s) \\
%	& = \log z - \int \sum_{k=1}^{\infty} \frac{s^k}{k z^k}
%		d\mu_0(s) \\
	& = \log z - \sum_{k=1}^{\infty} \frac{m_k}{k z^k}.
	\end{align*}
where $m_k = \int s^k d\mu_0(s)$ for $k =1,2, \ldots$.
Since $\log(-z) = \log z \pm \pi i$, with some
choice of $\pm$ depending on the location of $z$
and the precise branch of the logarithm, we find
\begin{equation} \label{eq:defhz} 
	g(-z) - g(z) - \pm \pi i  = 2 h(z) \qquad \text{ with }
	\quad 
	h(z) = \sum_{k=0}^{\infty} \frac{m_{2k+1}}{2k+1} z^{-2k-1}. 
	\end{equation}
Note that all moments are real by the symmetry of $\mu_0$
with respect to the real axis.
If $m_1$ would be negative, then we find by letting $z = x$ go 
to infinity along the positive real axis that
\[ \Re g(-x) - \Re g(x) = 2 h(x) 
	= 2m_1 x^{-1}  + \bigO(x^{-3}), \]
which is $< 0$ for $x$ big enough, which contradicts our assumption \eqref{eq:inequality g-function}. Thus $m_1 \geq 0$. 

Now assume $m_1 = 0$.
Then take the first $k\geq 1$ with $m_{2k+1} \neq 0$.
Such a $k$ has to exist, since otherwise $h$ is identically
zero, and thus $\Re g(-z) = \Re g(z)$ for every $|z|$
large enough, which contradicts \eqref{eq:inequality g-function}.
Then by \eqref{eq:defhz}, 
\[ \Re g(-z) - \Re g(z) 
	= 2 \frac{m_{2k+1}}{2k+1} \Re (z^{-2k-1}) + \bigO(z^{-2k-3}).
	\quad \text{ as } z \to \infty. \]
Taking $z = r e^{i\theta}$  we get
\[ \Re g(-z) - \Re g(z) 
	= 2 \frac{m_{2k+1}}{2k+1} r^{-2k-1} \cos \left((2k+1) \theta\right)
		+ \bigO(r^{-2k-3}) \quad
		\text{ as } r \to \infty. \]
Because of \eqref{eq:inequality g-function}, we find
$m_{2k+1} \cos\left((2k+1) \theta\right) \geq 0$  for every 
$-\frac{\pi}{2} < \theta < \frac{\pi}{2}$
which is clearly impossible since $m_{2k+1} \neq 0$
and the cosine changes sign on the interval $(-\frac{\pi}{2},
\frac{\pi}{2})$ as $2k+1 \geq 3$.

\medskip
(b)  This is clear from the formula \eqref{eq:defQn}
since $q_{n,n}$ is a monic polynomial of degree $n$.

\medskip
(c) 	
If $n$ is even then the leading terms  of  $q_{n,n}(z)$ and $q_{n,n}(-z)$ cancel when we take their difference as we do in \eqref{eq:defQn} 
and we find that  $Q_n$ is  a polynomial of degree 
$\leq n-2$ whose coefficient for $z^{n-2}$ 
is the second coefficient of $q_{n,n}$, i.e., the coefficient $\rho_n$
in 
\[ q_{n,n}(z) = z^n + \rho_n z^{n-1} + O(z^{n-2}) \text{ as } z \to \infty. \]
Then $-\rho_n$ is equal to the sum of the 
zeros of $q_{n,n}$, which is 
\[ - \rho_n	= n \int s d\nu_n(s)  \]
where $\nu_n = \nu(q_{n,n})$ is the normalized zero counting measure of $q_{n,n}$. 
Since $\nu_n \to \mu_0$ by \cref{prop:zerosqnn} we obtain
that  $-\rho_n/n \to \int s d\mu_0(s) = m_1$ as $n \to \infty$.
Then part (c) of the lemma follows, since $m_1 > 0$ by part (a).
\end{proof}

\begin{proof}[Proof of \cref{thm:zero distribution top right entry}.]
Take an arbitrary  $a \in \mathbb C \setminus (i \mathbb R \cup \Sigma_0 \cup -\Sigma_0)$ that is distinct from any of the zeros of $Q_n$ for $n=1,2, \ldots$. 
By \eqref{eq:logQnlimit} we have that 
\begin{equation} \label{eq:logQna} 
	\lim_{n\to\infty} -\frac{1}{n} \log |Q_n(a)| = U^{\nu}(a).
\end{equation}
Let $T_a$ be the M\"obius transformation
$z \mapsto  T_a(z) = \frac{1}{z-a}$. Then 
\begin{equation} \label{eq:Qntransform} 
		z \mapsto \frac{z^n}{Q_n(a)} Q_n\left(z^{-1} + a\right) 
		\end{equation}
is a monic polynomial of degree $n$ whose zeros are
$T_a(z_{j,n})$, $j=1, \ldots, d_n$ where $z_{j,n}$, $j=1,
\ldots, d_n$ are the zeros of $Q_n$, 
together with a zero of order $n-d_n$ at $z=0$. 
It follows from \cref{lem:limit signed measure is a probability measure}(c) 
that there exists a $\delta > 0$ such that $|z_{j,n} - a| > \delta$ for $j = 1, \ldots, d_n$,
so the set of zeros of the polynomials \eqref{eq:Qntransform} is bounded.

We have by \eqref{eq:logQnlimit} and \eqref{eq:logQna},
whenever $z^{-1} + a \not\in i\mathbb R \cup \Sigma_0 \cup - \Sigma_0$,
\begin{align} \nonumber
	\lim_{n \to \infty}
	- \frac{1}{n} \log \left| \frac{z^n}{Q_n(a)} Q_n\left(z^{-1} + a\right) \right| 
	& = U^{\nu}\left(z^{-1} + a\right) - U^{\nu}(a) - \log |z| \\
	\nonumber
	& = \int \left( \log \frac{1}{|z^{-1}+a-s|} 
		- \log \frac{1}{|a-s|} - \log |z| \right) d\nu(s) \\
		\nonumber
	& = \int \log \frac{1}{|z- T_a(s)|} d\nu(s) \\ \label{eq:logQntransform} 
	& = U^{T_a^* \nu}(z) \end{align}
where $T_a^* \nu$ is the pushforward of $\nu$ under the mapping $T_a$.
From \cref{lem:weaklimit} we then find that $T_a^* \nu$ is 
the weak limit of the normalized zero counting measures of the
polynomials \eqref{eq:Qntransform}, and this means that $\nu$
is the weak limit of the zeros of $Q_n$, as claimed in \cref{thm:zero distribution top right entry}.

Since $(P_n)_{12}(z) = Q_{2n,2n}(\zeta)$ with $\zeta = (z + \beta^2)^{1/2}$ by \eqref{eq:Pnn12} and \eqref{eq:defQn}
we then also find that $\widetilde{\nu}$ is the
limit of the zeros of $(P_n)_{12}$ and the theorem is
proved.
\end{proof}

\section{Proof of \cref{thm:geometrical condition}} \label{sec:proofofthm26}

\begin{figure} \vspace*{-5mm}
	\centering
%	\begin{overpic}[height=5cm,keepaspectratio]{./images/Omega_1_2_colored.png}	
\begin{overpic}[height=5cm,keepaspectratio]{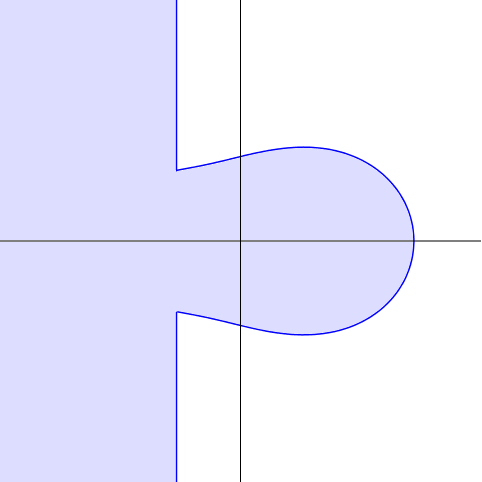}
		\put(8,85){$\Omega_1$}
		\put(90,85){$\Omega_2$}
		\put(38,15){$\Gamma$}
		\put(38,80){$\Gamma$}
		\put(83,35){$\Sigma_0$}
		\put(33,61){$z_+$}
		\put(33,36){$z_-$}
	\end{overpic}
	\hspace{1cm}
%	\begin{overpic}[height=5cm,keepaspectratio]{./images/Omega_1_2_with_-Sigma0_colored.png}	
\begin{overpic}[height=5cm,keepaspectratio]{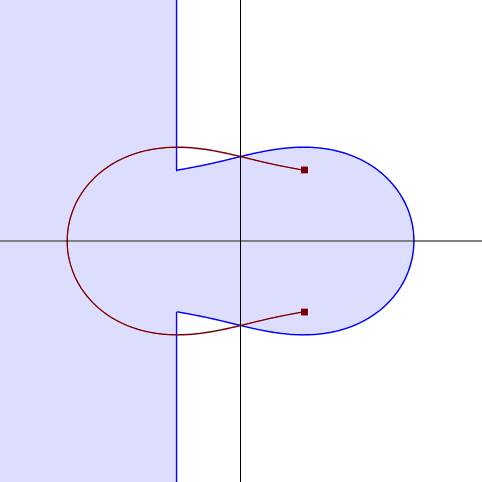}
		\put(8,85){$\Omega_1$}
		\put(90,85){$\Omega_2$}
		\put(11,27){$-\Sigma_0$}
		\put(60,58){$-\zminus$}
		\put(60,37){$-\zplus$}
	\end{overpic}
	\caption{\label{fig:Omega12} The sets $\Omega_1$ (in blue) and $\Omega_2$ as used in the proof of \cref{thm:geometrical condition}. In the right picture, the set $-\Sigma_0$ is plotted in red and we see that indeed $-\Sigma_0 \cap \RHP$ is contained in $\Omega_1$.}
\end{figure}

\subsection{Proof of \cref{thm:geometrical condition}}
For the proof of  \cref{thm:geometrical condition} 
we will work with 
\begin{equation} \label{eq:D0z} 
D_0(z) = ((z-z_+)(z-z_-))^{1/2}, \qquad z \in \mathbb C \setminus \Gamma, \end{equation}
with branch cut along 
\begin{equation} \label{eq:Gamma}
\Gamma = \{ z \in \mathbb C \mid \Re z = \Re z_+, \	|\Im z| \geq \Im z_+ \}  
\end{equation}
and such that $D_0(z)$ is real and positive for $z \in \mathbb R$.
Then $\Gamma \cup \Sigma_{0}$ is a contour that separates
the complex plane into two unbounded domains $\Omega_1$ and $\Omega_2$,
i.e., $\Omega_1 \cup \Omega_2 = \mathbb C \setminus (\Gamma\cup \Sigma_{0})$, where $\Omega_1$ lies to the left and
$\Omega_2$ lies to the right of $\Gamma \cup \Sigma_{0}$,
see left panel of \cref{fig:Omega12}.  
The assumption of \cref{thm:geometrical condition} says
that $-\Sigma_{0} \cap \RHP \subset \Omega_1$ as
shown in the right panel of \cref{fig:Omega12}.

Here we use the fact that $\Sigma_0$  intersects $\Gamma$
only in $z_{\pm}$ which is obvious from the figures.
It will follow from \cref{lem:lemma71} that we state
and prove below after finishing the proof of 
\cref{thm:geometrical condition}.
In fact from part (c) of that lemma we conclude that 
\begin{equation} \label{eq:Rez on Sigma0}
\Sigma_0 \setminus \{z_-,z_+\} \subset \{ z \in \mathbb C \mid \Re z > \Re z_+ \}. 
\end{equation}

Because of \eqref{eq:gprimeidentity} and \eqref{rationalFunctionQ}
we have
\begin{equation} \label{eq:gprimeidentity2}
2 g'(z) = V_{\alpha}'(z)
+ 2 \frac{z+ \frac{(1-\sqrt{\alpha})^2}{2}}{(z-\beta)(z+\beta)(z+\alpha+\beta)}  D(z) \end{equation}
where $D(z)$ is the branch of the square root \eqref{eq:D0z}
that is analytic in $\mathbb C \setminus \Sigma_0$, namely
\begin{align} \label{eq:D0andD}
D(z) = \begin{cases} -D_0(z), & \text{ for } z \in \Omega_1,\\
D_0(z), & \text{ for } z \in \Omega_2.
\end{cases} 
\end{align}

We apply  a partial fraction decomposition to
the rational expression in front of $D(z)$ in \eqref{eq:gprimeidentity2} 
and we combine it with $V_{\alpha}'(z)$ that
we get from \eqref{eq:Valpha} to obtain for $z \in \Omega_1$,
\begin{multline*}
2 g'(z) =  - \frac{1}{D_0(\beta)} \left[ \frac{D_0(z) - D_0(\beta)}{z-\beta} \right]
- \frac{1}{D_0(-\beta)}   \left[ \frac{D_0(z) - D_0(-\beta)}{z+\beta} \right]
\\
+ \frac{2}{D_0(-\alpha-\beta)} \left[ \frac{D_0(z) - D_0(-\alpha-\beta)}{z+\alpha + \beta} \right],
\qquad z \in \Omega_1.
\end{multline*}
Since $g'(z) \to 0$ as $z \to \infty$, we have
$D_0(\beta)^{-1}
+ D_0(-\beta)^{-1} = 2 D_0(-\alpha-\beta)^{-1} $, 
and thus 
\begin{equation} \label{eq:gprime2}
g'(z) = h(z), \qquad z \in \Omega_1,
\end{equation}
with 
\begin{multline} \label{eq:hz}  
h(z) =   -\frac{1}{2 D_0(\beta)} \left[ \frac{D_0(z) - D_0(\beta)}{z-\beta}  -  \frac{D_0(z) - D_0(-\alpha-\beta)}{z+\alpha + \beta} \right] \\
- \frac{1}{2 D_0(-\beta)}   \left[ \frac{D_0(z) - D_0(-\beta)}{z+\beta} -  \frac{D_0(z) - D_0(-\alpha-\beta)}{z+\alpha + \beta} \right],
	\qquad z \in \mathbb C \setminus \Gamma.
\end{multline}
We emphasize that we consider $h(z)$ for $z \in \mathbb C \setminus \Gamma$,
but the identity \eqref{eq:gprime2} only holds for
$z \in \Omega_1$.

The expressions within square brackets in \eqref{eq:hz}
turn out to have positive real parts. This is
a consequence of Lemma \ref{lem:lemma72} that
we state and prove separately below.
Note that $D_0(x) > 0$ for $x \in \mathbb R$,
because of the choice of the square root in \eqref{eq:D0z}. 
Then we conclude from \eqref{eq:hz} that
\begin{equation} \label{eq:Rehz isnegative} 
\Re h(z) < 0, \qquad z \in \mathbb C \setminus \Gamma. \end{equation}

Now take $z_0 \in -\Sigma_0 \cap \RHP$, $z_0 \neq - z_{\pm}$. 
By the geometrical
assumption in \cref{thm:geometrical condition}, we then have that
$z_0 \in \Omega_1$. 
Then $-\overline{z}_0$ belongs to $\Sigma_0 \setminus \{z_\pm\}$, and because of \eqref{eq:Rez on Sigma0} we then have 
\begin{equation} \label{eq:z0 segment} 
[-\overline{z}_0, z_0] \subset \mathbb C \setminus \Gamma. 
\end{equation}
where $[-\overline{z}_0,z_0]$ denotes the horizontal 
line segment from $-\overline{z}_0$ to $z_0$.

If this segment would be fully contained in $\Omega_1$,
then we can conclude from \eqref{eq:gprime2},  
\eqref{eq:z0 segment} and the fundamental theorem of calculus
that 
\begin{equation} \label{eq:gdifference} 
	g(z_0) - g_+(-\overline{z}_0)
	= \int_{[-\overline{z}_0,z_0]} h(s) ds  
	\end{equation}
where we use $g_+(-\overline{z}_0)$ to indicate that we use the $+$ boundary value of $g$ at $-\overline{z}_0 \in \Sigma_0$.
If $[-\overline{z}_0,z_0]$ is not fully contained in $\Omega_1$,
then we can write
\[ g(z_0) - g_+(-\overline{z}_0)
= \int_{\gamma_{-\overline{z}_0,z_0}} h(s) ds  
\] 
where $\gamma_{\overline{z}_0,z_0}$ is any path from
$-\overline{z}_0$ to $z_0$ in $\Omega_1$. 
By Cauchy's theorem and \eqref{eq:z0 segment},
we can then deform to the horizontal line segment, 
since $h$ is analytic in $\mathbb C \setminus \Gamma$.
Thus \eqref{eq:gdifference} holds in all cases.

Taking real parts in \eqref{eq:gdifference},
and noting  \eqref{eq:Rehz isnegative}, we find that
that
\begin{equation} \label{eq:Reg on minSigma0} 
\Re (g(z_0) - g(-\overline{z}_0)) < 0, \qquad z_0 \in -\Sigma_0
	\cap \RHP  \setminus  \{-z_{\pm}\}. \end{equation}
The above argument can be easily adapted to the case $z_0 = - z_{\pm}$ and we also find
\begin{equation} \label{eq:Reg on minzpm} 
\Re (g(z_0) - g(-\overline{z}_0)) < 0, \qquad z_0 = - z_{\pm}. \end{equation}
	
We finally extend the inequality \eqref{eq:Reg on minSigma0},
\eqref{eq:Reg on minzpm}
to the full right-half plane by a subharmonicity argument.
We use that $\Re g(z)$ is harmonic in $\mathbb C \setminus \Sigma_0$ and subharmonic on $\mathbb C$, and 
$\Re g(-z)$ is harmonic in $\mathbb C \setminus -\Sigma_0$.
Also $\Re g(z) = \Re g(\overline{z})$ by symmetry in
the real axis, and it follows that
\[ \Re (g(z) - g(-z)) = \Re(g(z) - g(-\overline{z})) \]
is subharmonic on $\RHP \setminus -\Sigma_0$. It has boundary value $0$ on the imaginary axis, and at infinity.
In addition it is  $< 0$ on $-\Sigma_0 \cap \RHP$ by
\eqref{eq:Reg on minSigma0} and \eqref{eq:Reg on minzpm}.  
 Then the maximum principle for subharmonic functions 
 tells us that 
$\Re (g(z) - g(-z)) < 0$ for $\Re z > 0$,
which concludes the proof of \cref{thm:geometrical condition}
pending the proofs of two lemmas that we will 
turn to next.

\subsection{\cref{lem:lemma71}}

Lemma 6.4 in \cite{charlier2019periodic} provides information
on the critical trajectories of $\widehat{Q}_{\alpha} dz^2$,
that is directly translated into information on $\Sigma_0$
because of the relation \eqref{eq:QvsQtilde}. This is
contained in part (a) of the following lemma.

We assume that $\Sigma_0$ is oriented from $z_-$ to $z_+$.
\begin{lemma} \label{lem:lemma71}
	If we follow $\Sigma_0$ according to its orientation, then
	we have the following.
\begin{enumerate}
\item[\rm (a)] $z \mapsto |z+\alpha+\beta|$ is
	strictly increasing along $\Sigma_0 \cap \mathbb C^-$ 
		and by symmetry strictly decreasing along $\Sigma_0 \cap \mathbb C^+$.
	\item[\rm (b)] $Q_{\alpha}$ is real and positive on the real line,
	and on a smooth contour $L$ contained in the disk
	$|z+\alpha+\beta| \leq \sqrt{\alpha}$, and nowhere else in
	the complex plane.
	\item[\rm (c)]	
	The real part of $z$ is strictly increasing along
	$\Sigma_{0} \cap \mathbb C^-$ and strictly
	decreasing along $\Sigma_{0} \cap \mathbb C^+$.
	\end{enumerate}
\end{lemma}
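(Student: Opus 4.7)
The plan is to derive part (a) directly from \cite[Lemma 6.4]{charlier2019periodic} via the involution in \eqref{eq:QvsQtilde}, and to prove parts (b) and (c) by combining the explicit factorization of $Q_\alpha$ with a transplantation of the trajectory analysis for $\widehat Q_\alpha$ to $Q_\alpha$. For part (a) specifically, the affine involution $\iota(z) = -z-\alpha-\beta$ intertwines the two quadratic differentials and therefore carries critical trajectories to critical trajectories; since $|z+\alpha+\beta| = |\iota(z)|$, the monotonicity of $|\widehat{z}|$ along $\widehat\Sigma_0 \cap \mathbb C^{\pm}$ shown in Charlier translates directly into the monotonicity of $|z+\alpha+\beta|$ along $\Sigma_0$, once orientations are matched (noting that $\iota$ exchanges the upper and lower half-planes).

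For part (b), on the real axis one has $(x-z_+)(x-z_-)=|x-z_+|^2 \geq 0$, while all other factors of $Q_\alpha(x)$ are real squares, so $Q_\alpha(x) \geq 0$ on $\mathbb R$, with equality only at the double zero and the poles. For the off-axis locus $\{Q_\alpha > 0\} \setminus \mathbb R$, I would again apply $\iota$ and reduce to the analogous question for $\widehat Q_\alpha$: the disk $|z+\alpha+\beta| \leq \sqrt\alpha$ corresponds to $|\widehat z| \leq \sqrt\alpha$. The differential $\widehat Q_\alpha(z) dz^2$ has a double zero at $-\sqrt\alpha$ from which four critical trajectories emanate; two of these lie on the real line, and the other two form a closed loop $\widehat L$ contained in $|\widehat z| \leq \sqrt\alpha$, by the trajectory analysis in \cite[Section 4]{charlier2019periodic}. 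Pulling back via $\iota$ yields the desired contour $L$.

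For part (c), I would argue by contradiction. The tangent to a trajectory of $Q_\alpha(z) dz^2$ at a point $z_0$ is purely imaginary exactly when $Q_\alpha(z_0) < 0$, so any failure of strict monotonicity of $\Re z$ along $\Sigma_0 \cap \mathbb C^-$ would produce a point $z_0$ in that lower-half portion with $Q_\alpha(z_0) < 0$. I would then show that $\{Q_\alpha < 0\} \cap \mathbb C^-$ is disjoint from $\Sigma_0 \setminus \{z_\pm\}$: in the lower half-plane the locus $\{Q_\alpha < 0\}$ consists of orthogonal trajectories of $Q_\alpha dz^2$ emanating from the zeros $z_\pm$ and the real double zero, whose global position is once again inherited from the picture for $\widehat Q_\alpha$ via $\iota$.

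The hard part will be part (b), where one has to rule out extra components of the off-axis positive locus of $Q_\alpha$; this requires the global structure of trajectories of $\widehat Q_\alpha(z) dz^2$, and specifically the fact that the two trajectories emanating vertically from the double zero $-\sqrt\alpha$ return to it without leaving the disk $|\widehat z| \leq \sqrt\alpha$. Part (c) is a milder version of the same difficulty, requiring only that one locate the orthogonal trajectories precisely enough to see that none of them crosses the lower-half portion of $\Sigma_0$ away from the endpoints $z_\pm$.
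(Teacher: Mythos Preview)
Your approach to part (a) is correct and matches the paper's.

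Parts (b) and (c), however, rest on a confusion between \emph{level sets of $Q_\alpha$} and \emph{trajectories of the quadratic differential $Q_\alpha\,dz^2$}. The contour $L$ in the statement is the off-real-axis locus where the \emph{value} $Q_\alpha(z)$ is real and positive; this is a level curve of $\Im Q_\alpha$, not a trajectory of $Q_\alpha\,dz^2$. In particular, $L$ does \emph{not} emanate from the double zero: writing $Q_\alpha(z)\approx c\,(z-z_0)^2$ near the real double zero $z_0=-\tfrac{(1-\sqrt\alpha)^2}{2}$, one checks $c>0$, so $Q_\alpha(z_0+it)\approx -ct^2<0$ and the vertical branch through $z_0$ lies in $\{Q_\alpha<0\}$, not $\{Q_\alpha>0\}$. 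The paper instead locates $L$ by a counting argument: $Q_\alpha$ is a degree-six rational function, and inspection of its graph on $\mathbb R$ shows there is a positive local minimum at some $x_{\min}\in(-\alpha-\beta,-\beta)$; values in $[0,Q_\alpha(x_{\min}))$ are taken four times on $\mathbb R$ and hence exactly twice off $\mathbb R$, giving a smooth arc $L$ from $z_-$ through $x_{\min}$ to $z_+$. The containment $L\subset\{|z+\alpha+\beta|\le\sqrt\alpha\}$ is then checked by a tangent-direction argument on the circle $C$, using that $C$ decomposes into $\Sigma_2$ and two orthogonal trajectories meeting at the double zero.

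Part (c) inherits the same confusion, plus a sign issue. In this paper the critical trajectories $\Sigma_j$ satisfy $Q_\alpha\,dz^2<0$ (this is forced by the positivity of $d\mu_j=\tfrac{1}{\pi i}Q_\alpha^{1/2}\,ds$ in \eqref{def:mu}), so a \emph{vertical} tangent on $\Sigma_0$ occurs precisely where $Q_\alpha(z)>0$, not where $Q_\alpha(z)<0$. With the sign corrected, (c) follows immediately from (a) and (b): by (a), $\Sigma_0\setminus\{z_\pm\}$ lies outside the closed disk $|z+\alpha+\beta|\le\sqrt\alpha$, while by (b) the off-axis positive locus $L$ lies inside that disk; hence $Q_\alpha\notin\mathbb R_{>0}$ on $\Sigma_0\setminus(\{z_\pm\}\cup\mathbb R)$, so $\Sigma_0$ has no vertical tangent there, and $\Re z$ is strictly monotone. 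Your proposed analysis of $\{Q_\alpha<0\}$ via ``orthogonal trajectories emanating from the zeros'' is not needed and repeats the level-set/trajectory conflation.
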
 
\begin{proof}
(a) This follows from \cite[Lemma 6.4 (a)]{charlier2019periodic}
	and the mapping $z \mapsto -z - \alpha - \beta$ that
	maps the trajectories of $\widehat{Q}_{\alpha} dz^2$ (that are
	relevant in \cite{charlier2019periodic}) to the trajectories
	of $Q_{\alpha}dz^2$.

\medskip
(b)
From \eqref{rationalFunctionQ} it is clear that $Q_{\alpha}$ is
real and positive on the real line. 
Furthermore, each value $w$ is taken by $Q_{\alpha}$ six times in $\mathbb C$
(counting multiplicities), since $w = Q_{\alpha}(z)$ can 
be written as a degree six polynomial equation for $z$, see
again \eqref{rationalFunctionQ}.
An inspection of the graph of $Q_{\alpha}$ on the real
line shows that $Q_{\alpha}$ has a 	strictly positive local minimum
at a point $x_\text{min} \in (-\alpha-\beta, - \beta)$,
and each value in $[Q_{\alpha}(x_\text{min}), \infty]$
is taken on the real line six times, and so these
values do not appear anywhere else in the complex
plane as function values of $Q_{\alpha}$. Moreover, each value in $[0,Q_{\alpha}(x_\text{min}))$
is taken four times on the real line (counting multiplicities), 
and therefore each of these values is taken exactly two times 
away from the real axis.

From $x_\text{min}$ there is then a smooth contour $L$ into the complex plane, orthogonal to the real line at $x_\text{min}$ along which  $Q_{\alpha}$ is real
and positive, 	and strictly decreasing if we move away from $x_\text{min}$.
The contour $L$ will end at $z_{\pm}$ since these are the two simple zeros
of $Q_{\alpha}$ away from the real line, see \eqref{rationalFunctionQ}.

Let $C$ be the circle $|z+\alpha+\beta| = \sqrt{\alpha}$ of radius
$\sqrt{\alpha}$ around $-\alpha-\beta$.
From $-\alpha - \beta < x_\text{min} < - \beta$ and $\alpha \leq 1$, 
it follows that
$x_\text{min}$ lies inside the circle, and the proof of (b) will be
finished if we can show that $L$ intersects the circle only in 
its endpoints $z_{\pm}$.

Suppose, to get a contradiction, that $L$ intersects $C$ at
a point $\hat{z} \neq z_{\pm}$. Then  $Q_{\alpha}(\hat{z}) > 0$
which implies that the trajectory of the quadratric differential
$Q_{\alpha} dz^2$ that passes through $\hat{z}$ has a 
vertical tangent at $\hat{z}$.  

If $\hat{z} \in \Sigma_2$ (see \cref{fig:zeros40GK}),
then $\Sigma_2$ is this trajectory passing through $\hat{z}$,
and we get a contradiction since $\Sigma_2$ is part of the circle $C$,
and $\Sigma_2$ does not have a vertical tangent at a point $\hat{z} \not\in \mathbb R$.

Thus $\hat{z} \in C \setminus \Sigma_2$. But $C \setminus \Sigma_2$
consists of vertical trajectories of the quadratic differential $Q_{\alpha} dz^2$ from $z_{\pm}$ to the double zero $- \frac{(1-\sqrt{\alpha})^2}{2}$,
which also lies on $C$. Since $Q_{\alpha}(\hat{z}) > 0$ we have
that the vertical trajectory has a horizontal tangent at $\hat{z}$.
A vertical tangent to the circle $|z+\alpha+\beta| = \sqrt{\alpha}$
only happens at the top and bottom points where the real part is $-\alpha-\beta$
but this is not the case at $\hat{z}$, since 
\[ \Re \hat{z} > \Re z_+ = - \frac{(1+\sqrt{\alpha})^2}{8} \]
see \eqref{simpleZerosRationalFunctionQ}, which is $> - \alpha -\beta$.

The contradiction shows that $L$ meets the circle $C$ only in $z_{\pm}$
and part (b) is proved.

\medskip
	
(c) By part (a) we have that $\Sigma_{0}$ is outside of the
closed disk $|z+\alpha+\beta| \leq \sqrt{\alpha}$,
except for the endpoints $z_{\pm}$ which are on the circle $C$.

Then it follows from part (b) that $Q_{\alpha}(z) \in \mathbb C \setminus
\mathbb R^+$ for every $z \in \Sigma_0 \setminus \{ z_{\pm}, x^*\}$
where $x^*$ is the point of intersection of $\Sigma_0$ with the positive
real axis. Then the trajectory $\Sigma_0$ does not have a vertical
tangent at any $z \in  \Sigma_0 \setminus \{z_{\pm}, x^*\}$,
which means that $\Re z $ is strictly increasing along $\Sigma_0 \cap \mathbb C^-$ according to its orientation from $z_-$ to $z_+$, 
and by symmetry strictly decreasing along $\Sigma_0 \cap \mathbb C^+$.
This proves part (c).
\end{proof}

\subsection{\cref{lem:lemma72}}

\begin{lemma} \label{lem:lemma72} For every $x \in \mathbb R$ and
	$z \in \mathbb C \setminus \Gamma$, we have that
	\begin{equation} \label{eq:ReDoineq} \frac{\partial}{\partial x} \Re \left(  \frac{D_0(z) - D_0(x)}{z-x} \right)> 0,
	\end{equation}
\end{lemma}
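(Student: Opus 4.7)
The plan is to find a closed-form expression for $\partial_x \frac{D_0(z) - D_0(x)}{z-x}$ whose real part is manifestly positive. Set $a = \Re z_+$ and $b = \Im z_+ > 0$, so $z_\pm = a \pm ib$ and $D_0(w)^2 = (w-a)^2 + b^2$. The factorization $D_0(z)^2 - D_0(x)^2 = (z-x)(z+x-2a)$ gives the simplification
\[
\frac{D_0(z) - D_0(x)}{z-x} = \frac{z+x-2a}{D_0(z)+D_0(x)}.
\]
Differentiating in $x$ using $D_0'(x) = (x-a)/D_0(x)$ and $D_0(x)^2 - (x-a)^2 = b^2$, I would obtain
\[
\partial_x \frac{D_0(z) - D_0(x)}{z-x} = \frac{D_0(z) D_0(x) - (z-a)(x-a) + b^2}{D_0(x)\,(D_0(z)+D_0(x))^2}.
\]

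The crucial algebraic input is the product identity
\[
\bigl((z-a)^2 + b^2\bigr)\bigl((x-a)^2 + b^2\bigr) = \bigl((z-a)(x-a) + b^2\bigr)^2 + b^2 (z-x)^2,
\]
which after difference-of-squares and rearrangement (using $(z-x)^2 + 2(z-a)(x-a) + 2b^2 = D_0(z)^2 + D_0(x)^2$) yields
\[
D_0(z) D_0(x) - (z-a)(x-a) + b^2 = \frac{b^2 \bigl(D_0(z)+D_0(x)\bigr)^2}{D_0(z) D_0(x) + (z-a)(x-a) + b^2}.
\]
Substituting collapses the derivative to the clean form
\[
\partial_x \frac{D_0(z) - D_0(x)}{z-x} = \frac{b^2}{D_0(x)\,\bigl[D_0(z) D_0(x) + (z-a)(x-a) + b^2\bigr]},
\]
and since $b^2 > 0$ and $D_0(x) > 0$ is real, the lemma reduces to proving $\Re W > 0$ on $\C \setminus \Gamma$, where $W := D_0(z) D_0(x) + (z-a)(x-a) + b^2$.

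Introduce $s_\pm := D_0(z) \pm (z-a)$ and $t_\pm := D_0(x) \pm (x-a)$; then $s_+ s_- = t_+ t_- = b^2$, and a direct expansion gives
\[
2W = \bigl(D_0(z)+D_0(x)\bigr)^2 - (z-x)^2 = (s_- + t_+)(s_+ + t_-) = 2b^2 + s_+ t_+ + s_- t_-.
\]
Since $D_0(x) > |x-a|$, both $t_+$ and $t_-$ are real and strictly positive, so it suffices to show $\Re s_\pm(z) > 0$ on $\C \setminus \Gamma$. The functions $s_\pm$ are analytic and nonzero on the connected set $\C \setminus \Gamma$ (because $s_+ s_- = b^2 \neq 0$) and strictly positive on $\R$, so by continuity it suffices to verify that $\Re s_\pm = 0$ forces $z \in \Gamma$. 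If $s_+(z) = i\tau$ with $\tau \in \R$, then squaring $D_0(z) = i\tau - (z-a)$ and using $D_0(z)^2 = (z-a)^2 + b^2$ gives $z - a = i(b^2+\tau^2)/(2\tau)$; the AM--GM inequality then yields $|\Im(z-a)| = (b^2 + \tau^2)/(2|\tau|) \geq b$, so indeed $z \in \Gamma$. The argument for $s_-$ is identical. It follows that $\Re\bigl[(s_- + t_+)(s_+ + t_-)\bigr] = 2b^2 + t_+ \Re s_+ + t_- \Re s_- > 0$, which proves the lemma. The main obstacle is recognizing the product identity in the second paragraph; once it is in hand, the positivity of the real part reduces to the branch-cut analysis of $\Re s_\pm$, which is then a short computation.
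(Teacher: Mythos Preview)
Your proof is correct and follows a genuinely different route from the paper. The paper normalizes via $w = (z - \Re z_+)/\Im z_+$, $\xi = (x - \Re z_+)/\Im z_+$ and $\Delta(w) = (w^2+1)^{1/2}$, then computes the $\xi$-derivative of $\Re\bigl((\Delta(w)-\Delta(\xi))/(w-\xi)\bigr)$ explicitly on the branch cut $w = \pm i\eta$ with $\eta > 1$, finding it equals a perfect square over a positive quantity; positivity on the cut is then propagated to all of $\C$ by the minimum principle for harmonic functions. You instead derive the closed form $\partial_x(\cdot) = b^2/\bigl(D_0(x)\,W\bigr)$ through the Lagrange-type identity $(u^2+b^2)(v^2+b^2) = (uv+b^2)^2 + b^2(u-v)^2$, and reduce $\Re W > 0$ to the claim $\Re\bigl(D_0(z) \pm (z-a)\bigr) > 0$ on $\C \setminus \Gamma$, which you handle by a short connectedness argument. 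Your approach is entirely algebraic and sidesteps the maximum principle; the paper's is shorter once the boundary computation is in hand, and the perfect-square structure there makes positivity immediate without further factorization. One small presentational point: the division by $W$ in your simplification formally requires $W \neq 0$, which you only establish afterward via $\Re W > b^2$; it would be cleaner to prove $\Re s_\pm > 0$ first (this step is self-contained), deduce $W \neq 0$, and then carry out the simplification.
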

\begin{proof}
	Let $\Delta(w) = (w^2+1)^{1/2}$ with branch cut
	along $(-i \infty,-i] \cup [i, i\infty)$. Then it is easy
	to see from the definition \eqref{eq:D0z} that
	\begin{equation} \label{eq:D0andDelta} 
	\frac{D_0(z) - D_0(x)}{z-x} = \frac{\Delta(w) - \Delta(\xi)}{w-\xi} \end{equation}
	where $w = (z-\Re z_+)/\Im z_+$ and $\xi = (x-\Re z_+)/\Im z_+$.
	
	On its branch cut, $\Delta(w)$ has purely imaginary boundary
	values, namely $\Delta_{\pm}(i\eta) = \pm i \sqrt{\eta^2-1}$
	for $w = i \eta \in \Gamma$ with $\eta > 1$. Then  
	\begin{align*} 
	\frac{\Delta_{\pm} (w) - \Delta(\xi)}{w-\xi}
	& = \frac{\sqrt{\xi^2+1} \mp i \sqrt{\eta^2-1}}{\xi - i \eta} 
	\\
	& = \frac{\left(\sqrt{\xi^2+1} \mp i \sqrt{\eta^2-1}\right)(\xi + i \eta)}{\xi^2 + \eta^2} 
	\end{align*}
	and 
	\begin{align*}  
	\Re \left(
	\frac{\Delta_{\pm} (w) - \Delta(\xi)}{w-\xi}  \right)
	= \frac{\xi \sqrt{\xi^2+1} \pm \eta \sqrt{\eta^2-1}}{\xi^2 + \eta^2}.
	\end{align*}
	Then a little calculation shows that 
	\begin{align*}
	\frac{\partial}{\partial \xi} \Re \left(
	\frac{\Delta_{\pm} (w) - \Delta(\xi)}{w-\xi}  \right)
	= 
	\frac{\left(\eta \sqrt{\xi^2+1} \pm \xi \sqrt{\eta^2-1}\right)^2}{
		\sqrt{\xi^2+1} (\xi^2 + \eta^2)^2}.
	\end{align*}
	Since the numerator is a perfect square we find
	\begin{align} \label{eq:Deltaderiv} 
	\frac{\partial}{\partial \xi} \Re \left(
	\frac{\Delta_{\pm} (w) - \Delta(\xi)}{w-\xi}  \right) > 0
	\end{align}
	for $w = i \eta$, $\eta > 1$ and $\xi \in \mathbb R$.
	By symmetry the same inequality inequality \eqref{eq:Deltaderiv} holds  for $w = -i \eta$, $\eta < 1$
	and $\xi \in \mathbb R$.
	
	It follows that 
	\[ w \mapsto 
	\frac{\partial}{\partial \xi} \Re \left(
	\frac{\Delta_{\pm} (w) - \Delta(\xi)}{w-\xi}  \right) \]
	has positive boundary values on $(-i\infty-i] \cup [i, i\infty)$ and it is harmonic everywhere else in
	the complex plane. By the minimum principle for
	harmonic function, we get that the inequality
	\begin{align} \label{eq:Deltaderiv2} 
	\frac{\partial}{\partial \xi} \Re \left(
	\frac{\Delta(w) - \Delta(\xi)}{w-\xi}  \right) > 0
	\end{align}
	holds for every $w \in \mathbb C$.
	Because of the identity \eqref{eq:D0andD} and
	\eqref{eq:Deltaderiv2} we have
	\begin{align*}
	\frac{\partial}{\partial x} \Re \left(
	\frac{D_0(z) - D_0(x))}{z-x}  \right)  =
	\frac{1}{\Im z_+} 
	\frac{\partial}{\partial \xi} \Re \left(
	\frac{\Delta(w) - \Delta(\xi)}{w-\xi}  \right) > 0
	\end{align*}
	where $w = (z-\Re z_+)/\Im z_+$ and $\xi = (x-\Re z_+)/\Im z_+$,
	and the lemma follows.
\end{proof}

\subsection*{Acknowledgement}
We thank Christophe Charlier for useful comments and for 
allowing us to use the Figure~\ref{fig:low alpha tiling}.

\bibliographystyle{plain}
\bibliography{bibliography}

\end{document}